\title{2-dimensional Shephard groups}
\author{Katherine M. Goldman}
\newtheorem{thm}{Theorem}[section]
\newtheorem{prop}[thm]{Proposition}
\newtheorem{lemma}[thm]{Lemma}
\newtheorem{cor}[thm]{Corollary}
\newtheorem*{thm*}{Theorem}
\newtheorem*{lemma*}{Lemma}
\newtheorem*{cor*}{Corollary}
\theoremstyle{definition}
\newtheorem{defn}[thm]{Definition}
\newtheorem{notat}[thm]{Notation}
\newtheorem{ex}[thm]{Example}
\newtheorem*{ex*}{Example}
\newtheorem*{assumption}{Assumption}
\numberwithin{equation}{section}
\renewcommand{\c}{\mathcal}
\renewcommand{\ker}{\mathrm{ker}}
\newcommand{\im}{\mathrm{im}}
\newcommand{\cat}{\mathrm{CAT}}
\DeclareMathOperator{\lcm}{lcm}
\DeclareMathOperator{\isom}{Isom}
\newcommand{\bb}[2][]{\ensuremath{\mathbb{#2}^{#1}}}
\newcommand{\SL}{\mathrm{SL}}
\newcommand{\PSL}{\mathrm{PSL}}
\newcommand{\sh}{\mathrm{Sh}}
\newcommand{\Hom}{\mathsf{Hom}}
\newcommand{\id}{\mathrm{id}}
\newcommand{\scr}[1]{\mathscr{#1}}
\begin{document}

\begin{abstract}
    The 2-dimensional Shephard groups are quotients of 2-dimensional Artin groups by powers of standard generators. We show that such a quotient is not $\cat(0)$ if the powers taken are sufficiently large. However, for a given 2-dimensional Shephard group, we construct a $\cat(0)$ piecewise Euclidean cell complex with a cocompact action (analogous to the Deligne complex for an Artin group) that allows us to determine other non-positive curvature properties. Namely, we show the 2-dimensional Shephard groups are acylindrically hyperbolic (which was known for 2-dimensional Artin groups), and relatively hyperbolic (which most Artin groups are known not to be). As an application, we show that a broad class of 2-dimensional Artin groups are residually finite.
\end{abstract}

\maketitle 

\section{Introduction}

Shephard groups (named for G.C.~Shephard \cite{shep52}) are specific quotients of Artin groups and include, for example, the Coxeter groups and graph products of cyclic groups as special cases. 
In a previous paper \cite{goldman2023cat0}, the author identified a specific class of Shephard groups which exhibit Coxeter-like behavior, and proved that these are $\cat(0)$. However, the Shephard groups form a very broad class, and because of this, we can find examples which exhibit behavior that diverges quite a bit from the intuition that Coxeter groups and Artin groups provide. The motivating criteria in \cite{goldman2023cat0} can be roughly summarized by saying that the finite parabolic subgroups are identical to the finite parabolic subgroups of the ``associated'' Coxeter group. 
This class is rather inflexible, though; there are only so many finite Shephard groups that are not finite Coxeter groups. 
Our main motivation, then, is to begin the study of Shephard groups which do not satisfy this criteria. That is to say, to study those Shephard groups which possess some infinite parabolic subgroup whose associated Coxeter group is finite. 

It turns out that such groups are quite fascinating even in the 2-generator (or ``dihedral'') case, and possess a rather interesting geometry which deviates from the Artin and Coxeter groups. 
These dihedral groups will be one of the main focuses of the current article. The other main focus are the 2-dimensional Shephard groups. These are the Shephard groups which can be reasonably said to be ``built-up'' out of their dihedral subgroups.  
From our results on 2-generator Shephard groups, we derive interesting information about the geometry of the 2-dimensional Shephard groups, some known for Artin groups, some different than Artin groups, and some which can be used to show new properties of Artin groups.
The full definitions and statements are as follows. 

Let $\Gamma$ be a simplicial graph with vertices $i$ labeled by $p_i \in \bb{Z}_{\geq 2} \cup \{\infty\}$ and edges $\{i,j\}$ labeled by $m_{ij} \in \bb{Z}_{\geq 2}$. If $m_{ij}$ is odd then we require $p_i = p_j$. We call $\Gamma$ an \emph{extended} (or \emph{Shephard}) \emph{presentation graph}. The Shephard group $\sh_\Gamma$ with presentation graph $\Gamma$ is 
\begin{equation*} 
    \sh_\Gamma = \left\langle
    V(\Gamma) \ \middle| \ \begin{matrix}
        \mathrm{prod}(i,j; m_{ij}) = \mathrm{prod}(j,i; m_{ij}) \text{ if } \{i,j\} \in E(\Gamma) \\
        i^{p_i} = 1 \text{ if } p_i < \infty
    \end{matrix}
     \right\rangle,
\end{equation*}
where $\mathrm{prod}(a,b; m)$ denotes $(ab)^{m/2}$ if $m$ is even and $(ab)^{(m-1)/2}a$ if $m$ is odd.
If $\Gamma = \begin{tikzpicture}[baseline=-0.55ex, scale=0.75, every node/.style={scale=0.75}]
        \filldraw (0,0) circle (0.05cm);
        \draw (0,0) -- node[above] {$q$} (1,0);
        \filldraw (1,0) circle (0.05cm);
        \node at (-0.3,0) {$p$};
        \node at (1.3,0) {$r$};
\end{tikzpicture}$ is a single edge, then we will write $\sh_\Gamma = \sh(p,q,r)$ and call this an \emph{edge (Shephard) group} or a \emph{dihedral Shephard group}. 

If $\Gamma$ is an extended presentation graph and $p \in \bb{Z}_{\geq 2} \cup \{\infty\}$, we denote by $\Gamma(p)$ the extended presentation graph obtained from $\Gamma$ by replacing all $p_i$ with $p$.
In particular, for any extended presentation graph $\Gamma$, we define
\begin{align*}
    W_\Gamma &= \sh_{\Gamma(2)} \\
    A_\Gamma &= \sh_{\Gamma(\infty)},
\end{align*}
which are the Coxeter group and Artin group, resp., associated to $\Gamma$. Thus we see that every Shephard group is a quotient of an Artin group by some powers of standard generators. 

\begin{assumption}
     In the rest of the paper, ``extended presentation graph'' will be taken to mean ``extended presentation graph with all $p_i$ finite'' (so the kernel of $A_\Gamma \to \sh_\Gamma$ contains powers of every generator), unless we explicitly mention $\Gamma(\infty)$, and ``Shephard group'' will mean ``Shephard group with all finite-order generators''.
\end{assumption}

We write $\Lambda \leq \Gamma$ if $\Lambda$ is a full subgraph of $\Gamma$ and inherits the edge and vertex labels of $\Gamma$. 
(A full or ``induced'' subgraph $\Lambda$ of $\Gamma$ is one where if $v,w \in V(\Lambda)$ and $\{v,w\} \in E(\Gamma)$, then $\{v,w\} \in E(\Lambda)$.)  
Then $\Lambda$ is also an extended presentation graph and thus determines a Shephard group $\sh_\Lambda$, a Coxeter group $W_\Lambda$, and an Artin group $A_\Lambda$. If $W_\Lambda$ is finite, then we will call each of $\Lambda$, $\sh_\Lambda$, and $A_\Lambda$ ``spherical-type''.
(Similarly, if $W_\Gamma$ is word hyperbolic, then we will call each of $\Gamma$, $\sh_\Gamma$, and $A_\Gamma$ ``hyperbolic-type''.)
A property of an extended presentation graph $\Gamma$ commonly of interest for Artin groups and Coxeter groups is
\begin{enumerate}
    \item[(2D)] For all spherical-type $\Lambda \leq \Gamma$, $|V(\Lambda)| \leq 2$.
\end{enumerate}
If a graph $\Gamma$ satisfies (2D), we will call $\Gamma$ \emph{2-dimensional}. 
Much of the behavior of 2-dimensional Shephard groups can be deduced from their dihedral subgroups. Our first main result concerns solely these dihedral Shephard groups.

\begin{restatable}{theorem}{maindihedral} \label{thm:main2gen}
    Let $(p,q,r)$ be a triple of integers each $\geq 2$, with $p = r$ if $q$ is odd, and let $h = 1/p + 2/q + 1/r$. If $h \leq 1$, then $\sh(p,q,r)$ cannot admit a proper action by semi-simple isometries on any $\cat(0)$ space. In particular, $\sh(p,q,r)$ is not $\cat(0)$. Moreover,
    \begin{enumerate}
        \item If $h = 1$, $\sh(p,q,r)$ is commensurable to the 3-dimensional integer Heisenberg group; in particular, it is virtually nilpotent and not semihyperbolic.
        \item If $h < 1$, $\sh(p,q,r)$ is commensurable to the ``universal central extension'' of a hyperbolic surface group, is a uniform lattice in $\isom(\widetilde{\SL_2\bb{R}})$, and is biautomatic.
    \end{enumerate}
    In particular, $\sh(p,q,r)$ is linear, and if an element of $\sh(p,q,r)$ has finite order, it is conjugate to a power of one of the standard generators.
\end{restatable}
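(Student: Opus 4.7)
The strategy is to exhibit $\sh(p,q,r)$ as a uniform lattice in the isometry group of either $\mathrm{Nil}$ (when $h = 1$) or $\widetilde{\SL_2\bb{R}}$ (when $h < 1$), by realising it as a central cyclic extension of a von Dyck triangle group. Let $a, b$ be the standard generators and set $z = \mathrm{prod}(a,b;q)$. A short manipulation of the braid relation shows that $az = za$ and $bz = zb$ when $q$ is even, so $z$ is central; when $q$ is odd (and $p = r$), conjugation by $z$ instead swaps $a$ and $b$, so $z^2$ is central. Writing $\zeta$ for whichever of $z, z^2$ is central, killing $\zeta$ collapses the braid relation, and the quotient $\sh(p,q,r)/\langle \zeta \rangle$ admits a presentation as a von Dyck triangle group $\Delta^+$ --- of type $(p, r, q/2)$ in the even case, $(p, 2, q)$ in the odd case --- which is Euclidean when $h = 1$ and hyperbolic when $h < 1$. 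Once $\zeta$ is shown to have infinite order, this gives a central extension
\[
1 \longrightarrow \bb{Z} \longrightarrow \sh(p,q,r) \longrightarrow \Delta^+ \longrightarrow 1.
\]

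To prove $\zeta$ has infinite order and to pin down the commensurability type, I will construct an explicit faithful homomorphism with cocompact image, into $\isom(\mathrm{Nil})$ for $h = 1$ and into $\isom(\widetilde{\SL_2\bb{R}})$ for $h < 1$. Concretely, viewing $\Delta^+$ as a cocompact lattice in the orientation-preserving isometry group of its model plane, I lift the two standard vertex rotations of $\Delta^+$ to the relevant universal-cover Lie group and verify that these lifts satisfy the defining relations of $\sh(p,q,r)$, with $\zeta$ mapping to a nontrivial fiber translation. This simultaneously yields linearity; commensurability with the integer Heisenberg group in case~(1); identification with a uniform lattice in $\isom(\widetilde{\SL_2\bb{R}})$ in case~(2); and biautomaticity in case~(2) via standard results for such lattices. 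The torsion classification follows: a finite-order $g \in \sh(p,q,r)$ must project to a torsion element of $\Delta^+$, which in a triangle group is conjugate to a power of a vertex rotation, and the kernel $\langle \zeta\rangle$ being torsion-free forces $g$ itself to be conjugate to a power of a standard generator.

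For the non-$\cat(0)$ statement, suppose $\sh(p,q,r)$ acts properly by semi-simple isometries on a $\cat(0)$ space $X$. The flat torus theorem applied to the infinite cyclic center $\langle \zeta \rangle$ produces a $\sh(p,q,r)$-invariant isometric splitting $X = X_0 \times \bb{R}$, forcing the central extension to split virtually. I expect the main obstacle will be ruling out such a virtual splitting: for $h = 1$ this contradicts the solvable subgroup theorem of Bridson--Haefliger, since the integer Heisenberg group is nilpotent of class $2$ and not virtually abelian; for $h < 1$, a virtual direct product $\bb{Z} \times H$ would force the Euler class of the extension to vanish on a finite-index subgroup of $\Delta^+$, whereas this class is a nonzero multiple of the Euler number of the hyperbolic base orbifold. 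The non-semihyperbolicity in case~(1) is then immediate from Gersten's theorem that the integer Heisenberg group is not semihyperbolic.
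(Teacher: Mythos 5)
Your proposal is correct in outline and shares the paper's architecture: identify the central element $\mathrm{prod}(a,b;q)$ (or its square in the odd case), recognize the central quotient as a Euclidean or hyperbolic von Dyck group, and then combine the Flat Torus Theorem (in the form of \cite[Thm.~II.6.12]{bridson2013metric}) with non-vanishing of the Euler class on finite-index subgroups to rule out proper semi-simple $\cat(0)$ actions, deducing commensurability, linearity, biautomaticity, and the torsion classification from the extension structure. The genuine difference is the engine driving the key step: the paper proves that the Euler class has infinite order by an explicit cohomology computation on a $3$-skeleton of a $K(\Delta,1)$ built from the semiregular tiling (Lemmas \ref{lem:shepinforder}--\ref{lem:shepcentral}), and only afterwards produces the explicit lattice in $\isom(\widetilde{\SL_2\bb{R}})$ (Proposition \ref{prop:unilattice}); you instead front-load the explicit representation, lifting the vertex rotations to $\isom(\mathrm{Nil})$ or $\isom(\widetilde{\SL_2\bb{R}})$ and reading off both the infinite order of $\zeta$ and the commensurability type from the holonomy of the fiber. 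That route works and is arguably more geometric, but note that verifying the relations only yields a homomorphism: to get faithfulness you must also show that $\sh(p,q,r)/\langle\zeta\rangle$ is presented exactly as $\Delta^+$ (a Tietze argument, as in Lemma \ref{lem:shepcentral}) and use Hopficity of $\Delta^+$ to force the kernel into $\langle\zeta\rangle$, where it dies because $\zeta$ maps to a nontrivial translation.

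Two smaller points. First, for $h=1$ the Solvable Subgroup Theorem as stated in Bridson--Haefliger assumes a proper \emph{cocompact} action, which is not available here; either invoke their result that a polycyclic group acting properly by semi-simple isometries is virtually abelian, or simply run your $h<1$ Euler-class argument uniformly (the Heisenberg extension of $\bb{Z}^2$ also has infinite-order Euler class surviving restriction to finite-index subgroups, which is exactly the paper's Proposition \ref{prop:orderofkernel} route). Second, in the torsion classification a finite-order element may project to a rotation about the \emph{third} vertex of the triangle, i.e.\ a power of $\overline{ab}$, which is not a standard generator; this case must be dispatched separately by observing that the full preimage of $\langle\overline{ab}\rangle$ is the infinite cyclic group $\langle ab\rangle$ (since $\zeta$ is a power of $ab$), so such elements are trivial. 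Neither issue is fatal, but both need a sentence in a complete write-up.
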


Not mentioned is the case $1/p + 2/q + 1/r > 1$; this is because if this holds, then $\sh(p,q,r)$ is finite (and is handled in \cite{goldman2023cat0}). In other words, $1/p + 2/q + 1/r \leq 1$ if and only if $\sh(p,q,r)$ is infinite. 
The fact that finite dihedral Shephard groups are linear is shown in \cite{1975regular}; hence, the above Theorem also implies all dihedral Shephard groups are linear.

\begin{ex*}
    The braid group on three strands $\sh(\infty,3,\infty)$ is known to be $\cat(0)$ (e.g., in \cite{brady2010braids}), but its quotient $\sh(p,3,p)$ is not $\cat(0)$ when $p \geq 6$ (when $p < 6$, this quotient is finite).
\end{ex*}

The broadest class of Shephard groups to which Theorem \ref{thm:main2gen} applies are described in the following.

\begin{cor*}
    Let $\Gamma$ be an extended presentation graph (possibly with infinite vertex labels) with an edge $e = \{i,j\}$ such that 
    \begin{enumerate}
        \item the parabolic $\sh_e$ generated by $e$ embeds\footnote{Meaning $\sh_e$ is isomorphic to the subgroup $\langle V(e) \rangle$ of $\sh_\Gamma$ via the map induced by the inclusion $e \hookrightarrow \Gamma$ on generators.} in $\sh_\Gamma$,
        \item the edge and vertex labels of $e$ are finite, and
        \item $\sh_e$ is infinite
    \end{enumerate}
    Then $\sh_\Gamma$ cannot admit a proper action on a $\cat(0)$ space by semi-simple isometries. In particular, $\sh_\Gamma$ is not $\cat(0)$. Moreover, $1/p_i + 2/m_{ij} + 1/p_j = 1$, then $\sh_\Gamma$ is not semihyperbolic.
\end{cor*}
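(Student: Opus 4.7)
The plan is to reduce everything to Theorem \ref{thm:main2gen} applied to the dihedral parabolic $\sh_e$. Since the labels of $e$ are all finite, $\sh_e$ is presented as the dihedral Shephard group $\sh(p_i, m_{ij}, p_j)$. The remark immediately following Theorem \ref{thm:main2gen} notes that such a group is infinite precisely when $1/p_i + 2/m_{ij} + 1/p_j \leq 1$, so assumption (3) places us in the range where Theorem \ref{thm:main2gen} applies.

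Suppose for contradiction that $\sh_\Gamma$ admits a proper action by semi-simple isometries on some $\cat(0)$ space $X$. By assumption (1), $\sh_e$ embeds in $\sh_\Gamma$ as an honest subgroup isomorphic to $\sh(p_i,m_{ij},p_j)$, so restricting the ambient action yields an action of $\sh_e$ on $X$; this restriction is still proper (properness passes to subgroups) and still by semi-simple isometries (every element of $\sh_e$ is literally an element of $\sh_\Gamma$ and therefore acts as a semi-simple isometry on $X$). This contradicts Theorem \ref{thm:main2gen}. The ``in particular'' statement follows immediately: if $\sh_\Gamma$ were $\cat(0)$ it would admit a geometric action on a $\cat(0)$ space, and geometric actions are always by semi-simple isometries (Bridson--Haefliger), so the previous paragraph applies.

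For the final claim, assume $1/p_i + 2/m_{ij} + 1/p_j = 1$. By Theorem \ref{thm:main2gen}(1), $\sh_e$ is commensurable with the 3-dimensional integer Heisenberg group; this is nilpotent of class 2, hence polycyclic but not virtually abelian. Via the embedding of assumption (1), $\sh_\Gamma$ then contains a polycyclic, non-virtually-abelian subgroup. A theorem of Alonso--Bridson asserts that every polycyclic subgroup of a semihyperbolic group is virtually abelian, so $\sh_\Gamma$ cannot be semihyperbolic.

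Essentially the whole argument is formal once Theorem \ref{thm:main2gen} is in hand, so there is no substantial obstacle here; the only thing worth pausing on is to make sure one invokes properties that genuinely descend to subgroups --- ``proper action by semi-simple isometries'' and ``containing a virtually Heisenberg subgroup'' both do, whereas semihyperbolicity does not, which is precisely why one must argue the nilpotent-subgroup contradiction at the level of $\sh_\Gamma$ rather than at the level of $\sh_e$.
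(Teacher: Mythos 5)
Your proof is correct and follows the route the paper intends: the corollary is stated without an explicit proof precisely because, as you observe, ``proper action by semi-simple isometries'' restricts to subgroups, so Theorem~\ref{thm:main2gen} applied to the embedded copy of $\sh_e \cong \sh(p_i,m_{ij},p_j)$ gives the first two claims immediately (with \cite[Prop.~II.6.10(2)]{bridson2013metric} handling the ``in particular''). The one place where you genuinely diverge from the paper is the semihyperbolicity clause, and your version is the more careful one. The paper's stated reason that $\sh_e$ is not semihyperbolic when $h=1$ is that it has cubic Dehn function, and it then asserts without comment that this obstructs semihyperbolicity of any ambient $\sh_\Gamma$; but neither non-semihyperbolicity nor a superquadratic Dehn function passes from a subgroup to an overgroup, so that remark as written needs exactly the supplement you provide. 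Your appeal to the Alonso--Bridson theorem that polycyclic subgroups of semihyperbolic groups are virtually abelian is the right tool, and your closing remark correctly identifies why the argument must be run at the level of $\sh_\Gamma$. The only cosmetic point: since $\sh_e$ is merely \emph{commensurable} to the Heisenberg group, you should pass explicitly to the finite-index subgroup of $\sh_e$ isomorphic to a finite-index subgroup of $H(3)$ (which is nilpotent of class $2$ and not virtually abelian, e.g.\ by comparing growth degrees $3$ versus $4$) before calling the witness subgroup polycyclic; this is a one-line fix and does not affect the argument.
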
 

To rephrase this corollary, an embedded infinite edge subgroup with all labels finite is a ``poison subgroup'' for being $\cat(0)$, or even semihyperbolic in some cases.
It is a conjecture that given any extended presentation graph, its edge parabolics embed, so, conjecturally, (1) is unnecessary and infinite edge groups with finite labels are always poison subgroups.
But, as a consequence of the upcoming Theorem \ref{thm:cocptaction}, the edge groups of a 2-dimensional presentation graph always embed, so we can say

\begin{cor*}
    Let $\Gamma$ be a 2-dimensional extended presentation graph with an edge $e = \{i,j\}$ such that $p_i < \infty$, $p_j < \infty$, $m_{ij} < \infty$, and $\frac{1}{p_i} + \frac{2}{m_{ij}} + \frac{1}{p_j} \leq 1$. Then $\sh_\Gamma$ cannot admit a proper action by semi-simple isometries on a $\cat(0)$ space, and in particular is not $\cat(0)$. Moreover, if $\frac{1}{p_i} + \frac{2}{m_{ij}} + \frac{1}{p_j} = 1$, then $\sh_\Gamma$ is not semihyperbolic.
\end{cor*}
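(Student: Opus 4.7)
The plan is to reduce this corollary directly to its predecessor, so that all that must be shown is that hypothesis~(1) of that corollary---namely the embedding $\sh_e \hookrightarrow \sh_\Gamma$---holds automatically in the 2-dimensional setting. Hypothesis~(2) is precisely what is assumed about the edge $e$, and hypothesis~(3), that $\sh_e$ be infinite, follows from Theorem~\ref{thm:main2gen} together with the remark immediately after it, which identifies $1/p_i + 2/m_{ij} + 1/p_j \leq 1$ as exactly the condition for $\sh(p_i, m_{ij}, p_j)$ to be infinite.

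To establish the embedding, I would appeal to Theorem~\ref{thm:cocptaction}. That theorem produces a cocompact action of $\sh_\Gamma$ on a $\cat(0)$ piecewise Euclidean complex modeled on the Deligne complex, in which the stabilizers of certain cells realize the edge parabolics; from this one reads off that the natural map $\sh_e \to \sh_\Gamma$ is injective for every edge $e$ of $\Gamma$. This is the observation already flagged in the sentence preceding the statement.

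With the embedding in hand, the preceding corollary applies verbatim to our edge $e$: it yields that $\sh_\Gamma$ admits no proper semi-simple action on any $\cat(0)$ space (and \emph{a fortiori} is not $\cat(0)$), and in the equality case $1/p_i + 2/m_{ij} + 1/p_j = 1$ it further yields that $\sh_\Gamma$ is not semihyperbolic.

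The main obstacle is not in this corollary per se but in its two inputs: Theorem~\ref{thm:main2gen}, whose proof must pin down the Heisenberg and $\widetilde{\SL_2 \bb{R}}$-lattice commensurability statements, and Theorem~\ref{thm:cocptaction}, whose proof requires building and verifying the link condition on a Deligne-type complex adapted to the finite vertex orders $p_i$ of a 2-dimensional Shephard group. Given both, the corollary itself is essentially a formal consequence, assembled by citation.
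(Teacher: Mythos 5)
Your proposal is correct and matches the paper's own derivation: the paper states this corollary in the introduction precisely as the combination of the preceding ``poison subgroup'' corollary with the fact, asserted in Theorem \ref{thm:cocptaction}, that edge groups of a 2-dimensional extended presentation graph embed, and you verify the remaining hypotheses (finiteness of labels by assumption, infiniteness of $\sh_e$ via the equivalence $1/p_i + 2/m_{ij} + 1/p_j \leq 1 \iff \sh_e$ infinite) exactly as the paper does. Nothing further is needed.
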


\begin{ex*}
    Suppose $\Gamma$ is an XXL-type presentation graph (meaning all edge labels are $\geq 5$); in particular $\Gamma$ is 2-dimensional. Then $A_\Gamma$ is $\cat(0)$ \cite{haettel2022xxl}. However there is some $N$ such that the quotient $\sh_{\Gamma(p)}$ is not $\cat(0)$ for all $p \geq N$.
\end{ex*}

In light of these examples, Theorem \ref{thm:main2gen} may seem perplexing. 
Perhaps it is less surprising when one considers that the $\cat(0)$ condition requires non-positive curvature at all scales, and we are able to show that (most of) these groups possess other, less restrictive non-positive curvature properties. 

\begin{restatable}{theorem}{cocptaction} \label{thm:cocptaction}
    If $\Gamma$ is any 2-dimensional extended presentation graph, then its edge groups embed, and $\sh_\Gamma$ acts cocompactly on a piecewise Euclidean $\cat(0)$ cell complex with cell stabilizers the conjugates of $\sh_\Lambda$ for spherical-type $\Lambda \leq \Gamma$.
\end{restatable}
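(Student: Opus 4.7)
My plan is to construct $X_\Gamma$ as a Shephard analog of the Charney--Davis modified Deligne complex for 2-dimensional Artin groups, framed via Haefliger's theory of complexes of groups so that the embedding statement emerges automatically from developability.

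\textit{Step 1 (complex of groups).} Let $\mathcal{S}$ be the poset of spherical-type subgraphs $T \leq \Gamma$; under the 2-dimensionality hypothesis these are the empty graph, single vertices, and edges $e = \{i,j\}$ with $m_{ij} < \infty$. Let $K$ be the geometric realization of $\mathcal{S}$, a 2-dimensional simplicial complex whose 2-simplices correspond to flags $\emptyset < \{i\} < \{i,j\}$. Define a complex of groups $\mathcal{G}(\Gamma)$ over $K$ with local group $\sh_T$ at the vertex of type $T$ and the canonical maps $\sh_T \to \sh_{T'}$ as connecting homomorphisms. By comparing presentations, $\pi_1 \mathcal{G}(\Gamma) \cong \sh_\Gamma$.

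\textit{Step 2 (piecewise Euclidean metric).} Equip each 2-simplex with a Euclidean triangle whose angles are $\pi/m_{ij}$ at the type-$e$ corner, $\pi/p_i$ at the type-$\{i\}$ corner, and $\pi - \pi/m_{ij} - \pi/p_i$ at the type-$\emptyset$ corner---the analog of the Moussong/Charney--Davis prescription. This makes the $2m_{ij}$ triangles around each type-$e$ vertex glue into a Euclidean ``Davis chamber'' of $\sh_e$, and fixes a cocompact piecewise Euclidean structure on $K$.

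\textit{Step 3 (link condition).} By Gromov's lemma the development is $\cat(0)$ iff every vertex link has systole $\geq 2\pi$. At type-$e$ vertices the link is a circle of length exactly $2\pi$ by construction. At type-$\{i\}$ and type-$\emptyset$ vertices the link is a bipartite graph whose short cycles correspond to spherical subgraphs of $\Gamma$ touching $i$ (respectively, to short combinatorial loops in $\Gamma$); the 2-dimensionality of $\Gamma$ excludes spherical triangles, and the Step 2 angle choices force each simple cycle to have length at least $2\pi$ by essentially the same computation as in the Coxeter/Artin case.

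\textit{Step 4 (conclusion).} Haefliger's theorem now says a non-positively curved complex of groups is developable, its universal cover $\widetilde K$ is $\cat(0)$, and each local group embeds in $\pi_1 \mathcal{G}(\Gamma)$. Setting $X_\Gamma := \widetilde K$ yields the desired $\cat(0)$ piecewise Euclidean cell complex with a cocompact $\sh_\Gamma$-action whose cell stabilizers are conjugate to $\sh_T$ for spherical $T$, and simultaneously proves $\sh_e \hookrightarrow \sh_\Gamma$ for every spherical edge $e$.

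The principal obstacle is Step 3: the systole estimates at type-$\emptyset$ and type-$\{i\}$ vertices must work uniformly across all parameter regimes, including the degenerate cases where a Step 2 angle would be $\leq 0$ (e.g.\ when $1/p_i + 1/m_{ij} \geq 1$); these likely require a modified chamber shape for the offending 2-cells. I expect to use the classification of finite dihedral Shephard groups \cite{1975regular} together with the structural information supplied by Theorem \ref{thm:main2gen} to calibrate these cases and carry out the case-by-case systole computation.
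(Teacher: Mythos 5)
Your Steps 1 and 4 match the paper's architecture exactly: the paper builds the simple complex of groups $\mathcal G(\sh_\Gamma, K_\Gamma)$ over the realization of the poset of spherical subgraphs and invokes \cite[Thm.~II.12.28]{bridson2013metric} to get developability, a $\cat(0)$ development, and (hence) the embedding of the edge groups. But Step 3 contains a fatal misidentification of where the difficulty lies. The link of a type-$e$ vertex is \emph{not} ``a circle of length exactly $2\pi$ by construction''; it is the coset geometry $\widehat\Theta_e$ of the dihedral Shephard group $\sh_e$ (vertices the cosets of $\langle i\rangle$ and $\langle j\rangle$, edges for nontrivial intersection, each edge of length $\pi/m_{ij}$). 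That picture of a single circle is correct only for the Davis--Moussong complex of the Coxeter group, where the local group at $v_e$ is finite dihedral; here, whenever $1/p_i + 2/m_{ij} + 1/p_j \leq 1$, the group $\sh_e$ is infinite and $\widehat\Theta_e$ is an infinite graph. Proving that this graph has girth at least $2m_{ij}$ (so that the link has systole $\geq 2\pi$) is the entire technical content of the theorem, and your proposal supplies no argument for it. The paper proves it as Proposition \ref{prop:girth} (the syllable length condition): $\sh_e$ is exhibited as a $\bb{Z}$-central extension of (a finite-index subgroup of) a triangle group with Euler class of infinite order; an embedded loop in $\widehat\Theta_e$ projects to the coset geometry of the triangle group, a tiling of $\bb[2]{E}$ or $\bb[2]{H}$ by $m_{ij}$-gons, where any embedded loop has length $\geq m_{ij}$; and the Euler cocycle, which detects the enclosed area, shows the original loop cannot close up after only one such subloop, forcing length $\geq 2m_{ij}$. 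Without this (or some substitute), Step 3 does not go through, and no appeal to the classification of \emph{finite} dihedral Shephard groups will produce it, since the problematic links come precisely from the infinite ones.

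A secondary point: your Step 2 metric is not the one that makes the rest of the argument work, and it creates the spurious ``degenerate angle'' cases you worry about at the end. The paper metrizes each cube $F_{\Lambda_1}\cap F^*_{\Lambda_2}$ as a Coxeter block of the finite \emph{Coxeter} group $W_{\Lambda_2}$, so each 2-simplex $[\varnothing < \{i\} < e]$ is a Euclidean right triangle with angle $\pi/(2m_{ij})$ at $v_e$ and $\pi/2$ at $v_{\{i\}}$, independent of $p_i$. The vertex orders $p_i$ enter only through the combinatorics of the links (e.g.\ $\widehat\Theta_{\{i\}}$ is a set of $p_i$ points joined at distance $\pi/2$, automatically $\cat(1)$), not through the angles, so no modified chamber shapes are needed. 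With that metric the $v_\varnothing$ and $v_{\{i\}}$ links are handled exactly as in the Artin/Coxeter case, and the whole burden falls on the girth of $\widehat\Theta_e$ as described above.
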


This complex is the Shephard group analogue of the \emph{Deligne complex} for Artin groups and the \emph{Davis-Moussong complex} for Coxeter groups. We give the full definition and proof of Theorem \ref{thm:cocptaction} in Section \ref{sec:delignecplx}.
This allows one to adapt techniques used for 2-dimensional Artin groups to show similar properties of 2-dimensional Shephard groups. As an example, the next Theorem is based on the analogous result for 2-dimensional Artin groups \cite{vaskou2022acylindrical}.

\begin{restatable}{theorem}{acylindricallyhyperbolic} \label{thm:acylindricallyhyperbolic}
Suppose $\Gamma$ is an extended presentation graph satisfying
    \begin{enumerate}
        \item $|V(\Gamma)| \geq 3$,
        \item $\Gamma$ is 2-dimensional,
        \item $\sh_\Gamma$ does not split as a direct product (i.e., it is irreducible), and
        \item every connected component of $\Gamma$ has an edge $e$ such that $\sh_e$ is infinite.
    \end{enumerate}
    Then $\sh_\Gamma$ is acylindrically hyperbolic.
\end{restatable}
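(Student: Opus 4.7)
First, I would apply Theorem \ref{thm:cocptaction} to obtain the $\cat(0)$ piecewise Euclidean cell complex $X$ on which $\sh_\Gamma$ acts cocompactly, with cell stabilizers the conjugates of the spherical-type parabolics $\sh_\Lambda$. The 2-dimensionality hypothesis forces $|V(\Lambda)| \leq 2$, so these stabilizers are either cyclic of order $p_i$ or dihedral Shephard groups $\sh(p_i, m_{ij}, p_j)$ with $m_{ij} < \infty$; by Theorem \ref{thm:main2gen} the latter may themselves be infinite. The overall strategy is to mimic Vaskou's argument \cite{vaskou2022acylindrical} for 2-dimensional Artin groups, now in the torsion-laden Shephard setting.

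Next I would observe that hypotheses (1) and (3) together imply $\sh_\Gamma$ is not virtually cyclic, and that irreducibility prevents $\sh_\Gamma$ from globally fixing a point of $\partial X$ (any such fixed point would, in a 2-dimensional $\cat(0)$ complex, force an invariant flat factor inconsistent with irreducibility). Using hypothesis (4), in each connected component of $\Gamma$ pick an edge $e$ with $\sh_e$ infinite; Theorem \ref{thm:main2gen} supplies infinite-order elements of $\sh_e$, each of which acts hyperbolically on $X$ as a semisimple isometry. Combining such an element with a generator from another edge---available because $|V(\Gamma)| \geq 3$ and $\Gamma$ is irreducible---produces an element $g$ whose axis admits no invariant flat strip, so $g$ is a rank-one isometry in the sense of Ballmann.

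The crux is then to verify the weak proper discontinuity (WPD) condition for $g$: for every $\varepsilon > 0$ there is $N$ such that only finitely many $h \in \sh_\Gamma$ move every point of an axial segment of length $N$ by at most $\varepsilon$. This reduces to a parabolic-intersection lemma in the spirit of Charney--Davis: stabilizers of long axial subsegments lie in intersections of cell stabilizers along the axis, and in a 2-dimensional complex these intersections strictly shrink in support, collapsing to stabilizers of smaller spherical-type subgraphs, and eventually to finite subgroups. With WPD established, Osin's criterion (via the Bestvina--Bromberg--Fujiwara projection complex built from the axis of $g$) delivers that $\sh_\Gamma$ is acylindrically hyperbolic.

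The principal obstacle is precisely this WPD / stabilizer-intersection step. Unlike Vaskou's Artin-group setting, the action of $\sh_\Gamma$ on $X$ is not proper---edge stabilizers can already be the infinite dihedral Shephard groups classified in Theorem \ref{thm:main2gen}---and the finite-order standard generators introduce torsion absent in the Artin case, which must be tracked through every intersection argument. Both issues are controlled by 2-dimensionality: it restricts spherical-type parabolics to the well-understood dihedral case, and ensures the intersection lattice of cell stabilizers has small depth, so the Artin-group template adapts with controlled bookkeeping for the cyclic torsion subgroups contributed by the $p_i$.
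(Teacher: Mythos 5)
There is a genuine gap, and it occurs at the very first step of your construction of a rank-one element. You assert that Theorem \ref{thm:main2gen} supplies infinite-order elements of $\sh_e$ ``each of which acts hyperbolically on $X$ as a semisimple isometry.'' This is false: $\sh_e$ is by definition the stabilizer of the vertex $v_e = [\sh_e]$ of $\Theta_\Gamma$, so \emph{every} element of $\sh_e$ fixes $v_e$ and acts elliptically. In particular the infinite cyclic center of $\sh_e$, and indeed all of the infinite group $\sh_e$, is elliptic. Your plan to ``combine'' such an element with a generator from another edge to produce a rank-one axis therefore never gets started, and the subsequent WPD analysis has nothing to attach to. The auxiliary claim that a global fixed point in $\partial X$ would force an invariant flat factor is also unjustified (a group can fix a point at infinity of a $2$-dimensional $\cat(0)$ complex without any flat factor appearing), so irreducibility is not being used in a way that actually rules anything out.

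The paper's proof turns the phenomenon you mis-stated into the engine of the argument. It applies Vaskou's criterion (Proposition \ref{prop:keyacylprop}): one needs a vertex $v$ whose stabilizer $G_v$ has \emph{unbounded orbits on the link} $lk(v,X)$ in the angular metric, and which is \emph{weakly malnormal}. Taking $v = v_e$ for an edge $e$ with $\sh_e$ infinite, the link is $\widehat\Theta_e$, and the unbounded-orbit condition is verified using exactly the ellipticity you overlooked: the central copy of $\bb{Z}$ in $\sh_e$ acts on $\widehat\Theta_e$ as the deck group of the covering $\widehat\Theta_e \to \mathcal D(p,q,r)$ over the tiling of $\bb[2]{E}$ or $\bb[2]{H}$, hence with unbounded orbits (Lemma \ref{lem:part1acyl}). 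Weak malnormality of $\sh_e$ is the adaptation of Vaskou's Lemma 5.7, which requires modifying the auxiliary complex $\Theta^{st}_\Gamma$ by placing $\bb{Z}/p_s\bb{Z} * \bb{Z}/p_t\bb{Z}$ (rather than a free group) on added edges. Finally, the paper disposes of the disconnected case (free product of infinite groups) and the right-angled case (graph product of cyclic groups, via Minasyan--Osin) before invoking the criterion; your proposal addresses neither. If you want to salvage your route, the WPD element must come out of the link dynamics at $v_e$, not out of hyperbolic elements of $\sh_e$, which do not exist for the action on $\Theta_\Gamma$.
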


Past adapting proofs from 2-dimensional Artin groups, the condition that the vertex labels of $\Gamma$ are finite 
allows us to show novel results, that were either unknown or untrue for Artin groups. Our primary example is:

\begin{restatable}{theorem}{relativelyhyperbolic}\label{thm:relativelyhyperbolic}
    Suppose $\Gamma$ is a hyperbolic-type, 2-dimensional extended presentation graph, and let $\c P = \{\, \sh_\Lambda : |V(\Lambda)| = 2,\ W_\Lambda \text{ finite},\ \sh_\Lambda \text{ infinite} \,\}$, the collection of spherical-type edges of $\Gamma$ which give rise to infinite Shephard groups. Then $(\sh_\Gamma, \c P)$ is a relatively hyperbolic group pair. In particular, if every edge group $\sh_\Lambda$ is finite, then $\sh_\Gamma$ is hyperbolic.
\end{restatable}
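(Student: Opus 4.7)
The plan is to produce a Gromov hyperbolic space on which $\sh_\Gamma$ acts geometrically finitely with parabolic stabilizers exactly the conjugates of elements of $\c P$, and then invoke Bowditch's characterization of relative hyperbolicity. The approach is modeled on proofs of relative hyperbolicity for 2-dimensional hyperbolic-type Artin groups with respect to their dihedral parabolics. The starting point is the $\cat(0)$ piecewise Euclidean complex $X$ built in Theorem \ref{thm:cocptaction}. For each edge $e = \{i,j\}$ with $\sh_e$ infinite (so $\sh_e \in \c P$), the group $\sh_e$ acts cocompactly on a convex 2-dimensional subcomplex $X_e \subseteq X$ (the ``edge subcomplex''), which by Theorem \ref{thm:main2gen} is quasi-isometric either to the Heisenberg group (if $h=1$) or to $\widetilde{\SL_2 \bb{R}}$ (if $h<1$).

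First I would verify the ``isolation'' properties of the family $\c F = \{g X_e : g \in \sh_\Gamma,\, e \in E(\Gamma),\, \sh_e \text{ infinite}\}$: each member is convex and quasi-isometrically embedded in $X$; any two distinct members have uniformly bounded coarse intersection; and the full $\sh_\Gamma$-stabilizer of $X_e$ is $\sh_e$. The bounded-intersection property should follow from the 2-dimensionality hypothesis, which forces distinct cosets of dihedral parabolics to meet only along cosets of vertex parabolics $\bb{Z}/p_i$, together with a local link analysis bounding how far two cosets of dihedral parabolics can ``track'' each other. Next I would form the cusped space $\hat X$ by attaching a combinatorial Groves-Manning horoball to each member of $\c F$, and argue $\hat X$ is Gromov hyperbolic.

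Establishing hyperbolicity of $\hat X$ is the main technical obstacle. I would use the hyperbolic-type assumption to transfer the strict curvature bound on the Moussong complex of $W_\Gamma$ (which is $\cat(-1)$ since $W_\Gamma$ is word hyperbolic) to the links of vertices of $X$. The link at a spherical-type vertex of $X$ decomposes into pieces indexed by the edges of $\Gamma$ incident to the corresponding subset $T \subseteq V(\Gamma)$; the pieces coming from edges $e$ with $\sh_e \in \c P$ collapse under coning to a single point, while the remaining pieces (corresponding to finite edge parabolics) behave as in the Moussong complex of $W_\Gamma$. Choosing the piecewise Euclidean metric carefully, the links in the coned-off complex will inherit girth strictly greater than $2\pi$, which by a Moussong-style criterion yields Gromov hyperbolicity of $\hat X$. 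Bowditch's theorem then gives relative hyperbolicity of $(\sh_\Gamma, \c P)$, and the ``in particular'' claim (when $\c P = \emptyset$ no coning occurs, $X$ itself is hyperbolic, and $\sh_\Gamma$ is word hyperbolic) follows immediately.
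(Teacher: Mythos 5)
There is a genuine gap, and it is located at the very foundation of your construction. You assert that for each infinite edge group $\sh_e$ there is a convex 2-dimensional subcomplex $X_e \subseteq \Theta_\Gamma$ on which $\sh_e$ acts cocompactly and which is quasi-isometric to $\sh_e$ (hence to the Heisenberg group or to $\widetilde{\SL_2\bb{R}}$). No such subcomplex can exist: a convex subcomplex of the $\cat(0)$ complex $\Theta_\Gamma$ is itself $\cat(0)$, and a proper cocompact action on it would be by semi-simple isometries, which is exactly what Theorem \ref{thm:main2gen} (via Proposition \ref{prop:infinitenotcat0}) rules out for infinite dihedral Shephard groups. In fact, in $\Theta_\Gamma$ the subgroup $\sh_e$ fixes the vertex $v_e = [\sh_e]$ and acts \emph{elliptically}; its orbits are bounded (they lie on metric spheres about $v_e$), and the complex is not locally finite at $v_e$. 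So there is no quasi-isometrically embedded peripheral coset to attach a Groves--Manning horoball to, and the cusped-space strategy as you describe it cannot get off the ground with this $X$. (Two secondary issues: word hyperbolicity of $W_\Gamma$ does not license the claim that its Moussong complex is $\cat(-1)$ -- Moussong's argument uses the flat plane theorem, not a strict curvature bound; and the links $\widehat\Theta_e$ for infinite $\sh_e$ have girth exactly $2\pi$ in the Moussong metric, not strictly greater, so the ``girth $> 2\pi$ after coning'' step would need a genuine argument.)

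The paper's route avoids all of this by applying Bowditch's fine-graph criterion directly to $Y$, the 1-skeleton of $\Theta_\Gamma$ with the edge-path metric. Hyperbolicity of $\Theta_\Gamma$ follows from the flat plane theorem together with a link analysis at the vertices $[g\sh_\varnothing]$, where an embedded flat would force a closed loop of length exactly $2\pi$ in $lk(v_\varnothing, K)$, possible only when $W_\Gamma$ is not hyperbolic (Lemma \ref{lem:hypimplieshyp}). The peripheral subgroups then appear not as cosets to be coned off but as the infinite vertex stabilizers $g\sh_e g^{-1}$; edge stabilizers are finite and there are finitely many edge orbits by cocompactness. The real content -- and the place where the Shephard hypothesis that all $p_i$ are finite is used, in contrast to the Artin case where only weak relative hyperbolicity holds -- is the verification that $Y$ is fine: the link $\widehat\Theta_e$ is a locally finite graph (valences $p_i$ and $p_j$), and a closest-point projection onto $\partial St(v_e)$ (Lemma \ref{lem:combstays}) converts bounded distance in $Y\setminus v_e$ into bounded distance in $\widehat\Theta_e$, forcing local finiteness of $(V_x(Y), d_x)$. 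Your proposal never engages with fineness or with the elliptic behavior of the peripherals, which are precisely the points where the proof actually lives.
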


The fact that the stabilizers of the $\cat(0)$ cell complex from Theorem \ref{thm:cocptaction} are precisely the edge (and vertex) subgroups indicate that in some sense, the poison subgroups are the only obstruction to being non-positively curved. The above theorem makes this idea more rigorous.

This Theorem is notable, because Artin groups are rarely hyperbolic relative to spherical-type subgroups; see \cite{kapovich2004relative} for a discussion on why this is. However, it was shown in \cite{charney2007relative} that Theorem \ref{thm:relativelyhyperbolic} holds for Artin groups if relative hyperbolicity is replaced with ``weak relative hyperbolicity'' (which we will not define here). Upgrading this to relative hyperbolicity has the following noteworthy consequences:
\begin{restatable}{corollary}{relhypconsequences} \label{cor:relhypconsequences}
    Suppose $\Gamma$ is a hyperbolic-type 2-dimensional extended presentation graph. Then $\sh_\Gamma$
    \begin{enumerate}
        \item has solvable word problem,
        \item satisfies the Tits alternative,
        \item has finite asymptotic dimension, and
        \item has the rapid decay property.
    \end{enumerate}
    If, in addition, there is no edge $\{i,j\}$ of $\Gamma$ with $1/p_i + 2/m_{ij} + 1/p_j = 1$, then $\sh_\Gamma$ is biautomatic.
\end{restatable}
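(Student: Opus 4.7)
The plan is to combine the relative hyperbolicity statement from Theorem~\ref{thm:relativelyhyperbolic} with standard preservation theorems for relatively hyperbolic groups, using Theorem~\ref{thm:main2gen} to supply the relevant property for the peripheral subgroups $\sh_\Lambda \in \c P$. Recall from Theorem~\ref{thm:main2gen} that each such $\sh_\Lambda$ is either commensurable to the integer Heisenberg group (when $1/p + 2/q + 1/r = 1$) or is a uniform lattice in $\isom(\widetilde{\SL_2\bb R})$, and in all cases $\sh_\Lambda$ is linear.

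For items (1)--(4) I would cite the following preservation results and verify the peripheral hypothesis in each case. For (1), solvable word problem is inherited by a group relatively hyperbolic to subgroups with solvable word problem (Farb); virtually nilpotent groups and uniform lattices in Lie groups clearly qualify. For (2), the Tits alternative passes from peripheral subgroups to the relatively hyperbolic group (by work of Groves, or Antol\'in--Minasyan--Sisto), and each peripheral, being linear, satisfies the Tits alternative by Tits' theorem. For (3), finite asymptotic dimension of a relatively hyperbolic group follows from finite asymptotic dimension of the peripherals (Osin), and both virtually nilpotent groups and uniform lattices in connected Lie groups have finite asymptotic dimension. For (4), rapid decay is inherited from the peripherals by Dru\c{t}u--Sapir; virtually nilpotent groups have RD by Jolissaint, and the central $\bb Z$-extensions of surface groups that appear in the $h<1$ case also have RD by Jolissaint's result on central extensions of hyperbolic groups.

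For (5) I would invoke a preservation theorem for biautomaticity under relative hyperbolicity (e.g.\ the work of Antol\'in--Ciobanu, or Rebbechi for the toral case) requiring the peripheral subgroups themselves to be biautomatic. Under the extra hypothesis that no edge of $\Gamma$ satisfies $1/p_i + 2/m_{ij} + 1/p_j = 1$, every $\sh_\Lambda \in \c P$ falls into the $h<1$ case and is therefore biautomatic by Theorem~\ref{thm:main2gen}. I would emphasize that the hypothesis is essentially sharp: in the $h=1$ case $\sh_\Lambda$ is commensurable to the integer Heisenberg group, which is virtually nilpotent but not virtually abelian, and hence is not biautomatic by Gersten's theorem that biautomatic virtually nilpotent groups are virtually abelian.

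The routine steps here are just citation chasing; the main obstacle is identifying a preservation theorem for biautomaticity under relative hyperbolicity that genuinely applies to our peripheral subgroups (which are not themselves hyperbolic, but are lattices in $\widetilde{\SL_2\bb R}$). One should confirm that the available preservation results do not require additional hypotheses on the peripherals, such as being virtually abelian or having a nice coned-off action, that our peripherals fail to satisfy; once this is verified, the corollary follows by combining these black boxes.
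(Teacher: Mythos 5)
Your proposal follows essentially the same route as the paper: combine the relative hyperbolicity of $(\sh_\Gamma,\c P)$ with standard preservation theorems (Farb for the word problem, Tukia-type results for the Tits alternative, Osin for asymptotic dimension, Dru\c{t}u--Sapir for rapid decay, Rebbechi for biautomaticity), verifying each peripheral hypothesis via the structure of the dihedral subgroups from Theorem~\ref{thm:main2gen}. The only differences are cosmetic choices of citation (e.g.\ the paper uses Noskov for rapid decay of central extensions of hyperbolic groups and Tukia's convergence-group theorem for the Tits alternative), so the argument is correct and matches the paper's proof.
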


If we restrict ourselves slightly further, it turns out we also have the following very desirable property for a large subclass of 2-dimensional Shephard groups.

\begin{restatable}{corollary}{residuallyfinite} \label{cor:residuallyfinite}
    Suppose $\Gamma$ is an extended presentation graph with no 3-cycle (or is ``triangle-free'') and no 4-cycle whose edges are each labeled $2$. Then $\sh_\Gamma$ is residually finite.
\end{restatable}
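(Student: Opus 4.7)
The plan is to establish residual finiteness of $\sh_\Gamma$ by cubulating it and applying the relatively hyperbolic version of Agol's virtual specialness theorem.

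First, the hypotheses imply $\Gamma$ is 2-dimensional: a three-vertex spherical-type parabolic would need all three pairs of generators to have finite braid relations, forming a triangle in $\Gamma$. The hypotheses also imply $\Gamma$ is hyperbolic-type, by Moussong's criterion: in 2-dimensional triangle-free presentation graphs, the only Euclidean obstruction to $W_\Gamma$ being word-hyperbolic is a 4-cycle of 2-labels, which is excluded. Hence both Theorem \ref{thm:cocptaction} and Theorem \ref{thm:relativelyhyperbolic} apply: $\sh_\Gamma$ acts cocompactly on a 2-dimensional piecewise Euclidean $\cat(0)$ complex $D$ with cell stabilizers the cyclic vertex groups $\bb{Z}/p_v$ and the dihedral edge groups $\sh_e$, and $(\sh_\Gamma, \c P)$ is relatively hyperbolic with $\c P$ consisting of the infinite dihedral subgroups. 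By Theorem \ref{thm:main2gen} each such $\sh_e$ is linear, hence residually finite by Mal'cev's theorem; the $\bb{Z}/p_v$ are trivially so.

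Next, I would cubulate $\sh_\Gamma$ via a Sageev-style construction. Each dihedral subgroup $\sh_e$, acting on the $2$-sphere, Euclidean plane, or hyperbolic plane according to the trichotomy of Theorem \ref{thm:main2gen}, carries a canonical family of reflection walls. Extending these across $D$ via the edge embeddings of Theorem \ref{thm:cocptaction} gives a $\sh_\Gamma$-equivariant wall system on $D$; the triangle-free hypothesis ensures edge parabolics only glue along the cyclic vertex subgroups so the wall system assembles coherently, while the no-4-cycle-of-2s hypothesis is precisely what excludes $\bb{Z}^2$ parabolics that would force walls to accumulate incompatibly. Sageev dualization then produces a proper cocompact action of $\sh_\Gamma$ on a $\cat(0)$ cube complex $X$ with hyperplane stabilizers relatively quasi-convex in $(\sh_\Gamma, \c P)$.

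Finally, I invoke the Agol--Groves--Manning--Wise relatively hyperbolic refinement of Agol's theorem: since $(\sh_\Gamma, \c P)$ is relatively hyperbolic with residually finite peripherals and $\sh_\Gamma$ acts properly cocompactly on the $\cat(0)$ cube complex $X$ with separable relatively quasi-convex hyperplane stabilizers, $\sh_\Gamma$ is virtually special, hence residually finite by the Haglund--Wise theorem that virtually special groups are residually finite.

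The main obstacle will be the wall-construction step: showing that the local reflection walls in each dihedral edge group extend to a globally coherent wall structure on $D$ that has enough walls to separate points and yields a cocompact action after dualization. This is where both combinatorial hypotheses on $\Gamma$ are used essentially --- triangle-freeness keeps the gluing of edge parabolics at each vertex simple, while the no-4-cycle-of-2s hypothesis is precisely the exclusion of the $\bb{Z}^2$ subgroups that would otherwise prevent the dualized cube complex from being cocompact and hence would obstruct virtual specialness.
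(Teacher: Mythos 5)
Your setup is right---$\Gamma$ is 2-dimensional and hyperbolic-type, Theorems \ref{thm:cocptaction} and \ref{thm:relativelyhyperbolic} apply, and the peripheral dihedral subgroups are residually finite by linearity---but the cubulation step contains a fatal error. You claim that Sageev dualization produces a \emph{proper} cocompact action of $\sh_\Gamma$ on a $\cat(0)$ cube complex $X$. This cannot exist whenever some edge group $\sh_e$ is infinite: such an action would restrict to a proper action of $\sh_e$ on $X$ by semi-simple isometries (all isometries in a cocompact action on a $\cat(0)$ cube complex are semi-simple), and Theorem \ref{thm:main2gen} says precisely that the infinite dihedral Shephard groups admit no such action. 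Indeed, when $1/p + 2/q + 1/r = 1$ the group $\sh(p,q,r)$ is virtually the integer Heisenberg group, and when the sum is $< 1$ it is a lattice in $\isom(\widetilde{\SL_2\bb{R}})$; in both cases the infinite cyclic center obstructs proper semi-simple actions. So the entire Agol--Haglund--Wise route through virtual specialness, which requires a proper cocompact cubulation, is unavailable. (Relatedly, the ``reflection walls'' you posit in $\sh_e$ do not exist: $\sh_e$ acts on $\bb[2]{E}$ or $\bb[2]{H}$ only through its central quotient, a rotation-generated von Dyck group, and the action of $\sh_e$ itself on that plane is far from proper.)

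The correct repair, and what the paper actually does, is to drop properness: the peripherals are allowed to act \emph{elliptically}. No new wall construction is needed. The complex $\Theta_\Gamma$ from Theorem \ref{thm:cocptaction} already has a cubical combinatorial structure (the cells $F_{\Lambda_1} \cap F_{\Lambda_2}^*$ are combinatorial cubes), and when $\Gamma$ is triangle-free---equivalently, 2-dimensional and type FC---re-metrizing each cell as a standard Euclidean cube makes $\Theta_\Gamma$ a $\cat(0)$ cube complex (Lemma \ref{lem:2dfc}); the only nontrivial point is flagness of the links $\widehat\Theta_e$, which follows from the girth bound $\geq 2m_{st} \geq 4$ of Proposition \ref{prop:girth}. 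The action of $\sh_\Gamma$ on this cube complex is \emph{relatively geometric} with respect to $\c P$: cocompact, with each peripheral fixing a vertex and every cell stabilizer finite or conjugate to a peripheral. One then applies the Einstein--Groves criterion (Proposition \ref{prop:residcondition}): a relatively hyperbolic pair acting relatively geometrically on a $\cat(0)$ cube complex with residually finite peripherals is residually finite. Your instinct about which tools are in play is close, but the distinction between proper cubulation and relatively geometric cubulation is exactly the point where these groups differ from, say, hyperbolic-type Artin or Coxeter groups.
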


These are precisely the 2-dimensional graphs $\Gamma$ such that $\Gamma$ is hyperbolic-type and ``type FC'' (the spherical-type subgraphs are exactly the complete subgraphs).

One of the more broadly appealing aspects of Shephard groups with finite vertex labels are their potential to be used to prove results for Artin groups. For example,

\begin{restatable}{theorem}{artinresiduallyfinite} \label{thm:artinresiduallyfinite}
    Suppose $\Gamma$ is a triangle-free presentation graph with no 4-cycle with all edges labeled $2$. Then $A_\Gamma$ is residually finite.
\end{restatable}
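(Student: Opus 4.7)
The plan is to bootstrap from residual finiteness of the Shephard quotients to residual finiteness of $A_\Gamma$: specifically, I would show that the diagonal map
\[
    \Phi \colon A_\Gamma \longrightarrow \prod_{p \geq 2} \sh_{\Gamma(p)}
\]
is injective, where the $p$-th coordinate is the canonical surjection $\phi_p \colon A_\Gamma \twoheadrightarrow \sh_{\Gamma(p)}$. The hypotheses of Theorem \ref{thm:artinresiduallyfinite} depend only on the edges of $\Gamma$ and their labels, so each $\Gamma(p)$ still satisfies them, and Corollary \ref{cor:residuallyfinite} therefore yields that every $\sh_{\Gamma(p)}$ is residually finite. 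Given injectivity of $\Phi$, any nontrivial $g \in A_\Gamma$ has nontrivial image $\phi_p(g)$ for some $p$, and a finite quotient of $\sh_{\Gamma(p)}$ separating $\phi_p(g)$ from the identity lifts via $\phi_p$ to a finite quotient of $A_\Gamma$ separating $g$ from the identity.

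The main step is therefore to show $\bigcap_{p \geq 2} \ker \phi_p = \{1\}$, which I would do by comparing actions on Deligne complexes. Since $\Gamma$ is triangle-free it is in particular FC-type, so the Artin Deligne complex $D_\Gamma$ is $\cat(0)$ (Charney--Davis), and $A_\Gamma$ acts on it cocompactly with stabilizers the spherical-type parabolic subgroups. Theorem \ref{thm:cocptaction} provides, for each $p$, an analogous $\cat(0)$ complex $D_{\Gamma(p)}$ for $\sh_{\Gamma(p)}$. The spherical-type subgraphs of $\Gamma$ and $\Gamma(p)$ coincide (being determined purely by edge labels), so the formula $gA_\Lambda \mapsto \phi_p(g)\sh_\Lambda$ defines a $\phi_p$-equivariant cellular map $\pi_p \colon D_\Gamma \to D_{\Gamma(p)}$. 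If $g$ lies in every $\ker\phi_p$, then $g$ acts trivially on every $D_{\Gamma(p)}$, so for each vertex $hA_\Lambda$ of $D_\Gamma$ and each $p$ we have $g \in hA_\Lambda h^{-1} \cdot \ker \phi_p$.

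The crux --- and what I expect to be the main obstacle --- is to upgrade this to $g \in hA_\Lambda h^{-1}$ via the parabolic-detection statement $\bigcap_{p \geq 2}(A_\Lambda \cdot \ker \phi_p) = A_\Lambda$ for each spherical $\Lambda \leq \Gamma$. For $|V(\Lambda)| \leq 1$ this reduces to the fact that $\bb{Z}$ is residually finite cyclic. The essential case $|V(\Lambda)| = 2$ says that the dihedral Artin group $A_\Lambda$ is residually dihedral Shephard, which I would attack using the explicit linear/geometric models of $\sh(p,m,p)$ supplied by Theorem \ref{thm:main2gen} (finite, virtually nilpotent, or a uniform lattice in $\isom(\widetilde{\SL_2\bb{R}})$ according to the value of $h$), together with linearity of the dihedral Artin group itself. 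Granting parabolic-detection, any $g \in \bigcap_p \ker \phi_p$ stabilizes every vertex of $D_\Gamma$ and thus lies in the intersection of all conjugates of each cyclic vertex group $A_v = \langle v \rangle$; this intersection is the normal core of $A_v$ in $A_\Gamma$, which is trivial, forcing $g = 1$.
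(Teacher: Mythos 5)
Your global strategy coincides with the paper's: reduce residual finiteness of $A_\Gamma$ to (a) residual finiteness of the quotients $\sh_{\Gamma(p)}$, which does follow from Corollary \ref{cor:residuallyfinite} since its hypotheses constrain only the edges and their labels, and (b) the statement that $\bigcap_{p}\ker\phi_p$ is trivial. The paper proves (b) geometrically: for a fixed $g\neq e$ it takes the geodesic in $\Phi_\Gamma$ from $[A_\varnothing]$ to $[gA_\varnothing]$ and shows (Lemma \ref{lem:keyartresid}) that for a suitable single $k$ this geodesic survives in $\Theta_{\Gamma(k)}$, so $\phi_k(g)\neq e$; the engine of that lemma is Lemma \ref{lem:dihedralartinquotient}, which uses product separability of dihedral Artin groups (Corollary \ref{cor:dihedralartinsep}) to separate the coset products $STS\cdots$ encoding geodesics in the dihedral links $\widehat\Phi_\Lambda$.

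Your proposal does not actually contain a proof of (b). The observation that $g\in\bigcap_p\ker\phi_p$ forces $g\in hA_\Lambda h^{-1}\cdot\ker\phi_p$ for every vertex and every $p$ is vacuous --- it holds for any subset containing the identity and uses none of the geometry of the maps $\pi_p$ --- so the entire content sits in the ``parabolic-detection'' claim $\bigcap_p(A_\Lambda\cdot\ker\phi_p)=A_\Lambda$, which you leave unproven. Moreover, your proposed reductions of that claim are not correct: since $A_\Lambda\cdot\ker\phi_p=\phi_p^{-1}\bigl(\phi_p(A_\Lambda)\bigr)$, the claim quantifies over arbitrary $g\in A_\Gamma$ whose images happen to land in the parabolic $\phi_p(A_\Lambda)$ of $\sh_{\Gamma(p)}$ for every $p$; it is a separability statement about the pair $(A_\Gamma, A_\Lambda)$ relative to the family of Shephard quotients, not an intrinsic residual property of $A_\Lambda$. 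In particular it does not reduce to ``$\bb{Z}$ is residually finite cyclic'' when $|V(\Lambda)|\le 1$, nor to ``$A_\Lambda$ is residually dihedral Shephard'' when $|V(\Lambda)|=2$: already for $\Gamma$ a single edge, the rank-one case asks whether an element of the dihedral Artin group whose image lies in $\langle \bar v\rangle$ in every $\sh(p,m,p)$ must be a power of $v$, which is exactly the kind of coset-separability question the paper attacks with product separability and the syllable/geodesic analysis. (The final step, that the normal core of $\langle v\rangle$ in $A_\Gamma$ is trivial, is also asserted without argument, though it is comparatively minor.) To salvage your outline you would need precisely the missing ingredients of the paper's proof: Corollary \ref{cor:dihedralartinsep} together with an argument in the spirit of Lemmas \ref{lem:dihedralartinquotient} and \ref{lem:keyartresid} converting separability of coset products in the dihedral parabolics into detection of $g$ in a single quotient $\sh_{\Gamma(k)}$.
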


Residual finiteness is unknown for most Artin groups, and was previously unknown even for most 2-dimensional Artin groups. For a discussion on residual finiteness of Artin groups, including some past results see \cite{kasia22}.

\subsection{Organization of paper}

In Section \ref{sec:background} we recall the relationship between central extensions and cohomology in preparation for Theorem \ref{thm:main2gen}. 
Then in Section \ref{sec:centext} we complete the proof of this Theorem by detailing the geometry of the dihedral Shephard groups through the perspective of central extensions. In Section \ref{sec:syllable}, we prove the main technical lemma which implies Theorem \ref{thm:cocptaction}, namely that the dihedral Shephard groups satisfy a ``syllable length'' condition similar to that enjoyed by dihedral Artin groups. Following this, we construct the complex from and complete the proof of Theorem \ref{thm:cocptaction} in Section \ref{sec:delignecplx}. The consequences of this complex being $\cat(0)$ are discussed in the remaining sections: Section \ref{sec:acylhyp} deals with acylindrical hyperbolicity, Section \ref{sec:relhyp} deals with relative hyperbolicity and its consequences, and Section \ref{sec:artinresid} deals with residual finiteness of the related Artin groups.

\subsection*{Acknowledgements}

The author would like to thank Mike Davis, Jingyin Huang, and Piotr Przytycki for their valuable discussions and input.
This work is partially supported by NSF grant DMS-2402105.

\section{Central Extensions}
\label{sec:background}

We begin by recalling some background on central extensions of groups and how they relate to cohomology.
Recall that if $A$ is abelian, an ($A$-)central extension of a group $G$ is a group $\tilde G$ fitting into a short exact sequence
\[\begin{tikzcd}[column sep=1.5em]
    0 \arrow[r] & A \arrow[r] & \tilde G \arrow[r] & G \arrow[r] & 0,
\end{tikzcd}\]
with the image of $A$ contained in the center of $\tilde G$. If $\tilde G_1$ and $\tilde G_2$ are $A$-central extensions of $G$, then we say they are ($A$-)\emph{equivalent} if there is an isomorphism $f : \tilde G_1 \to \tilde G_2$ making the diagram
\[\begin{tikzcd}[column sep=1.5em, row sep=0.5em]
     &  & \tilde G_1 \arrow[rd] \arrow[dd, "f"] \\
     0 \arrow[r] & A \arrow[rd]\arrow[ru] & & G \arrow[r] & 0 \\
     & & \tilde G_2 \arrow[ru]  
\end{tikzcd}\]
commute.

\begin{notat}
    For an arbitrary group $G$, we will denote its identity element by $e$. For an abelian group $A$, we will denote its identity element by $0$, and if it is cyclic, we will let $1$ denote a generator. 
\end{notat}

Let $E(G,A)$ denote the set of equivalence classes of $A$-central extensions of $G$ under $A$-equivalence. 
It is well known that there is a natural abelian group structure on $E(G,A)$, under which $E(G,A) \cong H^2(G;A)$ (for example, see \cite[Thm.~3.12]{brown1982cohomology} or \cite[\S 5.9.6]{dructu2018geometric}). 
There is a natural way to view the map $H^2(G;A) \to E(G,A)$.
Fix a presentation $G = \langle\, S \mid R \,\rangle$ and let $Y$ be a $K(G,1)$ such that the 2-skeleton $Y^2$ is the presentation complex for the given presentation of $G$. Let $C_i$ be the free abelian group on the $i$-cells of $Y$. Then in particular, the generators of $C_1$ are $S$ and the generators of $C_2$ are $R$. Let $d_i : C_i \to C_{i-1}$ be the standard cellular boundary map, and define $Z_i = \ker(d_i)$ and $B_i = \im(d_{i+1})$, so that $H_i(G) = Z_i / B_i$. 
We then dualize to obtain $C^i = \Hom(C_i, A)$, $d^i : C^i \to C^{i+1}$ defined by $\varphi \mapsto \varphi \circ d_{i+1}$, $Z^i = \ker(d^i)$, $B^i = \im(d^{i-1})$, and $H^n(G; A) = Z^i/B^i$.  

Now choose a class $[\varphi] \in H^2(G;A) = Z^2/ B^2$ and a representative $\varphi \in Z^2 \subseteq \Hom(C_2, A)$ (or in other words, some morphism $\varphi$ from the free abelian group on $R$ to $A$)%
.
Suppose $A = \langle\, T \mid Q \,\rangle$ is a presentation for $A$. 
Let $\tilde R = \{\,r^{-1}\varphi(r) : r \in R \,\}$ and $C = \{\, [s,t] : s \in S, t \in T \,\}$, subsets of the free group on the disjoint union $S \amalg T$. 
We then define a group 
\begin{equation}
    G_{\varphi} = 
    \langle\, S \amalg T \mid \tilde R \amalg Q \amalg C \,\rangle.
    \label{eq:gpextpres}
\end{equation}
It is straightforward to see that the map $[ \varphi ] \to G_{\varphi}$ is a well-defined injective homomorphism from $H^2(G;A)$ to $E(G,A)$. The construction of an inverse to this map is standard and unnecessary for our purposes, so we omit it, but it may be found in \cite[\S 5.9.6]{dructu2018geometric}.
We will call $[\varphi]$ the \emph{Euler class} of $G_\varphi$, and denote it by $e(G_\varphi) = [\varphi]$.

One standard result is
\begin{prop} \label{prop:trivialsplit}
    The underlying set of $G_\varphi$ is in bijection with $A \times G$, but $G_\varphi$ is isomorphic to $A \times G$ as a group if and only if $[\varphi] = 0$ in $H^2(G;A)$.
\end{prop}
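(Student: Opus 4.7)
The plan is to recognize $G_\varphi$ as a central extension of $G$ by $A$ whose Euler class is $[\varphi]$; both assertions then follow from the correspondence $E(G, A) \cong H^2(G; A)$ recalled above.

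First I would verify that the assignment $s \mapsto s$ for $s \in S$ and $t \mapsto e$ for $t \in T$ descends to a surjective homomorphism $\pi : G_\varphi \to G$: the relations $Q$ and $C$ are killed, and each relation $r^{-1}\varphi(r) \in \tilde R$ maps to $r^{-1}$, which is trivial in $G$ because $r \in R$. The commutators in $C$ make the subgroup $\langle T \rangle \leq G_\varphi$ central, and using $C$ together with $\tilde R$ one reduces every word to a normal form $a \cdot w$ with $a$ a word in $T$ and $w$ a word in $S$. This yields a set-surjection $A \times G \twoheadrightarrow G_\varphi$.

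The main obstacle is showing this set-surjection is actually a bijection, since a priori the image of $A$ in $G_\varphi$ could be a proper quotient. To rule this out I would build an auxiliary central extension $\widehat E$ of $G$ by $A$ with Euler class $[\varphi]$ by the classical 2-cocycle construction: pick a 2-cocycle $c \in Z^2(G; A)$ cohomologous to $\varphi$ (with respect to the presentation 2-complex $Y^2$), put the twisted multiplication $(a_1, g_1)(a_2, g_2) = (a_1 + c(g_1, g_2) + a_2,\ g_1 g_2)$ on the set $A \times G$, and verify that the resulting generators (the image of $A$ together with a set-theoretic lift of $S$) satisfy the defining relations of $G_\varphi$. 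This produces a surjection $G_\varphi \twoheadrightarrow \widehat E$; composing with the tautological bijection $\widehat E \leftrightarrow A \times G$ forces the earlier surjection $A \times G \twoheadrightarrow G_\varphi$ to be a bijection. In particular $A$ embeds in $G_\varphi$, the kernel of $\pi$ is exactly this copy of $A$, and $e(G_\varphi) = [\varphi]$.

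The iff then reduces to the standard vanishing criterion. If $[\varphi] = 0$, choose $\psi \in \Hom(C_1, A)$ with $\varphi = d^1 \psi$; the map $s \mapsto (\psi(s), s)$ for $s \in S$ and $t \mapsto (t, e)$ for $t \in T$ respects all relations --- the cocycle identity $\varphi(r) = \psi(\partial_2 r)$ for $r \in R$ is precisely what is needed to kill $\tilde R$ --- and extends to an isomorphism $G_\varphi \to A \times G$. Conversely, an isomorphism $G_\varphi \to A \times G$ compatible with the inclusion of $A$ and the projection to $G$ (which, in our setup, is automatic since $A$ is characterized as $\ker \pi$) gives a splitting of $\pi$, hence realizes $[\varphi]$ as a coboundary, so $[\varphi] = 0$ in $H^2(G; A)$.
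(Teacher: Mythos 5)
The paper offers no proof of this proposition (it is cited as standard), so your write-up is judged on its own. The first half --- realizing $G_\varphi$ as an $A$-central extension with Euler class $[\varphi]$ via the normal form $a\cdot w$ and the auxiliary cocycle extension $\widehat E$, and the ``if'' direction via $\varphi = d^1\psi$ --- is sound and follows the standard route (the coboundary computation $\varphi(r) = \psi(d_2 r)$ is exactly right, and injectivity of the resulting surjection $G_\varphi \to A\times G$ follows since its kernel meets the central copy of $A$ trivially and hence injects into $G_\varphi/A \cong G$, where it dies).

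The genuine gap is in the converse. Your parenthetical claim that compatibility of an isomorphism $f\colon G_\varphi \to A\times G$ with the inclusion of $A$ and the projection to $G$ is ``automatic since $A$ is characterized as $\ker\pi$'' is not valid: $\ker\pi$ is data attached to the extension, not an isomorphism-invariant subgroup of the abstract group $G_\varphi$, and an arbitrary isomorphism $f$ has no reason to carry the distinguished central copy of $A$ onto $A\times\{e\}$ (already for $G_\varphi = A\times G$ with $G = A$ the coordinate swap fails to do so). What is true, and what the paper actually needs later (in the proof of Proposition \ref{prop:infinitenotcat0}, where the hypothesis is an internal decomposition $\tilde H \cong H'\times A$ over the canonical central copy of $A$), is the statement for the \emph{distinguished} $A$: if the central copy of $A$ is an internal direct factor, $G_\varphi = H'\times A$, then the projection $G_\varphi \to A$ restricts to the identity on $A$, so $\pi|_{H'}\colon H' \to G$ is an isomorphism whose inverse is a section of $\pi$; transporting $\varphi$ along such a set-theoretic-turned-homomorphic section exhibits $\varphi$ as a coboundary, so $[\varphi]=0$. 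You should either prove the proposition under this reading (equivalence of extensions / splitting over the given $A$), or flag that the literal ``abstractly isomorphic'' formulation of the ``only if'' direction is strictly stronger than the standard classification of extensions and does not follow from your argument.
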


We are interested in group-theoretic properties of central extensions induced by cohomological properties of their Euler classes. One first step in this direction is provided by this Lemma:

\begin{lemma} \label{lem:multoncoimpliesfinind}
    Suppose $\tilde G$ and $\tilde H$ are $A$-central extensions of a group $G$ such that $e(\tilde H) = n e(\tilde G)$, where $n$ is not a zero divisor of the \bb{Z}-module $A$. Then $\tilde H$ is isomorphic to a subgroup of $\tilde G$ with index $[A : nA]$. 
\end{lemma}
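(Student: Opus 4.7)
The plan is to realize $\tilde H$ as a finite-index subgroup of $\tilde G$ by pulling $\tilde G$ back along an injective endomorphism of $G$.

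First I would fix a cocycle representative: pick a 2-cocycle $\varphi$ with $[\varphi] = e(\tilde G)$, so that (\ref{eq:gpextpres}) presents $\tilde G$ with relations $r = \varphi(r)$ for each $r \in R$. Since $e(\tilde H) = n[\varphi] = [n\varphi]$, I may represent $\tilde H$ by the cocycle $n\varphi$, giving an identical presentation except with $r = n\varphi(r)$. A naive set-level candidate $\tilde H \to \tilde G$, $(a,g)\mapsto(na,g)$, fails to be a homomorphism: a direct check shows the two multiplications differ by an $(n^2-1)\varphi$ term that need not be a coboundary. So I would take a more structural route.

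Next, I would seek an injective endomorphism $\rho\colon G \to G$ with $[G:\rho(G)] = [A:nA]$ satisfying $\rho^{\ast}[\varphi] = n[\varphi]$ in $H^{2}(G;A)$. Given such $\rho$, form the pullback extension $\rho^{\ast}\tilde G = \{(g,\tilde g)\in G\times\tilde G : \rho(g) = \pi(\tilde g)\}$, which, since $\rho$ is injective, is canonically identified with the preimage $\pi^{-1}(\rho(G))\subseteq\tilde G$ under the quotient $\pi\colon\tilde G\to G$. This preimage is simultaneously (i) a subgroup of $\tilde G$ of index $[G:\rho(G)] = [A:nA]$, and (ii) an $A$-central extension of $G$ (via the isomorphism $G\cong\rho(G)$ induced by $\rho$) whose Euler class is $\rho^{\ast}[\varphi] = n[\varphi] = e(\tilde H)$. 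The classification $E(G,A) \cong H^{2}(G;A)$ recalled above then gives $\rho^{\ast}\tilde G \cong \tilde H$ as groups, completing the embedding with the asserted index. The non-zero-divisor hypothesis on $n$ enters to ensure multiplication by $n$ on $A$ is injective, so that $n\varphi$ faithfully represents $n[\varphi]$ and the index $[A:nA]$ on centers agrees with the index of the subgroup in $\tilde G$.

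The main obstacle will be constructing $\rho$. In the integer-Heisenberg picture ($G = \mathbb{Z}^{2}$, $A = \mathbb{Z}$, with $[\varphi]$ the generator of $H^{2}\cong\mathbb{Z}$ realized by the bilinear cocycle $((x_{1},y_{1}),(x_{2},y_{2}))\mapsto x_{1}y_{2}$), the scaling $\rho(x,y) = (nx,y)$ does the job on the nose: $\rho^{\ast}\varphi = n\varphi$ at the cochain level, and $[\mathbb{Z}^{2}:\rho(\mathbb{Z}^{2})] = n = [\mathbb{Z}:n\mathbb{Z}]$, recovering the familiar inclusion $\langle X^{n},Y,Z\rangle\hookrightarrow\tilde G$ of index $n$. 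In the dihedral Shephard setting of interest, an analogous generator-scaling endomorphism should be readable from a natural presentation of the relevant base $G$, with the cohomological identity $\rho^{\ast}[\varphi] = n[\varphi]$ reducing to the same type of bilinear computation in the corresponding resolution. Once $\rho$ is in hand, the remainder of the argument is formal.
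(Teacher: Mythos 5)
The step that cannot be carried out is the construction of $\rho$. You need an injective endomorphism $\rho \colon G \to G$ with $[G:\rho(G)] = [A:nA]$ and $\rho^*[\varphi] = n[\varphi]$, but in the case this lemma is actually applied to (Section \ref{sec:furthergeom}, where $G = M$ is a closed surface group, hyperbolic when $h<1$, and $A = \bb{Z}$), no such $\rho$ exists for $n \geq 2$: by multiplicativity of the Euler characteristic, a closed hyperbolic surface group has no proper finite-index subgroup isomorphic to itself. Worse, the conclusion you are aiming for is genuinely false there, not just unreachable by this route. If $K \leq G_\varphi$ has finite index, set $d = [A : K\cap A]$ and $m = [G:\overline K]$; then $Z(K) = K \cap A \cong \bb{Z}$ and $K/Z(K) \cong \overline K$ is a genus-$(m+1)$ surface group, so $K \cong G_{n\varphi}$ forces $m=1$, and then pushing the class of the extension $0 \to K\cap A \to K \to G \to 0$ forward along $K\cap A \hookrightarrow A$ multiplies it by $d$ and must return the generator $[\varphi]$, forcing $d=1$, $K = G_\varphi$, and $n = \pm 1$. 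So for $n\geq 2$ the group $G_{n\varphi}$ is not isomorphic to \emph{any} finite-index subgroup of $G_\varphi$. Your Heisenberg computation succeeds only because $\bb[2]{Z}$, unlike a hyperbolic surface group, admits the self-embedding $(x,y)\mapsto(nx,y)$.

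What is true in general --- and your own opening observation already points at it --- is the statement with the roles of $\tilde G$ and $\tilde H$ interchanged. The set-level map $(a,g)\mapsto(na,g)$ is a homomorphism in the direction $G_\varphi \to G_{n\varphi}$: both sides of the multiplication then carry the cocycle term $n\varphi(g,h)$ and no $n^2$ appears. It is injective precisely when $n$ is not a zero divisor on $A$, and its image $nA \times G$ has index $[A:nA]$. In presentation form this is exactly the map fixing $S$ and sending $t\mapsto nt$ for $t\in T$ that the paper writes down, except that the verification (the line $\varphi(r) = n\psi(r)$, which should read $\psi(r) = n\varphi(r)$) only closes up with domain $G_\varphi$ and codomain $G_\psi$; i.e.\ it exhibits $\tilde G$ as an index-$[A:nA]$ subgroup of $\tilde H$, not the reverse. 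That reversed inclusion is all that is needed downstream, since the lemma is invoked only to conclude that $Sh$ and $\widetilde M$ are commensurable, and commensurability is symmetric. So the fix is not to hunt for $\rho$, but to prove the reversed embedding directly from the cocycle (or presentation) formula and observe that it suffices for the application.
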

\begin{proof}
    Let $[\psi] = e(\tilde H)$ and $[\varphi] = e(\tilde G)$ with representatives $\psi$ and $\varphi$, resp., chosen such that $\psi = n \varphi$.
    We will write $G_\psi \cong \tilde H$ and $G_\varphi \cong \tilde G$, using the  notation given above for the presentation of $G_\psi$ and $G_\varphi$.

    Define a map $\Psi : G_\psi \to G_\varphi$ by $\Psi(s) = s$ for $s \in S$ and $\Psi(t) = nt$ for $t \in T$. 
    To check that this is a well-defined homomorphism we need to verify that $\Psi(r) = e$ for all $r \in \tilde R \amalg Q \amalg T$. 
    This is immediate for $r \in Q$ or $r \in C$, so we only need to verify this for $r \in \tilde R$; that is, we must verify that $\Psi(r) = \varphi(\Psi(r))$ for all $r \in R$. But this is also immediate since 
    \begin{align*}
        \Psi(r) 
        &= r \tag{$r$ is a word in $S$} \\
        &= \varphi(r) \\
        &= n\psi(r) \\
        &= \Psi(\psi(r)). \tag{$\psi(r)$ is a word in $T$}
    \end{align*}
    Thus this map is a well-defined homomorphism.
    Since $n$ is not a zero divisor, it is also injective. It is clear from the definition of $\Psi$ that its image has index $[A : nA]$ in $G_\varphi$.
\end{proof}

Now consider a group homomorphism $f: H \to G$. This induces in a standard way a morphism $f^* : H^2(G;A) \to H^2(H;A)$ on cohomology and thus a map $f^* : E(G,A) \to E(H,A)$ on central extensions via $G_\varphi \mapsto H_{f^*\varphi}$. In addition, $f$ can be lifted to a morphism $f_\varphi : H_{f^*\varphi} \to G_\varphi$. This gives the commutative diagram
\[
\begin{tikzcd}
    H_{f^*\varphi} \arrow[r, "f_\varphi"] \arrow[d] & G_\varphi \arrow[d] \\
    H \arrow[r, "f"'] & G
\end{tikzcd}
\]
As a consequence of the above diagram commuting, we have:

\begin{lemma}  \label{lem:imfinindimpliesliftfinind}
    For all $[\varphi] \in H^2(G;A)$, $[G_\varphi : f_\varphi(H_{f^*\varphi})] = [G : f(H)]$.
    In particular, if $f(H)$ has finite index in $G$, then $f_\varphi(H_{f^*\varphi})$ has finite index in $G_\varphi$.
\end{lemma}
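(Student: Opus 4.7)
The plan is to identify $f_\varphi(H_{f^*\varphi})$ as the full preimage $\pi^{-1}(f(H))$, where $\pi : G_\varphi \to G$ is the quotient map of the central extension, and then apply the standard subgroup correspondence theorem for a surjection. The key observation that makes this work is that $f_\varphi$ restricts to the identity on the central $A$.

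First I would verify that $f_\varphi|_A = \id_A$. Using the explicit presentation \eqref{eq:gpextpres}, $H_{f^*\varphi}$ is generated by $S_H \amalg T$ (where $S_H$ generates $H$), and $f_\varphi$ is the map defined on generators by $s \mapsto f(s)$ for $s \in S_H$ and $t \mapsto t$ for $t \in T$. Thus $f_\varphi$ fixes the subgroup generated by $T$, which is exactly the copy of $A$ in $H_{f^*\varphi}$. (Equivalently, $H_{f^*\varphi}$ is the pullback $\{\,(h,\tilde g) \in H \times G_\varphi : f(h) = \pi(\tilde g)\,\}$ and $f_\varphi$ is projection to the second coordinate, so its restriction to $A = \{e\} \times A \subseteq H_{f^*\varphi}$ is the identity.) In particular, $A \subseteq f_\varphi(H_{f^*\varphi})$.

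Next I would show the equality $f_\varphi(H_{f^*\varphi}) = \pi^{-1}(f(H))$. The commutative square given in the excerpt yields $\pi \circ f_\varphi = f \circ \pi'$, where $\pi' : H_{f^*\varphi} \to H$ is the quotient; since $\pi'$ is surjective, this gives $\pi(f_\varphi(H_{f^*\varphi})) = f(H)$, hence the inclusion $\subseteq$. Conversely, for any $\tilde g \in \pi^{-1}(f(H))$, write $\pi(\tilde g) = f(h)$, lift $h$ to some $\tilde h \in H_{f^*\varphi}$, and note that $\tilde g$ and $f_\varphi(\tilde h)$ have the same image in $G$, so they differ by an element of $A = \ker(\pi)$. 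Since $A \subseteq f_\varphi(H_{f^*\varphi})$, we conclude $\tilde g \in f_\varphi(H_{f^*\varphi})$.

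Finally, since $\ker(\pi) = A$ is contained in $f_\varphi(H_{f^*\varphi})$, the correspondence theorem for $\pi$ gives a coset-preserving bijection between subgroups of $G_\varphi$ containing $A$ and subgroups of $G$, yielding
\[ [G_\varphi : f_\varphi(H_{f^*\varphi})] \;=\; [G : \pi(f_\varphi(H_{f^*\varphi}))] \;=\; [G : f(H)], \]
which is the desired equality. I do not anticipate a real obstacle here; the only subtle point is checking that $f_\varphi$ is a morphism of central extensions (equivalently, fixes $A$ pointwise), after which everything reduces to the standard correspondence theorem.
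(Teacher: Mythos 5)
Your proposal is correct and follows essentially the same route as the paper: identify $f_\varphi(H_{f^*\varphi})$ with the full preimage $\Phi^{-1}(f(H))$ under the quotient $\Phi : G_\varphi \to G$, then count cosets via the correspondence for preimages under a surjection. The paper asserts the identification $f_\varphi(H_{f^*\varphi}) = \Phi^{-1}(f(H))$ without proof, whereas you verify it carefully (via $f_\varphi|_A = \id_A$ and the commutative square); this is a welcome amplification but not a different argument.
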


\begin{proof}
Denote the rightmost vertical morphism of the diagram by $\Phi : G_\varphi \to G$. Then $f_\varphi(H_{f^*\varphi}) \cong \Phi^{-1}(f(H))$. Since $G_\varphi = \Phi^{-1}(G)$ and $\Phi$ is a surjective morphism, it follows that $[G_\varphi : f_\varphi(H_{f^*\varphi})] = [\Phi^{-1}(G) : \Phi^{-1}(f(H))] = [G : f(H)]$.
\end{proof}

It is a standard fact that if $f$ is injective, so is $f_\varphi$ \cite[Ex.~5.140.3]{dructu2018geometric}, so in particular, if $H$ is a finite index subgroup of $G$, then for any central extension $\tilde G$ of $G$, there is a finite index subgroup $\tilde H$ which is a central extension of $H$ over the same central copy of $A$.

We also know:
\begin{prop} \label{prop:orderofkernel} \textup{\cite[\S 3.G]{hatcher2002algebraic}}
    If $n = [G: f(H)]$, then $\ker(f^*)$ consists only of elements whose order divides $n$.
\end{prop}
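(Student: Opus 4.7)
The plan is to invoke the transfer (corestriction) homomorphism, which is the standard tool relating the cohomology of a finite-index subgroup to that of the ambient group. Since the result is only applied in the paper with $f$ injective (arising from inclusions of finite-index subgroups of central extensions), I would describe the proof in the case that $f$ realizes $H$ as a finite-index subgroup of $G$; the case of general $f$ with $[G : f(H)] = n$ finite reduces to this after factoring $f$ through its image.

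The key fact to establish is the existence of a transfer map $\tau : H^*(H; A) \to H^*(G; A)$ whose composition with $f^*$ equals multiplication by $n$. I would construct $\tau$ at the chain level as follows: choose a $K(G,1)$-complex $Y$ and let $\widetilde Y \to Y$ be the $n$-sheeted cover corresponding to $H \leq G$, so that $\widetilde Y$ is a $K(H,1)$. Each cell of $Y$ lifts to exactly $n$ cells of $\widetilde Y$, and sending a cell of $Y$ to the formal sum of its $n$ lifts defines a chain map $C_*(Y) \to C_*(\widetilde Y)$. Dualizing with coefficients in $A$ yields a cochain map which commutes with the differentials and descends to the desired map $\tau$ on cohomology. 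By construction, the composition $\tau \circ f^*$ at the chain level sends each cell of $Y$ to $n$ times itself, yielding the identity $\tau \circ f^* = n \cdot \mathrm{id}_{H^*(G;A)}$.

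Given this, the proposition is immediate: if $\alpha \in \ker(f^*)$, then
$$n \alpha \;=\; \tau(f^*(\alpha)) \;=\; \tau(0) \;=\; 0$$
in $H^2(G; A)$, so the order of $\alpha$ divides $n$. There is no real obstacle, since this is a classical fact in group cohomology (as indicated by the citation to Hatcher); the only technical content is the chain-level construction and verification of the transfer map, which is entirely routine.
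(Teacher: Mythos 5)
Your proof is correct and is exactly the standard transfer-homomorphism argument that the paper's citation (Hatcher, \S 3.G) refers to; the paper gives no independent proof of this proposition. The chain-level construction of $\tau$ via summing over the $n$ lifts of each cell, the identity $\tau \circ f^* = n\cdot \mathrm{id}$, and the conclusion $n\alpha = \tau(f^*(\alpha)) = 0$ for $\alpha \in \ker(f^*)$ all match the cited source.
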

In particular, $f^*$ is always injective on the free part of $H^2(G;A)$. 

This leads us to:

\begin{prop} \label{prop:infinitenotcat0}
    Suppose $G$ is finitely generated and $A \cong \bb[n]{Z}$ for some $n$. If $\tilde G$ is an $A$-central extension of $G$ such that $e(\tilde G)$ has infinite order, then $\tilde G$ cannot act properly by semi-simple isometries on a $\cat(0)$ space. In particular $\tilde G$ is not $\cat(0)$. 
\end{prop}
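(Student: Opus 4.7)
The plan is to derive a contradiction by showing that any proper semi-simple action of $\tilde G$ on a $\cat(0)$ space forces $e(\tilde G)$ to vanish upon restriction to a finite-index subgroup of $G$, contradicting Proposition \ref{prop:orderofkernel}. First I would apply the Flat Torus Theorem (Bridson--Haefliger, II.7.1) to the central image of $A \cong \bb[n]{Z}$ in $\tilde G$: since $A$ is free abelian of rank $n$ and acts properly by semi-simple isometries, there is an $A$-invariant subspace $\mathrm{Min}(A) \cong Y \times \bb[n]{E}$ on which $A$ acts trivially on the $Y$ factor and by a full-rank lattice of translations on the $\bb[n]{E}$ factor.

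Next, since $A$ is central in $\tilde G$, every element of $\tilde G$ commutes with $A$ inside $\isom(X)$. By the centralizer part of the Flat Torus Theorem, $\tilde G$ preserves $\mathrm{Min}(A)$ together with its product decomposition, so projection to the Euclidean factor gives a homomorphism $\rho \colon \tilde G \to \isom(\bb[n]{E})$. The image $\rho(\tilde G)$ centralizes the full-rank translation lattice $\rho(A)$; since the centralizer of such a lattice in $\isom(\bb[n]{E}) = \bb[n]{R} \rtimes O(n)$ is $\bb[n]{R}$, we conclude $\rho(\tilde G) \subseteq \bb[n]{R}$. Because $\tilde G$ is finitely generated (as an extension of the finitely generated $G$ by $\bb[n]{Z}$), the image $\rho(\tilde G)$ is a finitely generated subgroup of $\bb[n]{R}$ containing the full-rank lattice $\rho(A)$, so $[\rho(\tilde G) : \rho(A)] < \infty$.

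Set $\tilde G_0 = \rho^{-1}(\rho(A))$, a finite-index subgroup of $\tilde G$. Note that $\rho$ maps $A$ isomorphically onto $\rho(A)$, so $\rho|_{\tilde G_0}$ is (up to this identification) a retraction onto the central subgroup $A$, producing a direct product splitting $\tilde G_0 \cong A \times H$ with $H = \ker(\rho|_{\tilde G_0})$. Setting $G_0 = \tilde G_0 / A$, a finite-index subgroup of $G$, this exhibits $\tilde G_0$ as a trivial $A$-central extension of $G_0$, so Proposition \ref{prop:trivialsplit} gives $e(\tilde G_0) = 0$ in $H^2(G_0; A)$. On the other hand, the discussion preceding Lemma \ref{lem:imfinindimpliesliftfinind} identifies $e(\tilde G_0) = i^* e(\tilde G)$ for the inclusion $i \colon G_0 \hookrightarrow G$, and Proposition \ref{prop:orderofkernel} ensures $\ker(i^*)$ is torsion since $[G : G_0] < \infty$. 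Since $e(\tilde G)$ has infinite order, $i^* e(\tilde G) \neq 0$, giving the needed contradiction.

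The main obstacle is verifying that $\rho(\tilde G)$ sits inside the translations of $\bb[n]{E}$: this step combines the centralizer conclusion of the Flat Torus Theorem with the group-theoretic fact that the centralizer of a full-rank lattice in $\isom(\bb[n]{E})$ consists only of translations. Once that is in hand, the cohomological machinery developed in this section converts the algebraic splitting on a finite-index subgroup into the desired contradiction with the infinite order of the Euler class.
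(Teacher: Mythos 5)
Your overall strategy is sound and, modulo one step, ends in exactly the same place as the paper: the paper simply cites \cite[Thm.~II.6.12]{bridson2013metric} to obtain a finite-index subgroup of $\tilde G$ containing the central $A$ as a direct factor, and then runs precisely the cohomological contradiction you describe (Proposition \ref{prop:orderofkernel} to see the Euler class survives restriction, Proposition \ref{prop:trivialsplit} to rule out the splitting). What you do differently is attempt to re-derive that black-boxed theorem from the Flat Torus Theorem, and that re-derivation has a genuine gap.

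The gap is the claim that a finitely generated subgroup of $\bb[n]{R}$ containing the full-rank lattice $\rho(A)$ must contain it with finite index. This is false: $\bb{Z} + \bb{Z}\sqrt{2}$ is a finitely generated subgroup of $\bb{R}$ containing the full-rank lattice $\bb{Z}$ with infinite index. In general $\rho(\tilde G)$ is free abelian of some rank $m \geq n$, and whenever $m > n$ the index $[\rho(\tilde G):\rho(A)]$ is infinite. Concretely, let $\tilde G = \bb[2]{Z}$ act on $\bb[2]{E}$ by the translations $(1,0)$ and $(\sqrt{2},1)$ and let $A = \langle (1,0)\rangle$; then $\mathrm{Min}(A) = Y \times \bb[1]{E}$ with $\bb[1]{E}$ the $x$-axis direction, $\rho(\tilde G) = \bb{Z}+\bb{Z}\sqrt{2}$, and your $\tilde G_0 = \rho^{-1}(\rho(A)) = A$ has infinite index in $\tilde G$, so the argument stalls. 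The repair is the elementary-divisors argument: since $\rho(\tilde G) \cong \bb[m]{Z}$, choose a basis adapted to the subgroup $\rho(A)$ to produce a finite-index subgroup $L \leq \rho(\tilde G)$ with $L = \rho(A) \oplus L'$, and set $\tilde G_0 = \rho^{-1}(L)$; this retracts onto $A$, and the remainder of your argument then goes through verbatim. That repaired statement is exactly the content of \cite[Thm.~II.6.12]{bridson2013metric}, which the paper invokes directly (the paper also spends a few sentences disposing of the degenerate cases where the central $A$ fails to act faithfully or contains an elliptic element, which your appeal to properness handles implicitly).
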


\begin{proof}
    We want to utilize \cite[Thm.~II.6.12]{bridson2013metric}, which states that if
    a finitely generated group $K$ acts by isometries on a $\cat(0)$ space, and if $K$ contains a central copy of $\bb[d]{Z}$ which acts faithfully by hyperbolic isometries (save for the identity), then this copy of $\bb[d]{Z}$ is virtually a direct factor of $K$. 

    To apply this to our situation, let $\tilde H$ be a finite index subgroup of $\tilde G$ containing %
    the canonical copy of $A \cong \bb[n]{Z}$ contained in $Z(\tilde G)$. 
    We claim that there is no subgroup $H'$ of $\tilde H$ such that $\tilde H \cong H' \times A$. 
    In order for such a subgroup to exist, we must have $A \subseteq Z(\tilde H)$. But then $H = \tilde H/A$ is a subgroup of $G$, and since $\tilde H$ has finite index in $\tilde G$, we know $H$ has finite index in $G$. In particular, if $\iota : H \to G$ denotes the inclusion morphism, then Proposition \ref{prop:orderofkernel} tells us that $e(\tilde H) = \iota^*e(\tilde G)$ is nontrivial in $H^2(H;A)$ since $e(\tilde G)$ has infinite order. Since this class is non-trivial, $\tilde H$ does not split as a direct product by Proposition \ref{prop:trivialsplit}. 

    By \cite[Thm.~II.6.12]{bridson2013metric}, we conclude that if $\tilde G$ acts by semi-simple isometries on a $\cat(0)$ space $X$, then the central copy of $A$ in $\tilde G$ cannot act faithfully by hyperbolic isometries.
    If the action is not faithful, then since $A \cong \bb[n]{Z}$ the action cannot be proper.
    So assume the action is not by hyperbolic isometries, meaning $A$ contains some non-trivial elliptic isometry $\gamma$ of $X$ (parabolic isometries are not semi-simple). Since $A \cong \bb[n]{Z}$, we know $\langle \gamma \rangle \cong \bb{Z}$. Since $\gamma$ is elliptic, then $\langle \gamma \rangle \cong \bb{Z}$ fixes a point of $X$, and hence does not act properly. 

    The fact that $\tilde G$ cannot be $\cat(0)$ follows from \cite[Prop.~II.6.10(2)]{bridson2013metric}. (Recall that a group is $\cat(0)$ if it acts properly and cocompactly by isometries on a $\cat(0)$ space.)
\end{proof}

\subsection{Central products}

For some later results, it becomes useful to use a commutative version of the amalgamated product.

\begin{defn}
    Let $G_1$, $G_2$, and $Z$ be any groups equipped with injective homomorphisms $\theta_i : Z \to Z(G_i)$. Let $N = \{\, (\theta_1(z), \theta_2(z)^{-1}) : z \in Z\,\} \leq G_1 \times G_2$. We define the \emph{amalgamated direct product} or \emph{central product} of $G_1$ and $G_2$ over $Z$ by
    \[
        G_1 \times_Z G_2 \coloneqq (G_1 \times G_2)/N
    \]
\end{defn}

Note that $N$ is a central subgroup of $G_1 \times G_2$, so this construction always gives a well-defined group.

Take subgroups $H_i$ of $G_i$ and let $N' = (H_1 \times H_2) \cap N$ and $Z' = \theta_1^{-1}(H_1) \cap \theta_2^{-1}(H_2) \subseteq Z$.
Note that
$N' = \{\, (\theta_1(z), \theta_2(z)^{-1}) : z \in Z' \,\}$.
By the isomorphism theorems,
\[
    H_1 \times_{Z'} H_2 = (H_1 \times H_2)/N' \cong (H_1 \times H_2)N/N \leq (G_1 \times G_2)/N. 
\] 
This demonstrates $H_1 \times_{Z'} H_2$ as a natural subgroup of $G_1 \times_Z G_2$. By the standard proof of the isomorphism theorems, the map that realizes this inclusion is $(h_1,h_2)N' \mapsto (h_1, h_2) N$. In addition, one readily sees that if $H_i$ is finite index in $G_i$ ($i = 1,2$), then $H_1 \times_{Z'} H_2$ is finite index in $G_1 \times_Z G_2$. 

As a special case, we can take $H_1 = G_1$ and $H_2 = \{\id_{G_2}\}$, in which case $Z' = \{\id_Z\}$ (or similarly, $H_1 = \{\id_{G_1}\}$ and $H_2 = G_2$). It is clear that $G_1 \cong G_1 \times_{Z'}  \{\id_{G_2}\}$ via the map $g \mapsto (g, \id_{G_2})$, since $Z'$ is trivial. We then apply our previous remarks to this situation to obtain

\begin{prop}
    \label{prop:factofdirectembed}
    For any groups $G_i$ and $Z$ as above, the groups $G_1$ and $G_2$ embed into $G_1 \times_Z G_2$ via the maps $\epsilon_1 : g \mapsto (g,\id_{G_2})N$ and $\epsilon_2 : g \mapsto (\id_{G_1},g)N$, respectively.
\end{prop}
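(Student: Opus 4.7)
The proof is essentially an application of the subgroup observation made immediately before the Proposition, specialized to the trivial second factor. The plan is to specialize $H_1 = G_1$ and $H_2 = \{\id_{G_2}\}$ (and symmetrically for $\epsilon_2$), verify that this forces $Z'$ and hence $N'$ to be trivial, and conclude that $H_1 \times_{Z'} H_2 \cong G_1$ embeds in $G_1 \times_Z G_2$ exactly via the map $\epsilon_1$ described.

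In detail, I would first compute $Z' = \theta_1^{-1}(G_1) \cap \theta_2^{-1}(\{\id_{G_2}\})$. Since $\theta_1(Z) \subseteq G_1$ automatically, the first intersectand is all of $Z$, while the second is $\ker(\theta_2)$, which is trivial because $\theta_2$ is injective by hypothesis. Hence $Z' = \{\id_Z\}$, and in turn $N' = \{(\theta_1(\id_Z), \theta_2(\id_Z)^{-1})\} = \{(\id_{G_1}, \id_{G_2})\}$. Therefore $H_1 \times_{Z'} H_2 = (G_1 \times \{\id_{G_2}\})/N'$ is canonically isomorphic to $G_1$ via $(g, \id_{G_2}) \mapsto g$. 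The preceding discussion identifies this group with the subgroup $(G_1 \times \{\id_{G_2}\}) N / N$ of $(G_1 \times G_2)/N = G_1 \times_Z G_2$, and tracks the composite isomorphism-then-inclusion as $g \mapsto (g, \id_{G_2})N$, which is precisely $\epsilon_1$. The argument for $\epsilon_2$ is entirely symmetric.

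Alternatively, a direct verification is very short: $\epsilon_1$ is the composition of the homomorphism $g \mapsto (g, \id_{G_2})$ from $G_1$ into $G_1 \times G_2$ with the quotient $G_1 \times G_2 \twoheadrightarrow (G_1 \times G_2)/N$, so it is a homomorphism. For injectivity, if $\epsilon_1(g) = \epsilon_1(g')$ then $(g^{-1}g', \id_{G_2}) \in N$, so there is some $z \in Z$ with $\theta_1(z) = g^{-1}g'$ and $\theta_2(z)^{-1} = \id_{G_2}$; injectivity of $\theta_2$ forces $z = \id_Z$, whence $g = g'$.

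There is no real obstacle here beyond bookkeeping; the content of the Proposition is that the hypothesis of $\theta_1, \theta_2$ being \emph{injective} (rather than merely homomorphisms into the centers) suffices to rule out any collapse of the factors modulo $N$.
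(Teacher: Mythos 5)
Your proposal is correct and matches the paper's argument: the paper likewise specializes the preceding subgroup discussion to $H_1 = G_1$, $H_2 = \{\id_{G_2}\}$, notes that $Z'$ (hence $N'$) is trivial, and reads off the embedding $\epsilon_1$ from the inclusion $(h_1,h_2)N' \mapsto (h_1,h_2)N$. Your added direct verification of injectivity via $\theta_2(z)^{-1} = \id_{G_2} \Rightarrow z = \id_Z$ is a fine supplement but is not needed beyond what the specialization already gives.
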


Just as one has the notion of internal direct product, we may define an internal central product.

\begin{prop} \label{prop:internalcentralproduct}
    Suppose $G$ is any group with subgroups $H_1, H_2$ such that $h_1h_2 = h_2h_1$ for all $h_i \in H_i$ ($i = 1,2$). Let $Z = H_1 \cap H_2$. Then $H_1H_2 \leq G$ is isomorphic to $H_1 \times_Z H_2$.
\end{prop}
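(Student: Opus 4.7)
The plan is to realize $H_1H_2$ as a homomorphic image of $H_1 \times H_2$ whose kernel is exactly the subgroup $N$ that one quotients by to form $H_1 \times_Z H_2$, and then invoke the first isomorphism theorem.

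First I would verify that the construction $H_1 \times_Z H_2$ makes sense in this setting, i.e.\ that $Z = H_1 \cap H_2$ is central in each $H_i$. If $z \in Z$, then $z \in H_2$ commutes with every element of $H_1$ by the commutation hypothesis, and in particular with every element of $H_1$ that happens to lie in $Z$; symmetrically $z$ commutes with every element of $H_2$. Thus $Z \subseteq Z(H_i)$, and we may take $\theta_i : Z \hookrightarrow Z(H_i)$ to be the inclusion in each case. I would also note in passing that $H_1H_2$ really is a subgroup of $G$: the commutation hypothesis gives $(h_1h_2)(h_1'h_2') = (h_1h_1')(h_2h_2') \in H_1H_2$ and $(h_1h_2)^{-1} = h_2^{-1}h_1^{-1} = h_1^{-1}h_2^{-1} \in H_1H_2$.

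Next I would define
\[
    \varphi : H_1 \times H_2 \longrightarrow H_1H_2, \qquad (h_1,h_2) \longmapsto h_1h_2.
\]
Again by the commutation hypothesis, $\varphi$ is a homomorphism, and by the very definition of $H_1H_2$ it is surjective. Computing the kernel: $(h_1,h_2) \in \ker\varphi$ iff $h_1 = h_2^{-1}$, which forces $h_1 \in H_1 \cap H_2 = Z$, so $\ker\varphi = \{\,(z, z^{-1}) : z \in Z\,\}$. This is precisely the subgroup $N$ defined in the construction of $H_1 \times_Z H_2$ for the inclusion maps $\theta_1, \theta_2$. The first isomorphism theorem then gives
\[
    H_1H_2 \;\cong\; (H_1 \times H_2)/N \;=\; H_1 \times_Z H_2,
\]
which is the desired isomorphism.

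There is no real obstacle here; the only genuinely content-bearing step is checking that $Z$ is central in each $H_i$ so that the amalgamated direct product is actually defined, and this is immediate from the hypothesis that elements of $H_1$ and $H_2$ commute. The rest is a textbook application of the first isomorphism theorem.
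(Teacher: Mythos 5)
Your proof is correct and follows essentially the same route as the paper: both define the multiplication map $H_1 \times H_2 \to H_1H_2$, identify its kernel with the subgroup $N = \{(z,z^{-1}) : z \in Z\}$, and apply the first isomorphism theorem. The extra checks you include (that $Z$ is central in each $H_i$ so the central product is defined, and that $H_1H_2$ is a subgroup) are sound and only make the argument more complete.
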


\begin{proof}
    First note that since the elements of $H_1$ and the elements of $H_2$ commute, $Z \subseteq Z(H_i)$ for $i = 1,2$. Let $\iota_i : Z \to H_i$ ($i = 1,2$) denote the inclusion map. Let $\alpha : H_1 \times H_2 \to G$ be given by $(h_1,h_2) \mapsto h_1h_2$. Clearly $\alpha$ is a surjective homomorphism. The kernel of $\alpha$ is $\{\,(h_1,h_2) : h_1 = h_2^{-1}\,\}$, or rewritten, 
    \[
        \{\,(h_1,h_2^{-1}) : h_1 = h_2, h_i \in H_i\,\}
        = \{\,(\iota_1(h),\iota_2(h)^{-1}) : h \in H_1 \cap H_2 = Z\,\}.
    \]
    Thus
    \[
        H_1H_2 \cong (H_1 \times H_2)/\ker(\alpha) \cong H_1 \times_Z H_2. \qedhere
    \]
\end{proof}

We need one final property of central products before we can continue.

\begin{prop} \label{prop:usefulcounting}
    Take $G_i$ and $Z$ as above, and identify $G_i$ with the subgroup of $G_1 \times_Z G_2$ as in Proposition \ref{prop:internalcentralproduct}. Then $[G_1 \times_Z G_2 : G_1] = [G_2 : Z]$ (and similarly, $[G_1 \times_Z G_2 : G_2] = [G_1 : Z]$).
\end{prop}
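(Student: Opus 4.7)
The plan is to apply the correspondence theorem for subgroups of a quotient. Under the quotient map $G_1 \times G_2 \to (G_1 \times G_2)/N = G_1 \times_Z G_2$, the subgroup $\epsilon_1(G_1) = \{(g, \id_{G_2})N : g \in G_1\}$ corresponds to the subgroup $(G_1 \times \{\id_{G_2}\}) \cdot N$ of $G_1 \times G_2$. Since this latter subgroup contains $N$, the correspondence theorem gives
\[
[G_1 \times_Z G_2 : \epsilon_1(G_1)] = [G_1 \times G_2 : (G_1 \times \{\id_{G_2}\}) \cdot N].
\]

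So the task reduces to identifying $(G_1 \times \{\id_{G_2}\}) \cdot N$ as a subset of $G_1 \times G_2$. A short direct computation using the definition $N = \{(\theta_1(z), \theta_2(z)^{-1}) : z \in Z\}$ shows that this product equals $G_1 \times \theta_2(Z)$: the first coordinate $g \cdot \theta_1(z)$ ranges over all of $G_1$ as $g$ varies (for any fixed $z$), while the second coordinate $\theta_2(z)^{-1}$ independently ranges over $\theta_2(Z)$. Then the standard index formula for a product of subgroups yields $[G_1 \times G_2 : G_1 \times \theta_2(Z)] = [G_2 : \theta_2(Z)]$, and since $\theta_2$ is injective, $[G_2 : \theta_2(Z)] = [G_2 : Z]$. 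The symmetric statement for $G_2$ follows by exchanging the roles of the two factors.

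I do not expect a real obstacle here; the argument is a routine application of the correspondence theorem plus one set-equality computation. The only mildly subtle point is verifying that $(G_1 \times \{\id_{G_2}\}) \cdot N$ is literally the product subgroup $G_1 \times \theta_2(Z)$ rather than some diagonal-type object, but this is immediate from the fact that $G_1 \times \{\id_{G_2}\}$ already projects onto $G_1$ in the first coordinate.
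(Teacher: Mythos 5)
Your proof is correct. It takes a mildly different route from the paper's: the paper stays downstairs in the quotient, identifying $G_1 \times_Z G_2$ with the internal product $\epsilon_1(G_1)\epsilon_2(G_2)$ whose factors intersect in a copy of $Z$ (via Proposition \ref{prop:internalcentralproduct}), and then invokes the standard product formula $[G_1G_2 : G_1] = [G_2 : G_1 \cap G_2]$. You instead pull $\epsilon_1(G_1)$ back to the direct product $G_1 \times G_2$ via the correspondence theorem and compute its preimage $(G_1 \times \{\id_{G_2}\})\cdot N = G_1 \times \theta_2(Z)$ explicitly, then count there. These are two faces of the same counting argument --- your set equality $(G_1 \times \{\id_{G_2}\})\cdot N = G_1 \times \theta_2(Z)$ is precisely what forces the intersection $\epsilon_1(G_1) \cap \epsilon_2(G_2)$ in the paper's version to be the copy of $Z$ --- but your version is self-contained from the definition of $N$ and does not lean on the internal-product identification, at the cost of one extra (easy) explicit computation. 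All the steps you cite (index preservation under the correspondence theorem for subgroups containing $N$, the index formula $[G_1 \times G_2 : G_1 \times \theta_2(Z)] = [G_2 : \theta_2(Z)]$, and injectivity of $\theta_2$) check out.
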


This follows immediately from Proposition \ref{prop:internalcentralproduct} and standard counting results ($[G_1G_2 : G_1] = [G_2 : G_1 \cap G_2] = [G_2 : Z]$).

\section{Dihedral Shephard groups}
\label{sec:centext}

The goal of this section is to study the geometry of the infinite dihedral Shephard groups in detail. We display them as \bb{Z}-central extensions of infinite triangle groups whose Euler class has infinite order, proving the first statement of Theorem \ref{thm:main2gen}. Following this, we determine finer properties which we believe are interesting in their own right (and in particular complete the proof of Theorem \ref{thm:main2gen}).

\subsection{Dihedral Shephard groups as central extensions}
\label{subsec:dihedralcentral}

We start by examining the second integral cohomology of the triangle groups.
Let $(p,q,r)$ be a triple of positive integers, and define $h = \frac{1}{p} + \frac{1}{q} + \frac{1}{r}$. 
We let $\bb{Y} = \bb{S}, \bb{E}$, or $\bb{H}$ when $h > 1$, $h = 1$, or $h < 1$, respectively. 
Define a triangle $T(p,q,r)$ in $\bb[2]{Y}$ with vertices $a$, $b$, and $c$, and angles $\pi/p$, $\pi/q$, $\pi/r$ at $a$, $b$, and $c$, respectively.
The triangle group $\Delta(p,q,r)$ is the group generated by rotations of angle $2\pi/p$, $2\pi/q$, and $2\pi/r$ about the vertices $a$, $b$, and $c$, resp., of $T$ in \bb[2]{Y}. 
(We note that sometimes $\Delta(p,q,r)$ is called a \emph{von Dyck group} and ``triangle group'' is sometimes used to refer to 3-generator Coxeter groups.)
This group has the well-known presentations
\begin{align*}
    \Delta(p,q,r) &= \langle\, a,b,c \mid a^p = b^q = c^r = abc = e \,\rangle \\
    &= \langle\, a,c \mid a^p = (ac)^q = c^r = e \,\rangle
\end{align*}
where, by abusing notation, the generators $a$, $b$, and $c$ correspond to the respective aforementioned rotations about the vertices $a$, $b$, and $c$ of $T$. 
We will use the second presentation to compute the cohomology.
Since we're interested in only the second integral cohomology, we describe a construction of the 3-skeleton of a $K(\Delta,1)$.

First let $K^{(2)}$ be the presentation complex for $\Delta(p,q,r)$; that is, the cell complex with one 0-cell $x$, two (oriented) 1-cells labeled $a$ and $b$, and three 2-cells labeled $e_a$, $e_c$, and $e_{ac}$. 
The attaching map of the cell $e_a$ takes the boundary $\partial e_a$ to the loop $a^p$ with positive orientation, and similarly $e_c$ and $e_{ac}$ are attached to the loops $c^r$ and $(ac)^q$, respectively.

Let $\widetilde K^{(2)}$ denote the universal cover of $K^{(2)}$. Note that $\widetilde K^{(2)}$ is the Cayley 2-complex of $\Delta(p,q,r)$ and the 1-skeleton of $\widetilde K^{(2)}$ is the Cayley graph of $\Delta(p,q,r)$. This Cayley graph is the 1-skeleton of the semiregular tiling $\c T = \c T(p,2q,r)$ of \bb[2]{Y} by $p$-gons, $2q$-gons, and $r$-gons (e.g., \cite{MorenoStypa}). See Figure \ref{fig:334cayley} for an example. The cell structure on $\widetilde K^{(2)}$ is obtained from this tiling $\c T$ by gluing $(n-1)$ extra cells to the boundary of each $n$-gon. 

\begin{figure}
    \centering
    \includegraphics[width=0.45\textwidth]{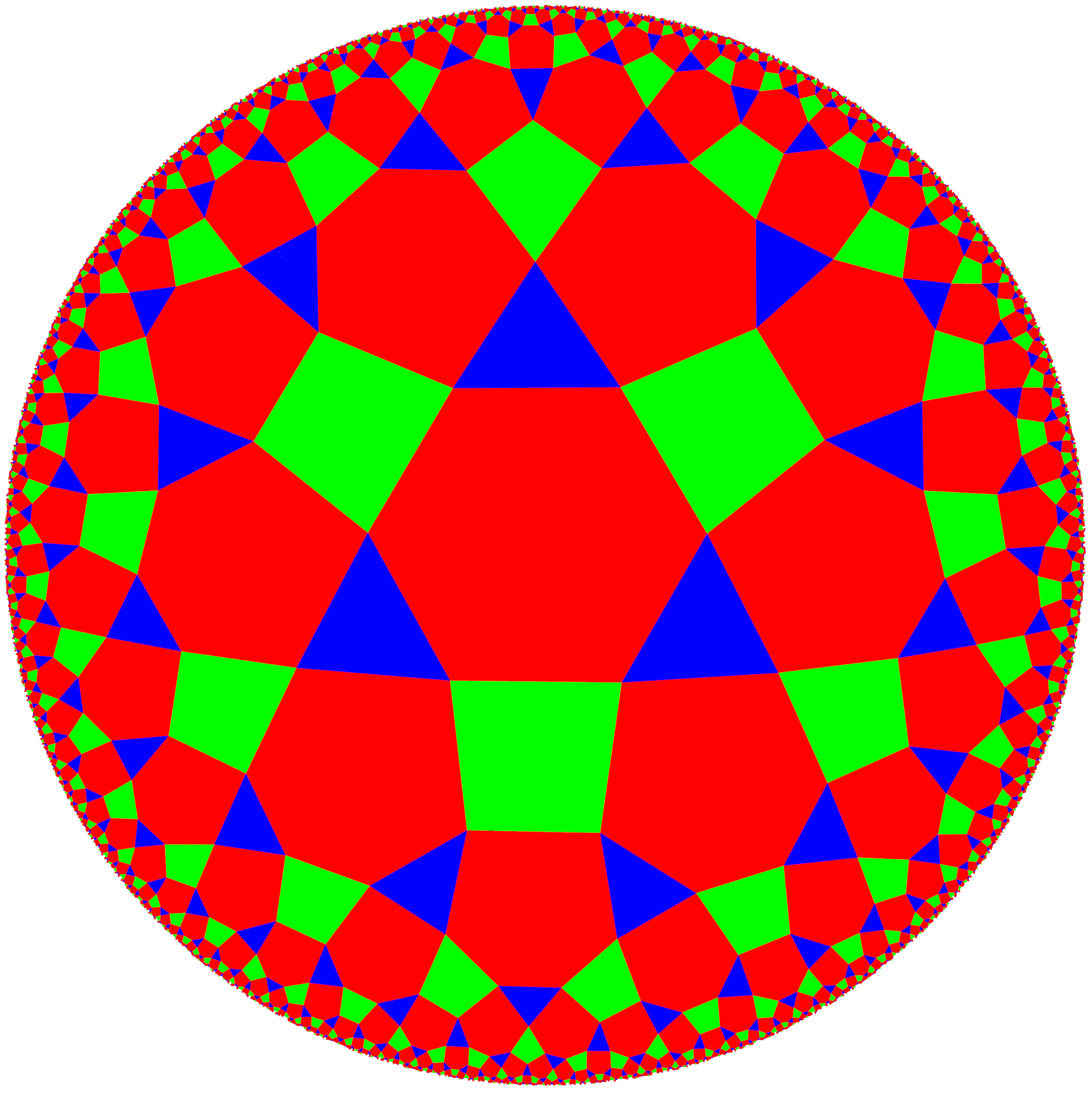}
    \caption{The tiling $\c T(3,6,4)$ whose 1-skeleton is the Cayley graph of $\Delta(3,3,4)$ \cite{Kaleido}}
    \label{fig:334cayley}
\end{figure}

We now obtain the cell complex $\widetilde K$ by filling the generators of $\pi_2(\widetilde K^{(2)})$ as follows. Choose an $n$-gon $E$ of $\c T$, and label all cells attached to $\partial E$ in $\widetilde K^{(2)}$ (including $E$) by $E_1,\dots, E_n$. 
The cells $E_i \cup E_{i+1}$ (with indices mod $n$) form a sphere in $\widetilde K^{(2)}$, so attach a 3-cell $E_{i, i+1}$ to this sphere. 
We will denote $E_S = \bigcup E_{i, i+1}$. Notice that $E_S$ is homeomorphic to a 3-sphere. 
We obtain $\widetilde K$ from $\c T$ by replacing each $n$-gon $E$ with the sphere $E_S$. 
Notice that $\widetilde K$ is the wedge of the spheres $E_S$. In particular, $\pi_2(\widetilde K) = 0$.

There is a natural action of $\Delta(p,q,r)$ on $\widetilde K$ coming from the action on $\widetilde K^{(2)}$ by deck transformations. For an $n$-gon of $\c T$, The stabilizer of the set $\bigcup E_i$ is conjugate to exactly one of $\langle a \rangle$, $\langle ac \rangle$, or $\langle c \rangle$ (depending on if $n = p$, $2q$, or $r$, respectively). 
The action of this stabilizer on $E_S$ is simply the standard action of $\bb{Z}/n\bb{Z}$ on \bb[3]{S}. This action is still free and properly discontinuous, so we may define the quotient space $K = \widetilde K / \Delta$ such that $\widetilde K$ is the universal cover of $K$. Note that $K^{(2)}$ (as defined before) is the 2-skeleton of $K$.

Since $\widetilde K$ is the universal cover of $K$, we know $\pi_2(K) \cong \pi_2(\widetilde K) = 0$.
This means that $H_2(\Delta) \cong H_2(K)$ and $H^2(\Delta; \bb{Z}) \cong H^2(K; \bb{Z})$, where $\Delta = \Delta(p,q,r)$.
Note that for any $n$-gon $E$ of $\c T$, the cell $E_i$ of $\widetilde K$ maps to exactly one of the cells $e_a$, $e_{ac}$, or $e_c$ if $n = p$, $2q$, or $r$, respectively. 
In particular, for all such $E$, the 3-cell $E_S$ in $\widetilde K$ maps to a single 3-cell of $K$ which we will denote $f_a$, $f_{ac}$, and $f_c$ if $n = p$, $2q$, or $r$, resp. Topologically, $\partial f_g = e_g \cup g \cup x$ if $g = a$ or $c$, and $\partial f_{ac} = e_{ac} \cup a \cup c \cup x$.
Moreover, the closure $\overline f_g$ of $f_g$ is the $3$-dimensional lens space of order $|g|$ with its standard cell structure for $g = a,b$ and $\overline f_{ac}$ is the $3$-dimensional lens space of order $q$ with two points identified at $x$. 

To fix notation for the computation of the cohomology of $K$, let 
$C_n$ denote the free abelian group on the $n$-cells of $K$ and $d_n : C_n \to C_{n-1}$ the standard cellular boundary map. We then let $Z_n = \ker(d_{n})$ and $B_n = \im(d_{n+1})$ so that $H_n(K) = Z_n / B_n$. As usual, we dualize to obtain
$C^n = \Hom(C_n, \bb{Z})$, $d^n = (d_{n+1})_*: C^{n} \to C^{n+1}$ given by $\varphi \mapsto \varphi \circ d_{n+1}$, $Z^n = \ker(d^n)$, $B^n = \im(d^{n-1})$, so that $H^n(K;\bb{Z}) = Z^n/B^n$.

By a computation identical to that of the lens spaces (e.g., \cite[Ex.~2.43]{hatcher2002algebraic}), we see that $d_3$ is the zero map and
\begin{align*}
    d_2(e_a) &= pa \\
    d_2(e_c) &= rc \\
    d_2(e_{ac}) &= q(a+c) \\
\end{align*}
This allows one to easily compute that $B_2 = 0$ and $Z_2$ (hence $H_2(K)$) is infinite cyclic generated by 
\[\frac{\lcm(p,q,r)}{q}e_{ac} - \frac{\lcm(p,r)}{p}e_a - \frac{\lcm(p,r)}{r}e_c.\]

\begin{lemma} \label{lem:shepinforder}
    Define a map $\varphi \in C^2$ by 
\begin{align*}
    \varphi(e_a) &= 0 \\
    \varphi(e_c) &= 0 \\
    \varphi(e_{ac}) &= 1. 
\end{align*}
    Then the class $[\varphi] \in H^2(K,\bb{Z})$ has infinite order.
\end{lemma}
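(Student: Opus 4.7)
The plan is to exploit the fact that the cellular boundary map $d_3$ vanishes, which implies $d^2 = 0$ and hence $Z^2 = C^2$. So $H^2(K;\mathbb{Z})$ is simply $C^2 / B^2$, and the question reduces to: for every positive integer $n$, does the cocycle $n\varphi$ fail to lie in the image of $d^1$?

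My preferred approach is to pair $\varphi$ against the generator of $H_2(K)$ already computed. By the Universal Coefficient Theorem there is a surjection $H^2(K;\mathbb{Z}) \twoheadrightarrow \Hom(H_2(K),\mathbb{Z})$ whose kernel is $\mathrm{Ext}(H_1(K),\mathbb{Z})$, a torsion group. Any cohomology class of finite order must therefore vanish on every homology class. Pair $\varphi$ with the generator
\[
    z = \frac{\lcm(p,q,r)}{q} e_{ac} - \frac{\lcm(p,r)}{p} e_a - \frac{\lcm(p,r)}{r} e_c
\]
of $H_2(K) \cong \bb{Z}$. Then directly from the definition of $\varphi$,
\[
    \varphi(z) = \frac{\lcm(p,q,r)}{q},
\]
which is a positive integer since $q$ divides $\lcm(p,q,r)$. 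Hence the image of $[\varphi]$ in $\Hom(H_2(K),\bb{Z})$ is nonzero, so $[\varphi]$ is not torsion.

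Alternatively — and essentially equivalently — one can verify the statement by a direct calculation: if $n\varphi = d^1(\psi)$ for some $\psi \in C^1$, then evaluating on $e_a$ gives $0 = p\,\psi(a)$, so $\psi(a) = 0$, and evaluating on $e_c$ gives $\psi(c) = 0$. But then evaluating on $e_{ac}$ yields $n = q(\psi(a) + \psi(c)) = 0$. This route avoids citing the UCT but uses the same input.

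There is no real obstacle here: the computations of $d_2$ and $H_2(K)$ that precede the lemma have already done the substantive work. I expect the proof to occupy just a few lines.
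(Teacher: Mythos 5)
Your primary argument is correct and is essentially the paper's own proof: the paper likewise invokes the Universal Coefficient Theorem to identify the free part of $H^2(K;\bb{Z})$ with $\Hom(H_2(K);\bb{Z})$ and observes that $\varphi$ restricts nontrivially to $Z_2 = H_2(K)$ (you simply make the pairing explicit by evaluating on the computed generator, getting $\lcm(p,q,r)/q \neq 0$). Your alternative direct coboundary computation is a valid minor variant of the same idea; either way the lemma follows.
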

\begin{proof}
Let $\rho : Z_2 \to Z_2/B_2$ denote the quotient map and $R : C^2 \to \Hom(Z^2; \bb{Z})$ denote the restriction map $\psi \mapsto \psi|_{Z_2}$. 

The restriction $\varphi|_{Z_2}$ is nontrivial, and $B_2 = 0$, so $\rho \circ R\circ \varphi$ gives a nontrivial element of $\Hom(H_2; \bb{Z})$.  
By the universal coefficient theorem, the free part of $H^2(K;\bb{Z})$ is isomorphic to $\Hom(H_2;\bb{Z})$ via the map $\psi \mapsto \rho \circ R \circ \overline \psi$ (where $\overline \psi$ is some lift of $\psi$ along the quotient map $Z^2 \to Z^2/B^2$), so it follows that the image of $\varphi$ in $H^2(K;\bb{Z})$ has infinite order.
\end{proof}

\begin{lemma} \label{lem:shepcentral}
    Let $\varphi$ be as above. Then $\sh(p,2q,r) \cong \Delta_\varphi$. In particular, $\sh(p,2q,r)$ has infinite cyclic center generated by $(st)^q$.
\end{lemma}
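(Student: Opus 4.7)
The plan is to unpack the presentation of $\Delta_\varphi$ given by~(\ref{eq:gpextpres}) and compare it, relator by relator, with the standard presentation of $\sh(p,2q,r)$. Concretely, I use the second presentation $\Delta(p,q,r) = \langle a, c \mid a^p, c^r, (ac)^q \rangle$ from the excerpt, so $S = \{a, c\}$ and the three $2$-cells $e_a, e_c, e_{ac}$ correspond to the three relators in $R$. I take $A = \bb{Z}$ with a single generator $\tau$, so $T = \{\tau\}$ and $Q = \emptyset$. Since $\varphi(e_a) = \varphi(e_c) = 0$ and $\varphi(e_{ac}) = 1$, the set $\tilde R$ gives the relations $a^p = e$, $c^r = e$, and $(ac)^q = \tau$, while $C$ gives $[a,\tau] = [c,\tau] = e$. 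Eliminating $\tau$ via $\tau = (ac)^q$ yields
\[
    \Delta_\varphi = \langle a, c \mid a^p = c^r = e,\ [a, (ac)^q] = [c, (ac)^q] = e \rangle.
\]

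Next I would show this agrees with $\sh(p,2q,r) = \langle s, t \mid s^p = t^r = e,\ (st)^q = (ts)^q \rangle$ under the identification $a \leftrightarrow s$, $c \leftrightarrow t$. The key observation is the free-group identity $s(ts)^q = (st)^q s$, which is immediate since both sides spell out the alternating word $stst\cdots sts$ of length $2q+1$. Rearranging gives $(ts)^q = s^{-1}(st)^q s$, so the braid-type relation $(st)^q = (ts)^q$ is equivalent to $[s,(st)^q] = e$; a symmetric calculation shows it is also equivalent to $[t,(st)^q] = e$. Thus (given $s^p = t^r = e$) either commutation relation forces the other, and the two presentations define the same group.

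For the center, $\tau = (ac)^q$ is central in $\Delta_\varphi$ by construction, so $(st)^q$ is central in $\sh(p,2q,r)$. The short exact sequence $0 \to \bb{Z} \to \Delta_\varphi \to \Delta(p,q,r) \to 0$ defining the central extension shows that $\tau$ has infinite order. To see that the center is exactly $\langle \tau \rangle$, I note that $\sh(p,2q,r)/\langle (st)^q \rangle \cong \Delta(p,q,r)$, so any central element descends to a central element of $\Delta(p,q,r)$. In the setting of Theorem~\ref{thm:main2gen} ($h \leq 1$), the triangle group $\Delta(p,q,r)$ embeds as a discrete, orientation-preserving group of isometries of $\bb[2]{E}$ or $\bb[2]{H}$ and therefore has trivial center, so the center of $\sh(p,2q,r)$ is precisely $\langle (st)^q \rangle$. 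The only real delicacy here is the word identity $s(ts)^q = (st)^q s$; the rest is a direct comparison of presentations together with a standard short-exact-sequence argument.
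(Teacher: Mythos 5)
Your proof is correct and follows essentially the same route as the paper: both arguments reduce to comparing the presentation of $\Delta_\varphi$ with that of $\sh(p,2q,r)$ via the free-group identity $a(ca)^q = (ac)^q a$ (you eliminate the central generator first and then show each commutator relation is equivalent to the braid-type relation, whereas the paper derives $(ac)^q=(ca)^q$ from centrality of $z$ and then discards the now-redundant commutators, but the computation is the same). The identification of the center via the centerlessness of $\Delta(p,q,r)$ for $h\leq 1$ also matches the paper's argument.
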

\begin{proof}
By the definition of $\Delta_\varphi$, we see that it has presentation
\[
    \langle\, a,c,z \mid a^p = c^r = e, (ac)^q = z, [z,a] = [z,c] = e \,\rangle.
\]
Since $z$ commutes with $a$ and $b$,
\[
    z = a^{-1}za = a^{-1}(ac)^qa = (ca)^q,
\]
so we just as well may write
\[
    \langle\, a,c,z \mid a^p = c^r = e, (ac)^q = (ca)^q = z, [z,a] = [z,c] = e \,\rangle.
\]
But then notice that 
\begin{align*}
    &(ac)^qa = a(ca)^q = a(ac)^q, \qquad \text{ and} \\
    &c(ac)^q = (ca)^qc = (cb)^qc,
\end{align*}
so the relations $[z,a] = [z,c] = 1$ are now redundant, and we may write
\[
    \langle\, a,c,z \mid a^p = c^r = e, (ac)^q = (ca)^q = z \,\rangle.
\]
But now the generator $z$ is redundant, so we arrive at 
\[
    \Delta_\varphi = \langle\, a,c\mid a^p = c^r = e, (ac)^q = (ca)^q\,\rangle \cong \sh(p,2q,r).
\]
Since $(ac)^q = z \in Z(\Delta_\varphi)$ and $\Delta_\varphi/\langle z \rangle \cong \Delta$ is centerless (because $h \leq 1$), it follows that $(ac)^q = (st)^q$ generates the center of $\sh(p,2q,r)$.
\end{proof}

Next, we examine the Shephard groups $\sh(p,q,p)$ for $q$ odd. 

\begin{lemma}
    The subgroup of $\sh(p,2q,2)$ generated by $s$ and $tst$ is index-2 and isomorphic to $\sh(p,q,p)$.
\end{lemma}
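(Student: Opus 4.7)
The plan is to set $x = s$ and $y = tst$ in $\sh(p, 2q, 2)$, exhibit a surjection $\psi \colon \sh(p, q, p) \to H := \langle s, tst \rangle$ by checking that $x, y$ satisfy the defining relations of $\sh(p, q, p)$, and then establish separately that $H$ has index $2$ and that $\psi$ is injective.

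For the surjection, the order relations $x^p = 1$ and $y^p = (tst)^p = t s^p t = t^2 = 1$ are immediate. For the braid relation $(xy)^{(q-1)/2} x = (yx)^{(q-1)/2} y$ (with $q$ odd), I would compute $xy = (st)^2$ and $yx = (ts)^2$, so the relation reduces to $(st)^{q-1} s = (ts)^{q-1} tst$. Using the elementary identity $(ts)^n t = t (st)^n$, the right-hand side simplifies to $t(st)^q$, while the left-hand side rewrites as $(st)^q t$. These coincide because $(st)^q$ is central in $\sh(p, 2q, 2)$ by Lemma \ref{lem:shepcentral}, which yields the desired surjection $\psi$.

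For the index claim I would use the sign-like homomorphism $\phi \colon \sh(p, 2q, 2) \to \mathbb{Z}/2\mathbb{Z}$ sending $s \mapsto 0$ and $t \mapsto 1$; this is well-defined since every defining relation has equal $t$-count on each side. The inclusion $H \subseteq \ker \phi$ is immediate from $\phi(s) = \phi(tst) = 0$. Conversely, any element of $\ker \phi$ admits a representative with an even number of $t$'s, which, by bracketing consecutive pairs as $t s^a t = (tst)^a = y^a$, rewrites as a word in $s$ and $tst$. Hence $\ker \phi = H$, so $[\sh(p, 2q, 2) : H] = 2$.

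For injectivity of $\psi$, I would build an explicit left inverse via a semidirect product. The defining relation of $\sh(p, q, p)$ is symmetric in $x, y$, so the swap $\tau \colon x \leftrightarrow y$ is an order-$2$ automorphism, giving $G := \sh(p, q, p) \rtimes \langle \tau \rangle$. Define $\Phi \colon \sh(p, 2q, 2) \to G$ by $s \mapsto x$ and $t \mapsto \tau$; the only nontrivial relation to verify is $(x\tau)^q = (\tau x)^q$, and expanding via $(x\tau)^2 = xy$ and $(\tau x)^2 = yx$ reduces this (for $q$ odd) to the braid relation of $\sh(p, q, p)$, so $\Phi$ is well-defined. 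Restricting $\Phi$ to $H$ sends $s \mapsto x$ and $tst \mapsto \tau x \tau^{-1} = y$, so $\Phi|_H \circ \psi$ is the identity on generators of $\sh(p, q, p)$, forcing $\psi$ to be injective and hence an isomorphism. The main technical hurdle is the braid-relation verification in paragraph two, where the centrality of $(st)^q$ and the identity $(ts)^n t = t(st)^n$ must be combined carefully; the remaining steps are then standard.
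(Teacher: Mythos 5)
Your proof is correct, and it turns on the same central object as the paper's: the semidirect product $\sh(p,q,p) \rtimes \bb{Z}/2\bb{Z}$ by the generator-swapping involution. The difference is in the architecture. The paper proves outright that this semidirect product is isomorphic to $\sh(p,2q,2)$ via $\sigma \mapsto s$, $\upsilon \mapsto t$, by a chain of Tietze transformations on the presentation; both the index-$2$ claim and the identification of $\langle s, tst\rangle$ with $\sh(p,q,p)$ then fall out of the semidirect product structure at once. You instead split the statement into three independent verifications: surjectivity of $\psi \colon \sh(p,q,p) \to \langle s, tst\rangle$ by direct relation-checking (your reduction of the length-$q$ alternating relation to the commutation of $(st)^q$ with $t$ is exactly right, and that commutation follows already from the defining relation $(st)^q = (ts)^q$, so the appeal to Lemma~\ref{lem:shepcentral} is more than you need); the index computation via the parity homomorphism to $\bb{Z}/2\bb{Z}$ together with the rewriting $ts^at = (tst)^a$; and injectivity via the retraction $\Phi \colon \sh(p,2q,2) \to \sh(p,q,p)\rtimes\bb{Z}/2\bb{Z}$, whose composition with $\psi$ is the canonical (injective) inclusion of the normal factor. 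Your route only requires checking that $\Phi$ is well defined in one direction rather than establishing a two-sided isomorphism of presentations, at the cost of the extra (but short and self-contained) kernel computation for the index; the paper's route is more economical once the Tietze manipulation is done, since it yields the full isomorphism $\sh(p,q,p)\rtimes\bb{Z}/2\bb{Z} \cong \sh(p,2q,2)$, which is also what makes the index claim immediate there.
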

\begin{proof}
First, we fix the presentations of these groups as
\begin{align*}
    \sh(p,q,p) &= \langle\,  \sigma,\tau \mid \sigma^p = \tau^p = e, \, \underbrace{\sigma\tau\sigma \ldots}_{q \text{ letters}} = \underbrace{\tau\sigma\tau\ldots}_{q \text{ letters}} \,\rangle \\
    \sh(p,2q,2) &= \langle\, s,t \mid s^p = t^2 = e, \, (st)^q = (ts)^q \,\rangle.
\end{align*}
Let the generator $\upsilon$ of $\bb{Z}/2\bb{Z}$ act on $\sh(p,q,p)$ by interchanging $\sigma$ and $\tau$. Then the semidirect product of $\sh(p,q,p)$ and $\bb{Z}/2\bb{Z}$ under this action has the presentation
\[
    \sh(p,q,p) \rtimes \bb{Z}/2\bb{Z} = 
    \langle\, \sigma, \tau, \upsilon \mid \sigma^p = \tau^p = e, \underbrace{\sigma\tau\sigma \ldots}_{q \text{ letters}} = \underbrace{\tau\sigma\tau\ldots}_{q \text{ letters}}\,,\ \sigma = \upsilon \tau \upsilon, \upsilon^2 = e \,\rangle.
\]
Since this is a semidirect product, the subgroup generated by $\sigma$ and $\tau$ is isomorphic to $\sh(p,q,p)$. We claim this product is isomorphic to $\sh(p,2q,2)$.

The relation $\sigma = \upsilon \tau \upsilon$ makes $\tau^p = e$ redundant, so we may remove it. In addition, notice that 
\begin{align*}
    \underbrace{\tau\sigma\tau \ldots}_{q \text{ letters}}
    &= (\tau\sigma)^{(q-1)/2}\tau \\
    &= (\upsilon\sigma\upsilon\sigma)^{(q-1)/2}\upsilon \sigma \upsilon \\
    &= (\upsilon\sigma)^{(q-1)}\upsilon \sigma \upsilon  \\
    &= (\upsilon\sigma)^{q} \upsilon.
\end{align*}
Similarly,
\begin{align*}
    \underbrace{\sigma\tau\sigma \ldots}_{q \text{ letters}} 
    &= (\sigma\tau)^{(q-1)/2}\sigma \\
    &= (\sigma\upsilon\sigma\upsilon)^{(q-1)/2}\sigma \\
    &= (\sigma\upsilon)^{(q-1)}\sigma.
\end{align*}
Thus the relation $\underbrace{\sigma\tau\sigma \ldots}_{q \text{ letters}} = \underbrace{\tau\sigma\tau\ldots}_{q \text{ letters}}$ is equivalent to
\[
    (\upsilon\sigma)^{q} \upsilon = (\sigma\upsilon)^{(q-1)}\sigma,
\]
which in turn is equivalent to
\[
    (\upsilon\sigma)^{q} = (\sigma\upsilon)^{(q-1)}\sigma\upsilon = (\sigma\upsilon)^q.
\]
Therefore, 
\[
    \sh(p,q,p) \rtimes \bb{Z}/2\bb{Z} = 
    \langle\, \sigma, \tau, \upsilon \mid \sigma^p = \upsilon^2 = e, (\upsilon\sigma)^{q} = (\sigma\upsilon)^q, \sigma = \upsilon \tau \upsilon \,\rangle.
\]
And with this presentation, $\tau$ is obviously redundant, so we see that 
\[
    \sh(p,q,p) \rtimes \bb{Z}/2\bb{Z} \cong \sh(p,2q,2)
\]
via the map $\sigma \mapsto s$ and $\upsilon \mapsto t$. 
\end{proof}

Since $\sh(p,q,p)$ viewed inside $\sh(p,2q,2)$ is generated by $\sigma = s$ and $\tau = tst$, we have that the element $(\sigma\tau)^q = (st)^{2q}$ is both in $Z(\sh(p,2q,2))$ and $Z(\sh(p,q,p))$, since $(st)^{2q} = (stst)^q = (\sigma\tau)^q$. 
The image of $\sh(p,q,p)$ under the quotient map $\sh(p,2q,2) \to \Delta(p,q,2)$ is the subgroup $D$ of $\Delta(p,q,2)$ generated by $a$ and $cac$. This subgroup is finite index in $\Delta(p,q,2)$ (e.g., since $\sh(p,q,p)$ is finite index in $\sh(p,2q,2)$). In particular, 

\begin{lemma} \label{lem:oddinfinite}
    $\sh(p,q,p)$ is a \bb{Z}-central extension of $D$ via the image of the class $[\varphi]$ from $H^2(\Delta;\bb{Z})$ to $H^2(D,\bb{Z})$ (which has infinite order).
\end{lemma}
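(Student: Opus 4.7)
The plan is to leverage the index-$2$ embedding $\sh(p,q,p) \hookrightarrow \sh(p,2q,2)$ from the previous lemma together with the central extension structure of $\sh(p,2q,2)$ established in Lemma \ref{lem:shepcentral}, and to transfer the cohomological information via the functoriality developed in Section \ref{sec:background}.

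First I would establish the short exact sequence
\[
    1 \longrightarrow \bb{Z} \longrightarrow \sh(p,q,p) \longrightarrow D \longrightarrow 1
\]
by restricting the projection $\pi : \sh(p,2q,2) \to \Delta(p,q,2)$ to $\sh(p,q,p)$. The image is $D$ by the discussion preceding the lemma, so the task is to identify the kernel $\sh(p,q,p) \cap \langle (st)^q\rangle$. The quotient $\sh(p,2q,2) \to \bb{Z}/2\bb{Z}$ with $s \mapsto 0$ and $t \mapsto 1$ is well defined because the relation $(st)^q = (ts)^q$ becomes $q = q$ in the target, and the preceding lemma identifies $\sh(p,q,p) = \langle s, tst\rangle$ as its kernel. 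Because $q$ is odd, $(st)^q$ maps to $1$ and hence is not in $\sh(p,q,p)$, while $(st)^{2q}$ is; so $\sh(p,q,p) \cap \langle (st)^q\rangle = \langle (st)^{2q}\rangle$, which is infinite cyclic and central in $\sh(p,q,p)$ since centrality is inherited from the ambient group.

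Next, to show the Euler class has infinite order, I would apply Proposition \ref{prop:orderofkernel} to the inclusion $\iota : D \hookrightarrow \Delta(p,q,2)$. Since $D$ is finite index in $\Delta(p,q,2)$ (as noted just before the lemma), the kernel of $\iota^* : H^2(\Delta;\bb{Z}) \to H^2(D;\bb{Z})$ is torsion; combined with Lemma \ref{lem:shepinforder}, this implies $\iota^*[\varphi]$ has infinite order.

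The main technical step, and the one I expect to require the most care, is identifying the Euler class of the extension produced above with a nonzero scalar multiple of $\iota^*[\varphi]$. To do this I would compare $\sh(p,q,p)$ with the pullback extension $\pi^{-1}(D) = D_{\iota^*\varphi}$: both are central extensions of $D$ sitting inside $\sh(p,2q,2)$, with central $\bb{Z}$-subgroups $\langle (st)^{2q}\rangle$ and $\langle (st)^q\rangle$, respectively, related by an index-$2$ inclusion. Applying Lemma \ref{lem:multoncoimpliesfinind} with $n = 2$ then yields the desired scalar relationship between the Euler class of $\sh(p,q,p)$ and $\iota^*[\varphi]$. Since a nonzero scalar multiple of a class of infinite order has infinite order, this completes the lemma. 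The bookkeeping of the two central identifications---$(st)^q$ for the pullback and $(st)^{2q}$ for $\sh(p,q,p)$---is the subtlety I expect to work out most carefully.
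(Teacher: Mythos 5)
Your first two steps are correct and in fact supply details the paper leaves implicit: the identification of $\sh(p,q,p)=\langle s,tst\rangle$ with the kernel of the homomorphism $\sh(p,2q,2)\to\bb{Z}/2\bb{Z}$ sending $s\mapsto 0$, $t\mapsto 1$, the resulting computation $\sh(p,q,p)\cap\langle (st)^q\rangle=\langle (st)^{2q}\rangle$, and the appeal to Proposition \ref{prop:orderofkernel} for the infinite order of $\iota^*[\varphi]$ all agree with the paper, which presents the lemma as an immediate consequence of the two sentences preceding it. Your write-up of these points is more careful than the original.

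The gap is in your third step. Lemma \ref{lem:multoncoimpliesfinind} is a one-way implication: from a relation between Euler classes of the form $e(\tilde H)=n\,e(\tilde G)$ it \emph{produces} a finite-index embedding. You are invoking it in the converse direction, trying to extract the scalar relation between Euler classes from the already-known index-$2$ embedding $\sh(p,q,p)\hookrightarrow\pi^{-1}(D)$, and the lemma gives no license for that. The relation should instead be computed directly: since $\sh(p,q,p)$ surjects onto $D$, choose a set-theoretic section $\sigma:D\to\pi^{-1}(D)$ with image in $\sh(p,q,p)$; the cocycle $f$ it defines for $\pi^{-1}(D)$ (with respect to the generator $(st)^q$ of the kernel) then takes values in $\langle(st)^{2q}\rangle$, i.e.\ is even, and the same section read against the kernel $\langle(st)^{2q}\rangle$ of $\sh(p,q,p)\to D$ gives the cocycle $f/2$. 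Hence $2\,e(\sh(p,q,p))=\iota^*[\varphi]$ --- note the factor of $2$ lands on the opposite side from what your proposed application of Lemma \ref{lem:multoncoimpliesfinind} with $n=2$ would suggest. Either relation would suffice for the conclusion that matters (a class whose double has infinite order has infinite order, and infinite order is all that is used of this lemma downstream), but as written the step is not justified and would, if pushed through, assert the wrong multiple.
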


Note that this restriction of $[\varphi]$ has infinite order by Proposition \ref{prop:orderofkernel}.

As a brief aside, using this central extension structure, we can show

\begin{cor} \label{cor:dihedralfiniteconjugate}
    Suppose $\sh(p,q,r)$ is infinite and $g \in \sh(p,q,r)$ has finite order. Then $g$ is conjugate to a power of one of the standard generators of $\sh(p,q,r)$.
\end{cor}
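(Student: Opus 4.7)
The plan is to exploit the central extension structure from Lemmas \ref{lem:shepcentral} and \ref{lem:oddinfinite}. When $q$ is even, $\sh(p,q,r)$ is a $\bb{Z}$-central extension of the triangle group $\Delta(p, q/2, r)$, with central generator $z = (ac)^{q/2}$ of infinite order. When $q$ is odd (so $p = r$), $\sh(p,q,p)$ sits as an index-$2$ subgroup of $\sh(p, 2q, 2)$, which itself is a $\bb{Z}$-central extension of $\Delta(p, q, 2)$. In both cases the target triangle group acts properly and cocompactly by isometries on $\bb[2]{E}$ or $\bb[2]{H}$ (since $1/p + 2/q + 1/r \leq 1$).

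Treat first the even case, and let $\pi \colon \sh(p,q,r) \to \Delta(p, q/2, r)$ be the projection. Because $\ker \pi = \langle z \rangle$ is torsion-free, a finite-order $g \ne e$ projects to a nontrivial element $\bar g$ of finite order in $\Delta(p, q/2, r)$. The classical classification of torsion in a cocompact Euclidean or Fuchsian triangle group tells us that every finite-order element fixes a vertex of the tessellation $\c T$ and hence is a conjugate of a power of one of the three rotation generators $a$, $b = (ac)^{-1}$, $c$. So $\bar g = \bar h \bar y^k \bar h^{-1}$ for some $\bar h \in \Delta(p, q/2, r)$ and $y \in \{a, b, c\}$.

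Next, I would lift: choose any $h \in \sh(p,q,r)$ mapping to $\bar h$, and note that since $z$ is central the conjugate $h y^k h^{-1}$ does not depend on this choice. Then $g = h y^k h^{-1} z^n$ for some $n \in \bb{Z}$, and for $g$ to have finite order, $y^k z^n$ must. The case $y = b = (ac)^{-1}$ is ruled out: $\langle ac, z \rangle$ is the infinite cyclic group $\langle ac \rangle$ (with $z = (ac)^{q/2}$), inside which $b^k z^n = (ac)^{-k + nq/2}$ has finite order only when it is trivial, i.e., $\bar b^k = \bar e$, forcing $g \in \ker \pi$ and thus $g = e$. For $y \in \{a, c\}$, any power of $y$ in $\ker \pi$ is trivial (since $\bar y$ has the same order as $y$ in the quotient), so $\langle y, z \rangle = \langle y \rangle \times \langle z \rangle$, and $y^k z^n$ has finite order precisely when $n = 0$. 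Thus $g = h y^k h^{-1}$, a conjugate of a power of the standard generator $a$ or $c$.

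For odd $q$, apply the even case to $g$ regarded as a finite-order element of $\sh(p, 2q, 2)$, obtaining $g = h y^k h^{-1}$ with $y \in \{s, t\}$ and $h \in \sh(p, 2q, 2)$. The image of $g$ in $\sh(p, 2q, 2)/\sh(p, q, p) \cong \bb{Z}/2\bb{Z}$ is $0$, which forces $y = s$, since a nontrivial conjugate of $t$ would map to $1$. Writing $h$ as $h'$ or $h' t$ with $h' \in \sh(p, q, p)$, we get $g = h' \sigma^k (h')^{-1}$ or $g = h' (tst)^k (h')^{-1} = h' \tau^k (h')^{-1}$, a conjugate in $\sh(p, q, p)$ of a power of the standard generator $\sigma$ or $\tau$. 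The main point requiring careful invocation is the classification of torsion in the triangle group via vertex stabilizers of $\c T$; the remaining manipulations with the central element $z$ are then routine linear algebra in the cyclic subgroups $\langle y, z \rangle$.
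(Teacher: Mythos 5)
Your proof is correct and follows essentially the same route as the paper's: pass to the central quotient (a triangle group, or a finite-index subgroup of one in the odd-$q$ case), invoke the classification of torsion in cocompact planar triangle groups, and lift back using that the infinite cyclic center meets the relevant finite cyclic subgroups trivially. If anything you are more careful than the paper, whose proof asserts directly that $\overline g$ is conjugate to a power of $a$ or $c$ without explicitly ruling out the third rotation class $b$, which you do correctly via the observation that $\langle ac, z\rangle = \langle ac\rangle \cong \bb{Z}$ forces any finite-order lift of a power of $b$ to be trivial.
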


\begin{proof}
    Let $s$ and $t$ denote the generators of order $p$ and $r$, resp., of $\sh(p,q,r)$. The central quotient $\sh(p,q,r)/Z$ of $\sh(p,q,r)$ acts properly and cocompactly on either \bb[2]{E} or \bb[2]{R}, where the generators $s$ and $t$ are sent to rotations $a$ and $c$, resp., by an angle of $2\pi/p$ and $2\pi/r$, resp. Since $g$ has finite order, it cannot be contained in the center of $\sh(p,q,r)$, and thus has non-trivial image $\overline g$ in $\sh(p,q,r)/Z$, still with finite order. 
    Then by standard facts about isometries of \bb[2]{E} and \bb[2]{H} (and in particular, facts about the triangle groups), $\overline g$ is conjugate (within $\sh(p,q,r)/Z$) to a power of either $a$ or $c$. Without loss of generality, we may assume $\overline g = \overline h^{-1} a^k \overline h$ for some $\overline h \in \sh(p,q,r)/Z$ and $0 < k < p$ (the argument for conjugates of powers of $c$ is identical). This means there is some lift $h \in \sh(p,q,r)$ of $\overline h$ such that $gZ = (h^{-1}s^k h)Z$. In particular, there is some $z \in Z$ such that $g = (h^{-1}s^k h)z = h^{-1}(zs^k)h$. If $z \not= e$, then $z$ has infinite order, thus so does $zs^k$ (since $z$ commutes with $s^k$), contradicting the assumption that $g$ has finite order. Thus $z = e$ and $g = h^{-1}s^kh$.
\end{proof}

We now prove the first part of Theorem \ref{thm:main2gen}, namely

\begin{thm}
    Let $(p,q,r)$ be a triple of integers each $\geq 2$, with $p = r$ if $q$ is odd. If $1/p + 2/q + 1/r \leq 1$, then $\sh(p,q,r)$ cannot admit a proper action by semi-simple isometries on any $\cat(0)$ space. In particular, $\sh(p,q,r)$ is not $\cat(0)$. 
\end{thm}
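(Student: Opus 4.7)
The plan is to express $\sh(p,q,r)$ as a $\bb{Z}$-central extension of a finitely generated group whose Euler class has infinite order, and then invoke Proposition \ref{prop:infinitenotcat0} to conclude. The preceding discussion has set up exactly this picture, so the proof amounts to identifying the correct extension in each case and assembling the existing lemmas.

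First I would split the argument on the parity of $q$. If $q$ is even, write $q = 2q'$, so that $\sh(p,q,r) = \sh(p, 2q', r)$. By Lemma \ref{lem:shepcentral}, this group is isomorphic to $\Delta(p,q',r)_\varphi$, a $\bb{Z}$-central extension of the finitely generated triangle group $\Delta(p,q',r)$ with Euler class $[\varphi] \in H^2(\Delta(p,q',r); \bb{Z})$. The hypothesis $1/p + 2/q + 1/r \leq 1$ rewrites as $1/p + 1/q' + 1/r \leq 1$, so $\Delta(p,q',r)$ is infinite, and Lemma \ref{lem:shepinforder} gives that $[\varphi]$ has infinite order. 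Proposition \ref{prop:infinitenotcat0} then immediately yields the desired conclusion in this case.

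If $q$ is odd, the hypothesis on the triple forces $p = r$, and the inequality becomes $1/p + 1/q + 1/2 \leq 1$, so $\Delta(p,q,2)$ is infinite. By Lemma \ref{lem:oddinfinite}, $\sh(p,q,p)$ is a $\bb{Z}$-central extension of the finite-index subgroup $D = \langle a, cac \rangle \leq \Delta(p,q,2)$, whose Euler class is the image of $[\varphi]$ under the restriction $H^2(\Delta(p,q,2); \bb{Z}) \to H^2(D; \bb{Z})$. Since $D$ has finite index, Proposition \ref{prop:orderofkernel} ensures this restriction is injective on the free part of $H^2$, so the pulled-back Euler class still has infinite order. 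Since $\sh(p,q,p)$ is itself finitely generated, Proposition \ref{prop:infinitenotcat0} again yields the conclusion.

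The core technical work has been done in the preceding subsection, namely in computing $H^2$ of the presentation complex of $\Delta(p,q,r)$ via the semiregular tiling $\c T(p,2q,r)$ and in identifying the Shephard presentations as specific central-extension presentations with nonzero Euler class. No further obstacle arises at this step; the proof is essentially the assembly of those results, with the only real bookkeeping being the reduction of the odd-$q$ case to the even case via the index-$2$ embedding $\sh(p,q,p) \hookrightarrow \sh(p,2q,2)$.
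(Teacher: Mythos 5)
Your proposal is correct and follows essentially the same route as the paper: split on the parity of $q$, use Lemma \ref{lem:shepcentral} together with Lemma \ref{lem:shepinforder} in the even case and Lemma \ref{lem:oddinfinite} together with Proposition \ref{prop:orderofkernel} in the odd case, and conclude via Proposition \ref{prop:infinitenotcat0}. The arithmetic reductions of the hypothesis in each case also match the paper's.
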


\begin{proof}
    First suppose $q$ is even, say $q = 2k$ for a positive integer $k$. 
    Then 
    \[
        \frac{1}{p} + \frac{1}{k} + \frac{1}{r} = \frac{1}{p} + \frac{2}{2k} + \frac{1}{r} = \frac{1}{p} + \frac{2}{q} + \frac{1}{r} \leq 1.
    \]
    So by Lemma \ref{lem:shepcentral}, $\sh(p,q,r) = \sh(p,2k,r)$ is a \bb{Z}-central extension of the infinite group $\Delta(p,k,r)$ via a second cohomology class (of infinite order by Lemma \ref{lem:shepinforder}) of $\Delta(p,k,r)$. Thus by Proposition \ref{prop:infinitenotcat0}, $\sh(p,q,r)$ cannot act properly by semi-simple isometries on a $\cat(0)$ space and is not $\cat(0)$.

    Now assume $q$ is odd (implying $p = r$). This means
    \[
        1 \geq \frac{1}{p} + \frac{2}{q} + \frac{1}{r} = \frac{2}{p} + \frac{2}{q},
    \]
    implying
    \[
        \frac{1}{p} + \frac{1}{q} \leq \frac{1}{2},
    \]
    and so
    \[
        \frac{1}{p} + \frac{1}{q} + \frac{1}{2} \leq 1.
    \]
    Thus $\sh(p,q,p)$ is a \bb{Z}-central extension of a finite index subgroup of $\Delta(p,q,2)$ via a second cohomology class of infinite order (Lemma \ref{lem:oddinfinite}). By Proposition \ref{prop:infinitenotcat0} cannot act properly by semi-simple isometries on a $\cat(0)$ space and is not $\cat(0)$. 
\end{proof}

\subsection{Further geometry of the extension}
\label{sec:furthergeom}

We can give insight into the geometry of the dihedral Shephard groups beyond the general fact of Proposition \ref{prop:infinitenotcat0}. Namely, we will discuss the remainder of Theorem \ref{thm:main2gen}.

To encompass both types of dihedral Shephard groups dealt with above (depending on the parity of $q$), we fix notation for this section.
First, we let $Sh = \sh(p,2q,r)$ or $Sh = \sh(p,q,p)$ (in which case we say $r = 2$). In the first case, we define $D = \Delta = \Delta(p,q,r)$, and in the second case, we define $D$ to be the subgroup of $\Delta = \Delta(p,q,2)$ generated by $a$ and $cac$. 
To summarize the results of the previous section in this notation, $Sh$ is a \bb{Z}-central extension of $D$ whose Euler class has infinite order, where $D$ acts geometrically on \bb[2]{E} or \bb[2]{H} if $h = 1$ or $h < 1$, resp., as a finite index subgroup of a triangle group $\Delta$. 

As a consequence of the latter point, $D$ contains a finite index torsion-free subgroup $M$ \cite[Thm.~2.7]{milnor19753}, and in particular $M$ must be a (closed) surface group.
By Lemma \ref{lem:imfinindimpliesliftfinind}, $M$ lifts to a finite index subgroup $M_\varphi$ of $Sh$. 
Since $M$ is a surface group, $H^2(M;\bb{Z}) \cong \bb{Z}$. Let $\widetilde M$ denote a central extension of $M$ such that $e(\widetilde M)$ is a generator of $H^2(M;\bb{Z})$. (Sometimes $\widetilde M$ is called the ``universal central extension'' of $M$, although this conflicts with the standard definition of universal central extension which applies only to perfect groups.)
When $h = 1$, $M \cong \bb[2]{Z}$ and $\widetilde M \cong H(3)$, the 3-dimensional integer Heisenberg group.
When $h < 1$, then $M$ is a hyperbolic surface group, and $\widetilde M$ is a uniform lattice in $\widetilde{\SL_2\bb{R}}$.
Since $e(\widetilde M)$ is a generator of the second cohomology, $e(M_\varphi)$ is a non-zero multiple of $e(\widetilde M)$.
By Lemma \ref{lem:multoncoimpliesfinind}, this means $M_\varphi$ is finite index in $\widetilde M$. Thus $Sh$ is commensurable to $\widetilde M$. As an immediate consequence, we have

\begin{prop} \label{prop:edgesresidfinite}
    For any triple $(p,q,r)$ of integers $\geq 2$ (with $p = r$ when $q$ is odd), the group $\sh(p,q,r)$ is linear.
\end{prop}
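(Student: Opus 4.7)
The plan is to split into the finite and infinite cases and then exploit the commensurability picture already established above.

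First, for the finite case ($1/p + 2/q + 1/r > 1$) one simply cites \cite{1975regular}, where finite dihedral Shephard groups are shown to be linear (and indeed, this is mentioned in the discussion immediately following Theorem~\ref{thm:main2gen}). So the only real work is in the infinite case, where $h = 1/p + 2/q + 1/r \leq 1$.

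In the infinite case, the discussion preceding the Proposition already exhibits $Sh = \sh(p,q,r)$ as a group commensurable with the distinguished central extension $\widetilde M$, where $M$ is either a torsion-free finite-index subgroup of $\bb Z^2$ (when $h = 1$) or of a hyperbolic surface group (when $h < 1$). So it suffices to establish (i) that $\widetilde M$ is itself linear, and (ii) that linearity is inherited by groups commensurable with $\widetilde M$.

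For (i), when $h = 1$ one has $\widetilde M \cong H(3)$, and this is linear essentially by definition since $H(3)$ is the subgroup of unipotent upper-triangular matrices in $\mathrm{GL}_3(\bb Z)$. When $h < 1$, the group $\widetilde M$ is a uniform lattice in $\isom(\widetilde{\SL_2\bb{R}})$; concretely, it is the fundamental group of a Seifert fibered $3$-manifold (for example, a finite cover of the unit tangent bundle of the hyperbolic orbifold $\bb H^2/D$). The fundamental groups of Seifert fibered $3$-manifolds are known to be linear, so $\widetilde M$ is linear in this case as well.

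For (ii), I would invoke the standard fact that a group with a finite-index linear subgroup is linear: pass to the normal core $N$ of the finite-index subgroup, which is still linear and finite-index, and then use the induced representation from $N$ to get a faithful representation of the whole group. Applying this to a common finite-index subgroup of $Sh$ and $\widetilde M$ (which is linear, being a finite-index subgroup of a linear group) gives the linearity of $Sh$. The main obstacle here is really step (i) in the hyperbolic case; everything else is formal once the commensurability result is in hand. If one prefers to avoid quoting the linearity of Seifert fibered $3$-manifold groups, one can instead realize $\widetilde M$ as the preimage of $M$ inside a suitable finite cover of $\mathrm{PSL}_2(\bb R) \times S^1$ and argue directly, but the cleanest route is to use the $3$-manifold result.
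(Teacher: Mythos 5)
Your proposal is correct and follows essentially the same route as the paper: handle the finite case by citing \cite{1975regular}, then in the infinite case reduce via the commensurability of $\sh(p,q,r)$ with $\widetilde M$ (using that linearity passes between commensurable groups) to the linearity of $H(3)$ when $h=1$ and of the universal central extension of a hyperbolic surface group when $h<1$. The only difference is the reference invoked in the hyperbolic case — you quote the linearity of Seifert fibered $3$-manifold groups, whereas the paper cites \cite[\S IV.48]{de2000topics} for an explicit embedding of $\widetilde M$ into $\SL_2\bb{R}\times H(3)$; both are valid ways to close that step.
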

\begin{proof}
    When $\sh(p,q,r)$ is finite, this was shown in \cite{1975regular}. So, suppose $\sh(p,q,r)$ is not finite, i.e., $h = 1/p + 2/q + 1/r \leq 1$.
    It is an easy exercise to see that if $H$ is a finite index subgroup of $G$, then $G$ is linear if and only if $H$ is linear. 
    Thus if two groups $G$ and $H$ are commensurable, one is linear if and only if the other is. So it suffices to note that $\widetilde M$ is always linear:
    if $h = 1$, then $\widetilde M$ is the 3-dimensional integer Heisenberg group $H(3)$ (well-known to be linear), and, if $h < 1$, then $\widetilde M$ is linear by \cite[\S IV.48]{de2000topics} (via an explicit injection from $\widetilde M$ to $\SL_2 \bb{R} \times H(3)$).
\end{proof}

We will note here that the Shephard group analogue of the Tits representation used to show finite Shephard groups are linear in \cite{1975regular} is not faithful for infinite dihedral Shephard groups. (A quick computation shows that the center of the image under this representation is always finite.) Finding an explicit representation for the Shephard groups is straightforward using the information given above, so we leave it as an exercise. The $h < 1$ Shephard groups have an interesting explicit (but non-linear) representation as isometries of $\widetilde{\SL_2\bb{R}}$, which will discuss soon. 
First we examine the ``Euclidean-like'' case of $h = 1$.

\begin{prop}
    Suppose $h = 1$. Then $Sh$ is virtually nilpotent and is not semihyperbolic.
\end{prop}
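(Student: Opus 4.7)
The plan is to exploit the commensurability of $Sh$ with $\widetilde M$ just established. When $h=1$, the torsion-free finite-index surface subgroup $M$ of $D$ is the fundamental group of a flat torus, so $M \cong \bb[2]{Z}$; the corresponding universal central extension $\widetilde M$ is then the $3$-dimensional integer Heisenberg group $H(3)$. This identification is the entire content on which the rest of the argument will rest.

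Virtual nilpotence is then immediate: a common finite-index subgroup $N$ of $Sh$ and $H(3)$ is a finite-index subgroup of the nilpotent (step $2$) group $H(3)$, hence is itself nilpotent. Thus $Sh$ contains a nilpotent subgroup of finite index, so it is virtually nilpotent.

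For the failure of semihyperbolicity I would argue by contradiction, using two facts from the literature: semihyperbolicity is inherited by finite-index subgroups and by finite-index overgroups (see Alonso--Bridson), and in particular is a commensurability invariant; and every semihyperbolic group satisfies a quadratic isoperimetric inequality. If $Sh$ were semihyperbolic, then $H(3)$ would be semihyperbolic as well, and would thus have at most quadratic Dehn function. But it is classical (going back to work of Gersten and Gromov on Dehn functions of nilpotent groups) that the Dehn function of $H(3)$ is cubic, a contradiction. Hence $Sh$ is not semihyperbolic.

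The main obstacle, such as it is, lies in matching the form of the standard facts invoked above (the commensurability invariance of semihyperbolicity, the quadratic isoperimetric inequality enjoyed by semihyperbolic groups, and the cubic Dehn function of $H(3)$) to the precise definition of semihyperbolicity in use; once these are in hand the argument is entirely formal, and no computation on $Sh$ itself is required beyond the commensurability statement.
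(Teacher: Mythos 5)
Your proposal is correct and follows essentially the same route as the paper: both rest on the commensurability of $Sh$ with the integer Heisenberg group $H(3)$, the cubic Dehn function of $H(3)$, and the fact that semihyperbolic groups satisfy a quadratic isoperimetric inequality. Your argument for virtual nilpotence (a finite-index subgroup of $Sh$ is isomorphic to a subgroup of the nilpotent group $H(3)$, hence nilpotent) is in fact slightly more elementary than the paper's appeal to quasi-isometric rigidity of virtual nilpotency, and your use of commensurability invariance of semihyperbolicity in place of commensurability invariance of the Dehn function is an equivalent bookkeeping choice.
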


\begin{proof}
    Virtual nilpotency of $Sh$ follows from the Q.I. rigidity of virtual nilpotency and the nilpotency of the 3-dimensional Heisenberg group.
    Moreover, the 3-dimensional Heisenberg group has cubic Dehn function, and thus so does $Sh$. Since semihyperbolic groups have at-most quadratic Dehn function, it follows that $Sh$ cannot be semihyperbolic.
\end{proof} 

This implies, for example, that if such a Shephard group embeds in an arbitrary Shephard group $\sh_\Gamma$, then $\sh_\Gamma$ is not semihyperbolic.

The case $h < 1$ is quite rich. For example, since the central quotient is word hyperbolic in this case, 
by \cite{neumann1997central}, this immediately implies

\begin{prop} \label{prop:negbiauto}
    If $h < 1$, then $Sh$ is biautomatic.
\end{prop}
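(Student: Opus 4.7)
The plan is to reduce the proposition to a direct application of the Neumann--Reeves theorem from \cite{neumann1997central}, which states that every $\mathbb{Z}$-central extension of a word hyperbolic group is biautomatic. So I would first observe that the hypotheses of that theorem are already assembled from Section \ref{subsec:dihedralcentral}: we have set $Sh$ up as a central extension
\[
    0 \to Z \to Sh \to D \to 0
\]
with $Z \cong \mathbb{Z}$ (generated, e.g., by $(st)^q$ when $q$ is even, or by the corresponding central element when $q$ is odd), and $D$ is realized as a finite-index subgroup of the triangle group $\Delta = \Delta(p,q,r)$ (or $\Delta(p,q,2)$).

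Next I would verify that $D$ is word hyperbolic. When $h < 1$, the triangle $T(p,q,r)$ lives in $\mathbb{H}^2$, so $\Delta$ acts properly and cocompactly by isometries on $\mathbb{H}^2$. Hence $\Delta$ is quasi-isometric to $\mathbb{H}^2$ and is therefore word hyperbolic. Since word hyperbolicity passes to finite-index subgroups and $D \leq \Delta$ has finite index, $D$ is also word hyperbolic.

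Having $D$ word hyperbolic and $Sh$ a $\mathbb{Z}$-central extension of $D$, the main theorem of \cite{neumann1997central} yields that $Sh$ is biautomatic, completing the proof.

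There is essentially no obstacle here: all the group-theoretic work was done in Section \ref{subsec:dihedralcentral}, so the proposition is a one-line invocation of the cited result after noting hyperbolicity of the base. The only subtle point worth mentioning in the write-up is that we really do need the central kernel $Z$ to be $\mathbb{Z}$ (not merely infinite cyclic as an abstract quotient), which is precisely what Lemma \ref{lem:shepcentral} and Lemma \ref{lem:oddinfinite} provide.
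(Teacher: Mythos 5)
Your proposal is correct and matches the paper's argument exactly: the paper likewise observes that the central quotient $D$ is word hyperbolic when $h<1$ (being a finite-index subgroup of a hyperbolic triangle group) and invokes \cite{neumann1997central} to conclude that the $\bb{Z}$-central extension $Sh$ is biautomatic. Your additional remarks about verifying hyperbolicity of $D$ and identifying the central $\bb{Z}$ are just the details the paper leaves implicit.
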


Past knowing that such a Shephard group is commensurable to a uniform lattice in $\widetilde{\SL_2\bb{R}}$, we can also explicitly demonstrate it as a group of isometries of $\widetilde{\SL_2\bb{R}}$. In some sense this is ``more natural'' than the linearity of Proposition \ref{prop:edgesresidfinite}, because it directly generalizes the method in which the presentation for the finite dihedral Shephard groups are derived in \cite[\S 9]{1975regular}. (The main technical difference is the fact that $1/\lcm(p,q,r) \not= 1/p + 1/q + 1/r - 1$ when $1/p + 1/q + 1/r \leq 1$, unlike in the finite case.) 

\begin{prop} \label{prop:unilattice}
    If $h < 1$, then $Sh$ is a uniform lattice in $\isom(\widetilde{\SL_2\bb{R}})$.
\end{prop}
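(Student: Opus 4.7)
The plan is to construct an explicit faithful representation $\rho\colon Sh \to \isom(\widetilde{\SL_2\bb{R}})$ whose image is discrete and cocompact, generalizing the construction for finite dihedral Shephard groups in \cite[\S 9]{1975regular}. The structural fact underlying this is that the identity component of $\isom(\widetilde{\SL_2\bb{R}})$ can be identified with
\[
G := \bigl(\widetilde{\SL_2\bb{R}} \times \bb{R}\bigr) / N,
\]
where $\widetilde{\SL_2\bb{R}}$ acts on itself by left translations, $\bb{R}$ acts by right translations along the one-parameter subgroup that lifts $SO(2) \subset \PSL_2\bb{R}$ (these commute with the left translations, hence are isometries), and $N = \{(\zeta^n, n) : n \in \bb{Z}\}$ diagonally identifies $Z(\widetilde{\SL_2\bb{R}}) = \langle \zeta \rangle \cong \bb{Z}$ with the integer lattice in $\bb{R}$. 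In particular, $G$ is a central $\bb{R}$-extension of $\PSL_2\bb{R}$, with fiber $\ker(G \to \PSL_2\bb{R}) \cong \bb{R}$ identified via $[(\zeta^k, t)] \mapsto t - k$.

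In the case $Sh = \sh(p, 2q, r)$, use the embedding $D = \Delta(p,q,r) \hookrightarrow \PSL_2\bb{R}$ as a uniform lattice with generators $a$ and $c$ acting as rotations by $2\pi/p$ and $2\pi/r$, and pick arbitrary lifts $\tilde a, \tilde c \in \widetilde{\SL_2\bb{R}}$. These satisfy $\tilde a^p = \zeta^{n_p}$, $\tilde c^r = \zeta^{n_r}$, and $(\tilde a\tilde c)^q = \zeta^{n_q}$ for integers $n_p, n_r, n_q$ depending on the lifts. Define
\[
\rho(s) = [(\tilde a,\, n_p/p)], \qquad \rho(t) = [(\tilde c,\, n_r/r)] \in G.
\]
Then $\rho(s)^p = \rho(t)^r = 1$ because $(\zeta^{n_p}, n_p), (\zeta^{n_r}, n_r) \in N$, and both $(\rho(s)\rho(t))^q$ and $(\rho(t)\rho(s))^q$ are represented by $((\tilde a\tilde c)^q,\, q(n_p/p + n_r/r))$, since $(\tilde c\tilde a)^q = \tilde a^{-1}(\tilde a\tilde c)^q \tilde a = (\tilde a\tilde c)^q$ (the element being central in $\widetilde{\SL_2\bb{R}}$). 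So $\rho$ is a well-defined homomorphism. Since the induced map $Sh/Z(Sh) \cong D \hookrightarrow \PSL_2\bb{R}$ is injective, all of injectivity, discreteness, and cocompactness of $\rho$ reduce to the analogous properties on the center $Z(Sh) = \langle (st)^q \rangle$, whose image lives in the fiber $\bb{R}$ of $G \to \PSL_2\bb{R}$.

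Under the identification of the fiber with $\bb{R}$, the element $\rho((st)^q)$ corresponds to the real number
\[
\eta \;:=\; q\bigl(\tfrac{n_p}{p} + \tfrac{n_r}{r}\bigr) - n_q,
\]
which is easily checked to be independent of the chosen lifts and equals a non-zero rational multiple of the Seifert Euler number of the orbifold circle bundle $\widetilde{\SL_2\bb{R}}/Z(\widetilde{\SL_2\bb{R}}) \to \bb[2]{H}/D$. By the Gauss--Bonnet formula for Seifert fibered orbifolds, this Euler number is a non-zero multiple of $\chi^{\mathrm{orb}}(\bb[2]{H}/D) = 1 - h \neq 0$, so $\eta \neq 0$. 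Hence $\rho(Z(Sh)) = \bb{Z} \cdot \eta$ is a uniform lattice in the fiber $\bb{R}$, and combined with $D$ being a uniform lattice in $\PSL_2\bb{R}$, the short exact sequence $1 \to \rho(Z(Sh)) \to \rho(Sh) \to D \to 1$ shows $\rho(Sh)$ is a uniform lattice in $G$, hence in $\isom(\widetilde{\SL_2\bb{R}})$ (in which $G$ has finite index). The odd case $Sh = \sh(p, q, p)$ follows by pulling the representation of $\sh(p, 2q, 2)$ back to its index-$2$ subgroup generated by $s$ and $tst$, as in the preceding lemma. The main obstacle is the non-vanishing of $\eta$, which is precisely the technical discrepancy between the infinite and finite cases hinted at in the introduction to this subsection: when $h > 1$ the analogous invariant is the non-zero quantity $1/\lcm(p,q,r)$ and the construction gives a finite group on $\bb[3]{S}$, whereas when $h \leq 1$ one must recognize $\eta$ as (a multiple of) $1 - h$ to conclude non-vanishing in the case of interest.
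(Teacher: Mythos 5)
Your proposal is correct, and at its core it uses the same structural idea as the paper: identify (a finite-index subgroup of) $\isom(\widetilde{\SL_2\bb{R}})$ with the central product of $\widetilde{\SL_2\bb{R}}$ (acting by left translations) and the group of fiber translations $\bb{R}$, amalgamated over the center $\bb{Z}$, and then place $Sh$ inside it using lifts of the rotations $a,c$ decorated by fiber translations. The execution differs: the paper first builds an auxiliary lattice $\widetilde\Delta_k = \widetilde\Delta\times_{\bb{Z}}\langle\tilde r_{1/k}\rangle$ and exhibits $\sh(p,2q,r)$ as an explicit index-$m$ subgroup by writing down a presentation $G$ of the central product $\sh(p,2q,r)\times_{\bb{Z}}\bb{Z}$ and verifying $G\cong\widetilde\Delta_k$ through two explicit mutually inverse homomorphisms, whereas you define the representation $\rho$ directly and deduce injectivity, discreteness, and cocompactness from the two short exact sequences. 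Your route is shorter and avoids the presentation bookkeeping; the paper's buys a completely self-contained verification. The one place where your argument leans on an unproved assertion is the non-vanishing of $\eta$: you invoke the identification of $\eta$ with a multiple of the Seifert Euler number and Gauss--Bonnet for orbifold circle bundles, which is true but is exactly the load-bearing step (and note $\chi^{\mathrm{orb}}(\bb[2]{H}/\Delta) = h-1$, not $1-h$, though only non-vanishing matters). This can be made elementary by using the normalized lifts from \cite[Lem.~3.1]{milnor19753} (which the paper cites): choosing $\tilde a,\tilde b,\tilde c$ with $\tilde a^p=\tilde b^q=\tilde c^r=\tilde a\tilde b\tilde c=\zeta$ gives $n_p=n_r=1$ and $n_q=q-1$, whence
\[
\eta \;=\; q\Bigl(\tfrac{1}{p}+\tfrac{1}{r}\Bigr)-(q-1)\;=\;q\Bigl(\tfrac{1}{p}+\tfrac{1}{q}+\tfrac{1}{r}-1\Bigr)\;=\;q(h-1)\neq 0,
\]
which is precisely the role played by $m = k/p+k/q+k/r-k\neq 0$ in the paper's proof. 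Two small cosmetic points: commuting with left translations does not by itself make the fiber rotations isometries (that they are isometries of the Sasaki metric is a separate, standard fact, also asserted in the paper), and the orbifold circle bundle you mean is $\PSL_2\bb{R}/D\to\bb[2]{H}/D$ rather than $\widetilde{\SL_2\bb{R}}/Z(\widetilde{\SL_2\bb{R}})\to\bb[2]{H}/D$. Neither affects the argument.
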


\begin{proof}
    Since $h < 1$ and each of $p$, $q$, and $r$ are finite, we know that $\Delta$, $D$, and $M$ are uniform lattices in $\scr G \coloneqq \PSL_2\bb{R}$. Recall that $\scr G$ is isometric to the unit tangent bundle of \bb[2]{H} (under the Sasaki metric), which itself can be thought of as a $U(1)$-bundle over \bb[2]{H}. This bundle is topologically trivial (since $\bb[2]{H}$ is contractible) but is well known to be metrically non-trivial. 
    Let $\widetilde{\scr{G}}$ ($= \widetilde{\SL_2\bb{R}}$) denote the universal cover of $\PSL_2\bb{R}$. Then $\widetilde{\scr{G}}$ is a (metrically non-trivial) \bb{R}-bundle over \bb[2]{H}.
    Note that $\widetilde M$ (as defined above) is actually the preimage of $M$ under the covering map $\widetilde{\scr G} \to \scr G$. Let $\widetilde \Delta$ denote the preimage of $\Delta$ under this covering map. 
    By \cite[Lem.~3.1]{milnor19753} this group has the presentation
    \[
        \widetilde \Delta = \langle\, \tilde a, \tilde b, \tilde c \mid \tilde a^p = \tilde b^q = \tilde c^r = \tilde a \tilde b \tilde c \,\rangle,
    \]
    where each of $\tilde a$, $\tilde b$, and $\tilde c$ are lifts of the respective rotations $a$, $b$, and $c$ to $\widetilde{\scr G}$.
     Moreover, the proof of said Lemma shows that $\tilde a \tilde b \tilde c$ generates the center of $\widetilde{\scr G}$ and the center of $\widetilde \Delta$. Note that $\widetilde \Delta$ is also a uniform lattice in $\widetilde{\scr G}$ since $\Delta$ is a uniform lattice in $\scr G$. 
    In order to display the dihedral Shephard groups as subgroups of $\isom(\widetilde{\scr G})$, we introduce another class of isometries.

    For $\theta \in \bb{R}$, define a map $r_\theta$ which acts on $\scr G$ by preserving the $U(1)$-fiber structure over $\bb[2]{H}$, such that $r_\theta$ projects down to the identity map of $\bb[2]{H}$ and rotates each fiber by $2\pi \theta$. 
    Since the bundle is topologically trivial, there is no issue with the existence and well-defined-ness of this map. 
    Moreover, it is clear that this map is an isometry for any $\theta$. We can also see that each $r_\theta$ commutes with the action of $\scr G$. 
    The group of all $r_\theta$ is isomorphic to $U(1)$. Each $r_\theta$ can be lifted to a map $\tilde r_\theta$ of $\widetilde{\scr G}$ which translates along the \bb{R}-fibers a common distance $2\pi\theta$. Note that this action commutes with the left action of $\widetilde{\scr G}$. 
    Since $r_\theta$ is an isometry of $G$, it follows that $\tilde r_\theta$ is an isometry of $\widetilde{\scr G}$. Let $\scr R = \{\, \tilde r_\theta : \theta \in \bb{R} \,\}$. As a straightforward exercise, one may verify that $\scr R$ along with the left-multiplication maps of $\widetilde{\scr G}$ generate the entirety of $\isom(\widetilde{\scr G})$.
    Since the elements of $\scr R$ commute with the elements of $\widetilde{\scr G}$ (and vice versa), this means $\isom(\widetilde{\scr G}) = \scr R\widetilde{\scr G}$. But note that $\widetilde{\scr G} \cap \scr R = \{\,\tilde r_\theta : \theta \in \bb{Z}\,\} \cong \bb{Z}$; so, by Proposition \ref{prop:internalcentralproduct}, $\isom(\widetilde{\scr G}) \cong \widetilde{\scr G} \times_{\bb{Z}} \scr R$.

    Now let $k = \mathrm{lcm}(p,q,r)$.
    Define $\widetilde \Delta_k$ to be the subgroup of $\isom(\widetilde{\scr G})$ generated by $\tilde a$, $\tilde b$, $\tilde c$, and $z \coloneqq \tilde r_{1/k}$. Since $\tilde r_{1/k}$ commutes with the left multiplication action of $\widetilde{\scr G}$, this group has the presentation
    \[
        \widetilde \Delta_k = \langle\, \tilde a, \tilde b, \tilde c, z \mid \tilde a^p = \tilde b^q = \tilde c^r = \tilde a \tilde b \tilde c = z^k, [\tilde a, z] =  [\tilde b, z] =  [\tilde c, z] = e \,\rangle.
    \]
    Note that this is isomorphic to a central product $\widetilde \Delta \times_{\bb{Z}} \langle \tilde r_{1/k} \rangle$.
    Since $\widetilde \Delta$ is a uniform lattice in $\widetilde{\scr G}$ and $\langle \tilde r_{1/k}\rangle$ is a uniform lattice in $\scr R$, it follows easily that $\widetilde \Delta_k$ is a uniform lattice in $\widetilde{\scr G} \times_{\bb{Z}} \scr R \cong \isom(\widetilde{\scr G})$.
    We claim that $\sh(p,2q,r)$ is isomorphic to a finite index subgroup of $\widetilde \Delta_k$, hence is a uniform lattice in
     $\isom(\widetilde{\scr G})$ as well. Since $\sh(p,q,p)$ is finite index in $\sh(p,2q,2)$, the result follows for these groups as well. 

    Let $m = \frac{k}{p} + \frac{k}{q} + \frac{k}{r} - k$, and consider the group
    \begin{equation}
        G = \langle\, s,t,\phi \mid s^p = t^r = e, (st)^{q} = (ts)^{q} = \phi^{-qm}, [s,\phi] = [t,\phi] = e \,\rangle. \label{2gen:eqn:defDelta}
    \end{equation} 
    Since $1/p + 1/q + 1/r = h \not= 1$, we know $m \not= 0$.
    Then $G$ is an amalgamated direct product of $\sh(p,2q,r)$ and $\langle \phi \rangle \cong \bb{Z}$ along the subgroup $\bb{Z} \cong \langle (st)^q \rangle \cong \langle \phi^{-qm} \rangle$. By Proposition \ref{prop:factofdirectembed},
    the subgroup of $G$ generated by $s$ and $t$ is isomorphic to $\sh(p,2q,r)$, and by Proposition \ref{prop:usefulcounting}, the index of this subgroup in $G$ is $[\langle \phi \rangle : \langle \phi^m \rangle] = m < \infty$. We now show that $G \cong \widetilde \Delta_k$
    
    We start by adding a redundant generator $u = (\phi^mts)^{-1}$ to $G$ to obtain the presentation
    \begin{align*}
        \langle\, s,t, u, \phi \mid s^p = t^r = e, (st)^{q} = (ts)^{q} = \phi^{-qm}, [s,\phi] = [t,\phi] = e, u = (\phi^mts)^{-1} \,\rangle.
    \end{align*}
    Define $\Phi : \widetilde \Delta_k \to G$ by 
    \begin{align*}
        \tilde a &\mapsto \phi^{k/p}s \\
        \tilde b &\mapsto \phi^{k/q}u \\
        \tilde c &\mapsto \phi^{k/r}t \\
        z &\mapsto \phi,
    \end{align*}
    then define $\Psi : G \to \widetilde \Delta_k$ by
    \begin{align*}
        s &\mapsto z^{-k/p}\tilde a \\
        u &\mapsto z^{-k/q}\tilde b \\
        t &\mapsto z^{-k/r}\tilde c \\
        \phi &\mapsto z \\
    \end{align*}
    We will show that $\Phi$ and $\Psi$ define surjective homomorphisms. Once this is shown, then clearly $\Phi$ and $\Psi$ are mutually inverse, and thus the proof of the Proposition is complete. 

    Starting with $\Phi$, we must show 
    \begin{align*}
        \Phi(\tilde a)^p = \Phi(\tilde b)^q = \Phi(\tilde c)^r = \Phi(\tilde a)\Phi(\tilde b)\Phi(\tilde c) = \Phi(z)^k
    \end{align*}
    and $\Phi(z)$ commutes with each of $\Phi(\tilde a)$, $\Phi(\tilde b)$, and $\Phi(\tilde c)$.
    First, since $\Phi(z) = \phi$ and $\phi$ is in the center of $G$, the latter relation holds. We verify
    \begin{align*}
        \Phi(\tilde a)^p 
        &= (\phi^{k/p}s)^p \\
        &= \phi^{k}s^p \\
        &= \phi^{k} \\
        &= \Phi(z)^{k},
    \end{align*}
    with an identical result for $\Phi(c)^r$. Next,
    \begin{align*}
        \Phi(\tilde b)^q 
        &= (\phi^{k/q}u)^q \\
        &= \phi^{k}u^q \\
        &= \phi^{k}(\phi^mts)^{-q} \\
        &= \phi^{k}\phi^{-qm}(ts)^{-q} \\
        &= \phi^{k}(ts)^q(ts)^{-q} \\
        &= \Phi(z)^{k}.
    \end{align*}
    Last,
    \begin{align*}
        \Phi(\tilde a)\Phi(\tilde b)\Phi(\tilde c)
        &=( \phi^{k/p}s )(\phi^{k/q}u )( \phi^{k/r}t) \\
        &= \phi^{k/p+k/q+k/r}sut\\
        &= \phi^{k/p+k/q+k/r}s(\phi^mts)^{-1}t \\
        &= \phi^{k/p+k/q+k/r-m}ss^{-1}t^{-1}t \\
        &= \phi^{k/p+k/q+k/r-(k/p + k/q + k/r - k)} \\
        &= \phi^k \\
        &= \Phi(z)^k.
    \end{align*}

    In order to show $\Psi$ is a surjective morphism, we must show
    \[
        \Psi(s)^p = \Psi(t)^r = e,\]
        \[(\Psi(s)\Psi(t))^{q} = (\Psi(t)\Psi(s))^{q} = \Psi(\phi)^{-qm},\]
    \[\Psi(u) = (\Psi(\phi)^m\Psi(t)\Psi(s))^{-1},\]
    and $\Psi(\phi)$ commutes with $\Psi(s)$ and $\Psi(t)$.
    Since $\Psi(\phi) = z$, which is in the center of $\widetilde \Delta_k$, this last relation is immediate. 
    We begin by computing
    \begin{align*}
        \Psi(s)^p
        &= (z^{-k/p}\tilde a)^p \\
        &= z^{-k}\tilde a^p \\
        &= z^{-k}z^k \\
        &= e,
    \end{align*}
    with an identical computation for $\Psi(t)^r$.
    Before proceeding, we need a lemma regarding the relations in $\widetilde \Delta_k$:
    \begin{lemma*}
        The relations $\tilde a\tilde b\tilde c = \tilde b\tilde c\tilde a = \tilde c\tilde a\tilde b$ hold in $\widetilde \Delta_k$.
    \end{lemma*}
    \begin{proof}[Proof (of Lemma)]
        Since $\tilde a\tilde b\tilde c = z^k$ and $z$ is in the center of $\widetilde \Delta_k$, we have
        \begin{align*}
            \tilde b = \tilde a^{-1}z^k\tilde c^{-1} = z^k\tilde a^{-1}\tilde c^{-1} = \tilde a^{-1}\tilde c^{-1}z^k.
        \end{align*}
        Solving each equation for $z^k$ gives $z^k = \tilde a\tilde b\tilde c = \tilde b\tilde c\tilde a = \tilde c\tilde a\tilde b$.
    \end{proof}
    For notational convenience, let $r_1 = \Psi(s)$, $r_2 = \Psi(u)$, and $r_3 = \Psi(t)$. The above lemma implies that $r_1r_2r_3 = r_2r_3r_1 = r_3r_1r_2$, and in particular $r_2$ commutes with the product $r_3r_1$. Moreover, we know $r_2$ has order $q$, since $(z^{-k/q}\tilde b)^q = z^{-k}\tilde b^q = z^{-k}z^k = 1$.
    Now,
    \begin{align*}
        (\Psi(s)\Psi(t))^{q} 
        &= (r_1r_3)^q \\ 
        &= r_1 (r_3r_1)^q r_1^{-1} \\ 
        &= r_1 r_2^q(r_3r_1)^q r_1^{-1} \\ 
        &= r_1 (r_2r_3r_1)^q r_1^{-1} \\ 
        &= (r_1r_2r_3)^q  \tag{$\ast$} \\ 
        &= (r_2r_3r_1)^q  \\ 
        &= r_2^q(r_3r_1)^q  \\ 
        &= (r_3r_1)^q \\
        &= (\Psi(t)\Psi(s))^q
    \end{align*}
    Moreover, using ($\ast$) we compute
    \begin{align*}
        (\Psi(s)\Psi(t))^{q} 
        &= (r_1r_2r_3)^q \\
        &= (z^{-k/p}\tilde az^{-k/q}\tilde b z^{-k/r}\tilde c)^q\\
        &= (z^{-k/p-k/q-k/r}\tilde a\tilde b\tilde c)^q\\
        &= (z^{-k/p-k/q-k/r}z^k)^q\\
        &= (z^{-k/p-k/q-k/r+k})^q\\
        &= (z^{-m})^q\\
        &= z^{-mq} \\
        &= \Psi(\phi)^{-mq}.
    \end{align*}
    Last,
    \begin{align*}
        (\Psi(\phi)^m\Psi(t)\Psi(s))^{-1}
        &= [z^m ( z^{-k/r}\tilde c)( z^{-k/p}\tilde a )]^{-1} \\
        &= z^{k/r+k/p-m} \tilde a^{-1}\tilde c^{-1}\\
        &= z^{k - k/q} \tilde a^{-1}\tilde c^{-1}\\
        &= z^{ - k/q} z^{k}\tilde a^{-1}\tilde c^{-1}\\
        &= z^{ - k/q} (\tilde b\tilde c\tilde a)\tilde a^{-1}\tilde c^{-1}\\
        &= z^{ - k/q} \tilde b\\
        &= \Psi(u). \qedhere
    \end{align*}
\end{proof}

\section{The syllable length condition}
\label{sec:syllable}

We now turn our attention to proving Theorem \ref{thm:cocptaction}. To do this, we follow the overarching idea used to show that the Deligne complex for a 2-dimensional Artin group is $\cat(0)$ \cite{charney1995k}. The first step in this process is to show that certain words have a minimal length. We make this precise now.

\begin{defn}
    Let $S$ be a finite set and $W(S)$ the set of (finite) words in $S \cup S^{-1}$. 
    If $w \in W(S)$ with $w = s_1^{i_1}\ldots s_n^{i_n}$ ($s_j \in S$, $i_j \in \bb{Z}$) is cyclically reduced, then we define the \emph{syllable length} of $w$ with respect to $S$ to be $\ell(w) = \ell_S(w) = n$.
\end{defn}

\begin{defn}
    If $G$ is a group with finite generating set $S$, and $w \in W(S)$, then we denote by $\overline w \in G$ the image of $w$ under the map induced by the obvious map sending the word $s_1^{i_1}\ldots s_n^{i_n}$ in $W(S)$ to the element $s_1^{i_1}\ldots s_n^{i_n}$ in $G$.
\end{defn}

The main result of this section is the following proposition, which, as mentioned before, is one of the key steps in showing the analogue of the Deligne complex for Shephard groups is also $\cat(0)$. It is based on a result of Appel and Schupp \cite{appel1983artin}, but requires a minor extra hypothesis in order to account for the torsion in the generators. 

\begin{prop} \label{prop:girth}
    Consider the dihedral Shephard group $\sh(p,q,r)$ (with $p = r$ if $q$ is odd) on standard generating set $S = \{s,t\}$ and identity element $e$. Suppose $w \in W(S)$ has a cyclically reduced expression $w = s_1^{i_1}\ldots s_n^{i_n}$, with $i_j \not\in  \bb{Z}p_{s_j}$
    (where $p_s = p$ and $p_t = r$).
    If $\overline w = e$, then $\ell(w) \geq 2q$.
\end{prop}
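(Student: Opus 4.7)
The plan is to adapt the small-cancellation / van Kampen diagram argument for dihedral Artin groups due to Appel and Schupp, with extra care to account for the torsion relators $s^p$ and $t^r$.

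Assume for contradiction that there is a cyclically reduced $w = s_1^{i_1}\cdots s_n^{i_n}$ with $\overline{w} = e$, each $i_j \not\in p_{s_j}\bb{Z}$, and $n < 2q$. Among all such $w$, choose one that bounds a reduced van Kampen diagram $\Sigma$ of minimal area over the standard presentation $\langle\, s,t \mid s^p,\, t^r,\, \mathrm{prod}(s,t;q)\,\mathrm{prod}(t,s;q)^{-1} \,\rangle$ of $\sh(p,q,r)$. The 2-cells of $\Sigma$ come in three types: \emph{$s$-cells} ($p$-gons with all edges labeled $s$), \emph{$t$-cells} ($r$-gons with all edges labeled $t$), and \emph{braid cells} ($2q$-gons with alternating $s$- and $t$-labels). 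A single braid cell already has boundary of syllable length exactly $2q$, so if $\Sigma$ consists of one cell the result follows; hence we may assume $\Sigma$ has at least two cells.

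The syllable hypothesis $i_j \not\in p_{s_j}\bb{Z}$ enters as follows. An $s$-cell meeting $\partial \Sigma$ along its full boundary would contribute an $s^p$ subword to $w$, forcing either a syllable exponent divisible by $p$ (contradiction) or a cancellation with an adjacent syllable (contradicting cyclic reducedness). Hence each boundary $s$-cell meets $\partial \Sigma$ in a proper arc of length strictly less than $p$, and likewise for $t$-cells. Combined with reducedness (which forbids two adjacent same-type cells sharing a mirror side), this forces any maximal ``run'' of torsion cells incident to $\partial \Sigma$ to be flanked on each end by braid cells.

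A Lyndon-style combinatorial Gauss--Bonnet count (or equivalently a Strebel classification of reduced diagrams) then bounds the syllable length of $\partial \Sigma$ from below by $2q$: each braid cell contributes $2q$ boundary syllables, minus overlap ``pieces'' shared with neighbors, and these pieces are strictly shorter than full syllables by the minimality of $\Sigma$ and the syllable hypothesis. The main obstacle will be the case where torsion cells mediate between braid cells: a single chain of torsion cells can connect a boundary piece of one braid cell to that of another into what appears as one long syllable on $\partial \Sigma$. Handling this requires showing, at each interior vertex of $\Sigma$, that any piece shared between a braid cell and an adjacent chain of torsion cells still has syllable length strictly less than $q$, which is precisely where the non-full-torsion hypothesis on the exponents $i_j$ is essential.
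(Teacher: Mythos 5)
Your proposal is a plan rather than a proof: the entire content of the proposition is concentrated in the ``Lyndon-style combinatorial Gauss--Bonnet count'' that you assert but never carry out, and you yourself flag the decisive case as unresolved (``The main obstacle will be\dots Handling this requires showing\dots''). That case --- chains of torsion cells mediating between braid cells so that boundary pieces of two different braid cells merge into one long apparent syllable of $\partial\Sigma$ --- is precisely where the statement could fail and where all the work lies. It is not a routine verification: for $1/p+2/q+1/r=1$ the group is virtually nilpotent (commensurable to the integer Heisenberg group), so no strictly negatively curved small-cancellation condition can hold for this presentation, the curvature budget in the interior is exactly balanced, and the $\geq 2q$ bound must be extracted from a delicate accounting of boundary turning that you have not performed. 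There is also a local error: an $s$-cell whose full boundary lies on $\partial\Sigma$ contributes a syllable with $|i_j|\geq p$, not $p\mid i_j$, so it does not directly contradict the hypothesis $i_j\notin p\bb{Z}$; you would instead need to remove such a cell and invoke minimality of area over \emph{all} counterexample words (checking that the modified word is still a cyclically reduced counterexample). As written, the argument does not establish the proposition.

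For comparison, the paper avoids diagram combinatorics entirely. It reads the word as an edge path $\gamma$ in the coset complex $\widehat\Theta$ of $\sh(p,q,r)$ and projects to the coset geometry $\mathcal D(p,q,r)$ of the central quotient, which is a tiling of $\bb[2]{E}$ or $\bb[2]{H}$ by $q$-gons; an embedded closed loop there encloses a $q$-gon and hence has length $\geq q$. The crux --- ruling out that the first embedded subloop already closes up in $\widehat\Theta$, which would cap the bound at $q$ --- is handled not by a curvature count but by the central-extension structure: the subword $w_0$ is trivial in the triangle group, so it bounds a $2$-chain on which the cocycle $\varphi$ (which counts enclosed $2q$-gon cells with orientation) is nonzero, forcing $\overline{w_0}\neq e$ in the Shephard group and yielding a second subloop of length $\geq q$. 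This cohomological step is exactly the ingredient your sketch is missing, and I do not see how to recover it from piece-length estimates alone.
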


We will first establish notation and some brief lemmas.

\begin{defn}
    Let $(p,q,r)$ be a triple of integers all $\geq 2$ with $p = r$ if $q$ is odd. Let $Sh = \sh(p,q,r)$. 
    If $q$ is even, let $D = D(p,q,r) = \Delta(p,q/2,r)$, $\sigma = a$, and $\tau = c$, and if $q$ is odd, let $D = D(p,q,r)$ denote the subgroup of $\Delta(p,q,2)$ generated by $\sigma = a$ and $\tau = cac$. 
    Define a simplicial graph $\mathcal D = \mathcal D(p,q,r)$ whose vertices are the cosets of $\langle \sigma \rangle$ and $\langle \tau \rangle$ in $D$, with an edge between two vertices if the cosets have non-trivial intersection.
\end{defn}

Sometimes $\mathcal D$ is called a ``(rank 2) coset geometry''. It is the quotient of the complex $\widehat \Theta = \widehat \Theta(\sh(p,q,r))$ by the center of $\sh(p,q,r)$ (see Proposition \ref{prop:sphericalcplx}). 

\begin{lemma}
    For any triple $(p,q,r)$ (with $p,q,r \geq 2$ and $p = r$ if $q$ is odd), the complex $\mathcal D(p,q,r)$ is (the 1-skeleton of) a tiling of either \bb[2]{E} or \bb[2]{H} by $q$-gons. 
\end{lemma}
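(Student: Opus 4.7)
The plan is to realize $\c D$ as the $1$-skeleton of a natural geometric tiling of $\bb[2]{Y}$ (where $\bb[2]{Y} = \bb[2]{E}$ if $h = 1$ and $\bb[2]{Y} = \bb[2]{H}$ if $h < 1$) arising from $D$'s action as a von Dyck triangle group. First, recall that $D$ acts properly discontinuously and cocompactly on $\bb[2]{Y}$: directly via $D = \Delta(p, q/2, r)$ when $q$ is even, and by inheriting the action from $\Delta(p, q, 2)$ when $q$ is odd. Fix a fundamental reflection triangle $T \subseteq \bb[2]{Y}$ with vertices $o_\sigma$, $o_b$, $o_\tau$, namely the rotation centers of $\sigma$, the third (triangle-group) generator, and $\tau$. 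Then identify the vertex set $V(\c D) = D/\langle\sigma\rangle \sqcup D/\langle\tau\rangle$ with the orbits $D \cdot o_\sigma$ and $D \cdot o_\tau$ via $g\langle\sigma\rangle \mapsto g \cdot o_\sigma$ (and analogously for $\tau$); this is well-defined since $\mathrm{Stab}_D(o_\sigma) = \langle\sigma\rangle$.

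Next, each edge of $\c D$, witnessed by an element in the intersection of two cosets, is realized as the corresponding $D$-translate of the geodesic segment $\overline{o_\sigma o_\tau}$---the edge of $T$ opposite $o_b$---embedding $\c D$ in $\bb[2]{Y}$ as a collection of geodesic segments. The key claim is that the complement $\bb[2]{Y} \setminus \c D$ decomposes into a disjoint union of geodesic $q$-gons, one for each point in the $D$-orbit of $o_b$: the reflection triangles meeting at such a point contribute their ``outer'' edges (opposite $o_b$) to $\c D$, and these concatenate end-to-end to bound a $q$-gon containing that $o_b$-orbit point in its interior, with vertices alternating $\sigma$-type and $\tau$-type along the boundary.

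Verifying the tiling property is then an angle/Gauss--Bonnet computation: at each $\sigma$-vertex, the $p$ incident $q$-gons have interior angles $2\pi/p$, summing to $2\pi$, and similarly $r$ such angles at each $\tau$-vertex; the sum of interior angles of a single $q$-gon is $q\pi(1/p + 1/r)$, which equals $(q-2)\pi$ exactly when $h = 1$ (yielding a Euclidean tiling) and is strictly less when $h < 1$ (giving a hyperbolic one). The main obstacle will be the $q$-odd case: $\c D$ is bipartite and so cannot directly host odd-length cycles, so one must carefully track how the two $D$-orbits $D \cdot o_\sigma$ and $D \cdot o_\tau$ sit relative to the underlying $\{q,p\}$-tiling of $\bb[2]{Y}$, using the specific structure of $D$ as the subgroup of $\Delta(p, q, 2)$ generated by $a$ and $cac$.
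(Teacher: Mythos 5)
Your treatment of the even case is essentially correct and self-contained: identifying the two vertex families with the orbits of the rotation centers $o_\sigma$ and $o_\tau$, the edges with $D$-translates of the segment from $o_\sigma$ to $o_\tau$, and the faces with the $q$-gons surrounding the orbit of $o_b$, and then checking the angle sums ($p$ angles of $2\pi/p$ at each $\sigma$-vertex, and $q\pi(1/p+1/r)=(q-2)\pi$ exactly when $h=1$) is a legitimate and more explicit route than the paper's, which simply cites the literature for this case.

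The odd case, however, is a genuine gap rather than a verification you may defer. You correctly notice that $\mathcal D$ is bipartite and hence has no odd cycles, but the ``careful tracking'' you postpone cannot be carried out inside your framework, for the following concrete reason. When $q$ is odd one has $c\in D=\langle a, cac\rangle$: from $(ac)^q=e$ we get $ca^{-1}=(ac)^{q-1}=\bigl(a\cdot(cac)\bigr)^{(q-1)/2}=(\sigma\tau)^{(q-1)/2}\in D$, so $c\in D$ and in fact $D=\Delta(p,q,2)$. Consequently $o_\tau=c\cdot o_\sigma$ lies in $D\cdot o_\sigma$, the two orbits $D\cdot o_\sigma$ and $D\cdot o_\tau$ coincide, and your proposed identification sends the two disjoint coset families onto the \emph{same} set of points of \bb[2]{Y}; the resulting map from $\mathcal D$ to \bb[2]{Y} is two-to-one on vertices, not an embedding. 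Since a bipartite graph contains no cycle of odd length, $\mathcal D$ cannot bound $q$-gons for odd $q$ in the naive sense, so any correct argument must first say in what sense the two vertex families are to be matched with the geometry (this is exactly where the paper works instead with $\mathcal D(p,2q,2)$ and a subdivision/un-subdivision of its $2q$-gon tiling). Your proposal names the obstruction but supplies neither that reduction nor a substitute for it, so as written it proves the lemma only for even $q$.
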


\begin{proof}
    The case when $q$ is even is proven in \cite{MorenoStypa}, so assume $q$ is odd and $D = \langle a, cac \rangle \leq \Delta(p,q,2)$. 
    Note that the result holds for $\mathcal D(p,2q,2)$ (since $2q$ is even and $D(p,2q,2) = \Delta(p,q,2)$) and $D(p,q,r)$ is a finite index subgroup of $D(p,2q,2)$ (by definition).
    In particular, $\mathcal D(p,2q,2)$ is a subdivision of $\mathcal D(p,q,p)$; the added vertices come from adding cosets of $c$, which correspond to midpoints of edges of $\mathcal D(p,q,p)$. In particular, since $\mathcal D(p,2q,2)$ consists of $2q$-gons and is the first barycentric subdivision of $\mathcal D(p,q,p)$, it follows that $\mathcal D(p,q,p)$ is a tiling by $q$-gons.
\end{proof}

\begin{figure}[h!]
    \centering
    \begin{multicols}{2}
        \includegraphics[width=0.45\textwidth]{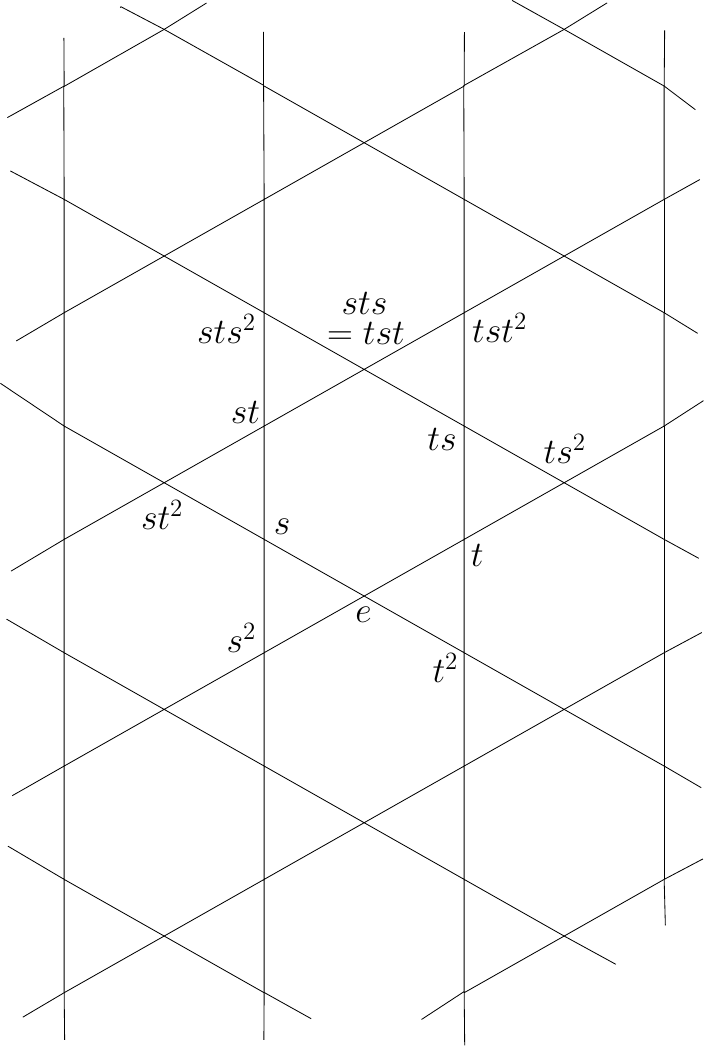}

        (a)

        \vspace{2.5em}

        \includegraphics[width=0.45\textwidth]{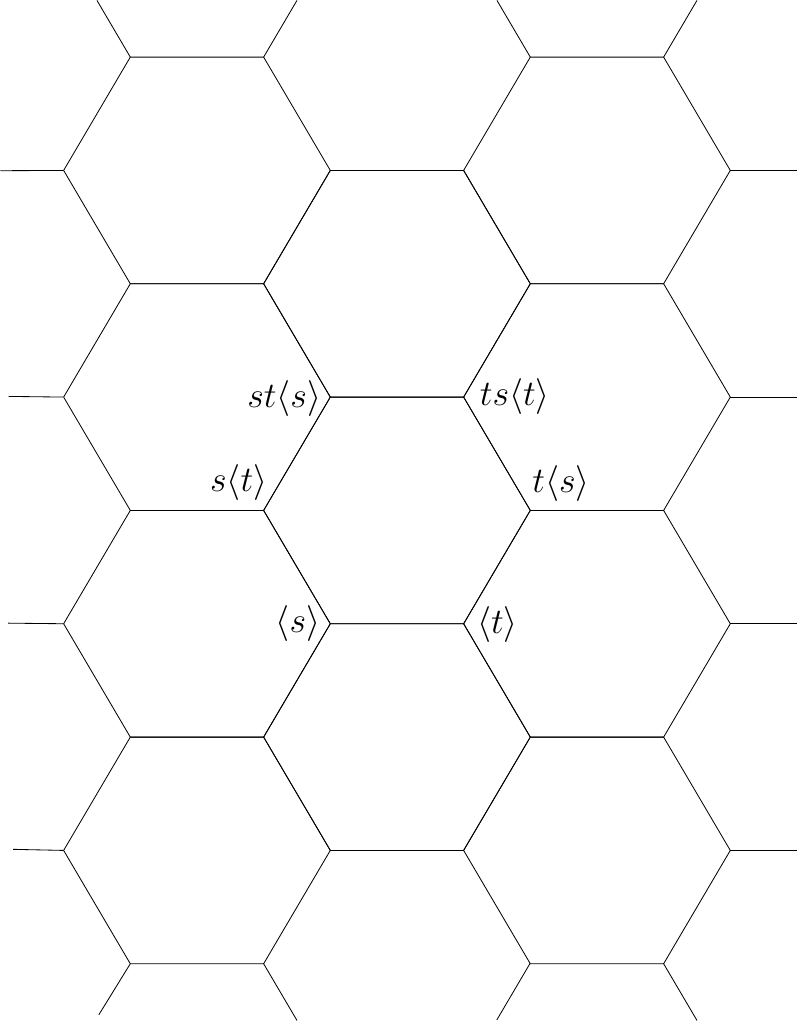}

        (b)
    \end{multicols}
    \caption{The Cayley graph (a) and coset geometry (b) for $\Delta(3,3,3)$}
    \label{fig:cayleypluscoset}
\end{figure}

The graph $\mathcal D(p,q,r)$ can be thought of as ``collapsing'' the polygons in the Cayley graph of $\Delta$ corresponding to the conjugates of the subgroups $\langle s \rangle$ and $\langle t \rangle$. For example, the Cayley graph and coset geometry of $\Delta(3,3,3)$ (coming from $\sh(3,6,3)$) are shown in Figure \ref{fig:cayleypluscoset}. They are overlaid in Figure \ref{fig:cayleycosetoverlaid} to demonstrate how the triangles induced by the orbit of $s$ and $t$ can be shrunken to their respective cosets (where these cosets are given by solid and empty vertices, respectively).

\begin{figure}[h!]
    \centering
    \includegraphics[width=0.5\textwidth]{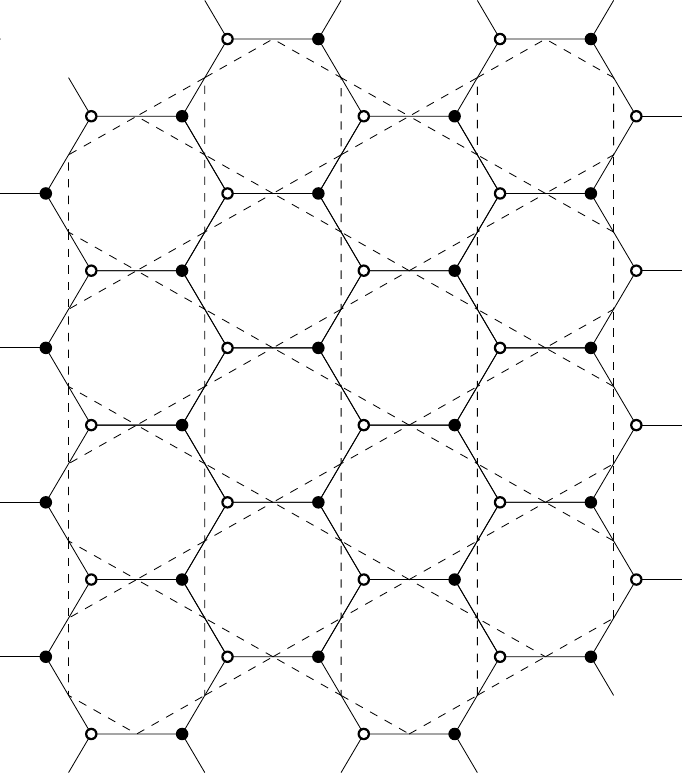}
    \caption{The Cayley graph (dashed) and coset geometry (solid) of $\Delta(3,3,3)$ overlaid}
    \label{fig:cayleycosetoverlaid}
\end{figure}

We may now prove the main Proposition of this section. The argument is based on one given in \cite[Lemma 39]{crisp2005automorphisms}.

\begin{proof}[Proof (of Prop.~\ref{prop:girth})]
    Let $E$ denote the edge of $\widehat \Theta$ coming from the intersection of the cosets $\langle s \rangle$ and $\langle t \rangle$ in $Sh$. 
    The word $w$ gives rise to a path $\gamma$ in $\widehat \Theta$ which is the concatenation $E_1E_2\cdots E_n$ of the edges $E_j$ given by
    $E_j =s_1^{i_1} s_2^{i_2}\cdots s_{j-1}^{i_{j-1}}E_{j-1}.$
     Since $i_j$ is not a multiple of $p_{s_j}$, 
    every pair of consecutive edges in this list are distinct. 
    In addition, notice that $E_n = \overline w E$. 
    So, $\gamma$ is a locally embedded closed loop, and in particular, the edge length $\ell(\gamma) = n$ of $\gamma$ is precisely the syllable length $\ell(w)$ of $w$. Without loss of generality, we may assume that this loop is embedded; otherwise, we may repeat the argument on embedded subloops. In addition, we may assume without loss of generality that $0 < i_j < p_{s_j}$ by replacing $i_j$ with its remainder after division by $p_{s_j}$; clearly this gives the same word in $Sh$ and same path in $\widehat \Theta$. In particular, this assumption does not change the syllable length.

    Let $\overline \gamma$ be the image of $\gamma$ under the covering map $\widehat \Theta \to \mathcal D$ induced by the central quotient. 
    This is still a closed loop in $\mathcal D$, but now may no longer be embedded. 
    However, can find a subpath of $\overline \gamma$ which is an embedded closed loop. After reparameterization, we write $\overline \gamma$ as the concatenation $\overline \gamma_0 \overline\gamma_1$, with $\overline\gamma_0$ an embedded closed loop in $\mathcal D$ and $\overline\gamma_1$ not necessarily embedded, possibly trivial. 
    Let $\gamma_0$ be the lift of $\overline\gamma_0$ to $\widehat \Theta$ contained in $\gamma$ and let $\gamma_1$ be the (possibly trivial) path in $\widehat \Theta$ such that $\gamma = \gamma_0\gamma_1$. 
    Then $\gamma_0$ and $\gamma_1$ represent subwords of $w$ of the form 
    $w_0 = s_1^{i_1}s_2^{i_2}\cdots s_{j}^{i_{j}}$ and
    $w_1 = s_{j+1}^{i_{j+1}}s_{j+2}^{i_{j+2}}\cdots s_{n}^{i_n}$ for some $1 \leq j \leq n$.
    In particular, $\ell(w) = \ell(\gamma) = \ell(\gamma_0) + \ell(\gamma_1)$. 
    
    Since $\overline \gamma_0$ is a non-trivial embedded loop in $\mathcal D$, it must enclose at least one $q$-gon, implying $\ell(\gamma_0) = \ell(\overline \gamma_0) \geq q$.
    We claim that $\gamma_1$ is non-empty, or in other words, that $\gamma \not= \gamma_0$. Since $\gamma$ is closed, it suffices to show that $\gamma_0$ is not a closed loop. If we show this, then, since $\overline \gamma_1$ will be a (nontrivial) closed path in $\mathcal D$, we can apply the argument given for $\overline \gamma$ and $\overline \gamma_0$ to $\overline \gamma_1$ and a simple subpath of $\overline \gamma_1$ to show that $\ell(\overline \gamma_1) \geq q$, and thus $\ell(w) \geq \ell(\overline \gamma_0) + \ell(\overline \gamma_1) \geq q + q=  2q$, as claimed.

    Showing that $\gamma_0$ is not closed is equivalent to showing $\overline{w_0} \not= e$.
    If $q$ is even, let $\Delta = \Delta(p,q/2,r)$ and if $q$ is odd, let $\Delta = \Delta(p,q,2)$ (so either $D = \Delta$ or $D$ is finite index in $\Delta$). 
    Let $K$ be the (3-skeleton of a) $K(\Delta,1)$ space defined in Section \ref{sec:centext}, with universal cover $\widetilde K$, and let $C$ be the Cayley graph of $Sh$. Note that $C$ is a covering of $\widetilde K^{(1)}$ (with $\widetilde K^{(1)}$ the Cayley graph of $\Delta$).
    The word $w_0$ gives rise to a path in $C$ in the standard way, hence also a path $\tilde \rho$ in $\widetilde K^{(1)}$ via the covering map $C \to \widetilde K^{(1)}$, and a path $\rho \in K^{(1)}$ under the covering map $\widetilde K \to K$.
    The path $\rho$ induces a cycle $\overline \rho \in C_1$ (see Section \ref{sec:centext} for notation). Since $w_0$ represents the trivial word in $\Delta$, $[\overline \rho] = 0$. 
    This means $\overline \rho \in B_1 = \im(d_2)$, so we may choose an element $R \in C_2$ such that $d_2(R) = \overline \rho$, say $R = n_a e_a + n_c e_c + n_{ac} e_{ac}$ for some $n_a,n_c, n_{ac} \in \bb{Z}$.
    Since $\gamma_0$ is simple and we have assumed $0 < i_j < p_{s_j}$, we know that $\tilde \rho$ is a simple loop, it must enclose at least one cell of $\widetilde K^{(2)}$ which maps to $e_{ac}$, and it must traverse the boundaries of each such cell with a consistent (positive) orientation. 
    This means that $n_{ac} \not= 0$, so $\varphi(R) \not= 0$. Therefore $\overline w_0$ cannot be trivial in $\sh(p,q,r)$ if $q$ is even. If $q$ is odd this shows that $\overline w_0$ is not trivial in $\sh(p,2q,2)$, but since $D$ lifts to $\sh(p,q,p) \leq \sh(p,2q,2)$ and $\overline w_0$ lies in this subgroup by assumption, the result also holds for $q$ odd.
\end{proof}

\begin{ex}
    We will give an illustrative example with the complexes $\mathcal D(p,q,r)$ and $\widehat \Theta$. (We use these over the Cayley graphs since the figures are much clearer, but a similar conceptualization works for the Cayley graphs.)

    \begin{figure}[h!]
        \centering
        \begin{multicols}{2}
        \includegraphics[width=0.375\textwidth,angle=-90]{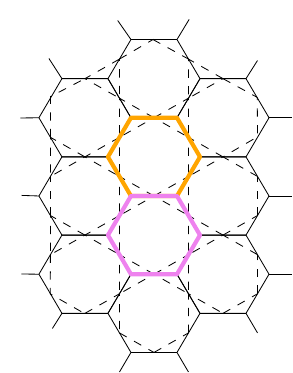}

        (a)

        \includegraphics[width=0.375\textwidth,angle=-90]{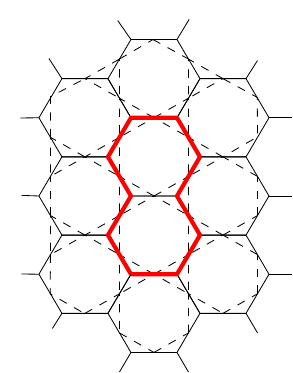}

        (b)
        \end{multicols}
        \caption{Hexagons in $\mathcal D(3,3,3)$ and a path enclosing them}
        \label{fig:hexandpathinE2}
    \end{figure}
    Consider the two hexagons in $\mathcal D(3,3,3)$ highlighted in Figure \ref{fig:hexandpathinE2}(a).
    Figure \ref{fig:hexandpathinlift}(a) shows lifts of these hexagons to the complex $\widehat \Theta$ for the Shephard group $\sh(3,6,3)$. (The dashed lines show which vertices are identified under the covering map; they are not part of $\widehat \Theta$.)
    \begin{figure}[h!]
        \centering 
        \begin{multicols}{2}
        \includegraphics[width=0.45\textwidth]{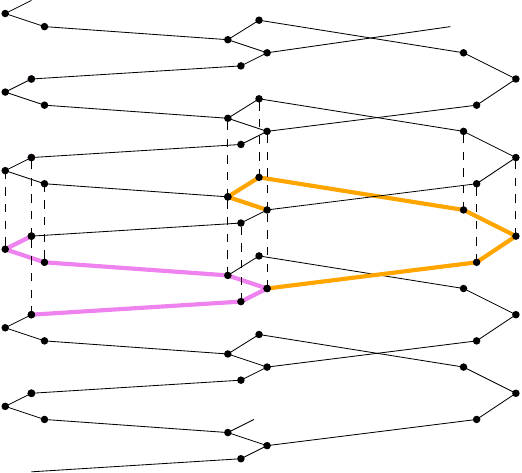}

        (a)

        \includegraphics[width=0.45\textwidth]{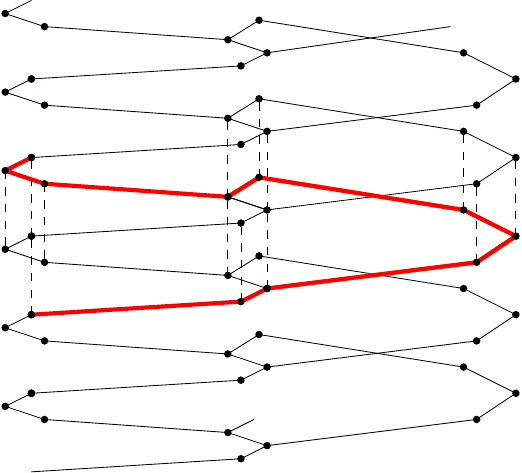}

        (b)
        \end{multicols}
        \caption{Lifts to $\widehat \Theta$ for $\sh(3,6,3)$}
        \label{fig:hexandpathinlift}
    \end{figure}
    The entire preimage of a hexagon under the covering map is a vertical column which resembles the universal cover of the circle with the cell structure coming from the hexagon.
    Consider the path encircling these two hexagons in $\mathcal D(3,3,3)$, shown in red in Figure \ref{fig:hexandpathinE2}(b). Its lift to $\widehat \Theta$ is shown in Figure \ref{fig:hexandpathinlift}(b).
    The endpoints of this path are ``distance 2'' along the fiber of the base vertex. This corresponds to the fact that the path encloses exactly two hexagons. One may compare this to the usual description of the Cayley graph of the 3-dimensional integer Heisenberg group $H(3)$, the main difference being that the vertical dashed lines would be actual edges of the Cayley graph of $H(3)$.
    This also illustrates how the center of $\sh(p,q,r)$ acts on its complex $\widehat \Theta$, since it acts by deck transformations; it is a uniform ``vertical translation'' along the dashed lines in Figure \ref{fig:hexandpathinlift} (the fibers of the map to $\mathcal D(p,q,r)$).
\end{ex}

\section{A \texorpdfstring{$\cat(0)$}{CAT(0)} cell complex for 2-dimensional Shephard groups}
\label{sec:delignecplx}

In this section, we recall the definition of $\Theta(\Gamma)$ for an arbitrary extended presentation graph $\Gamma$, which largely follows \cite[\S 3]{goldman2023cat0}.
We then show that this complex is $\cat(0)$ when $\Gamma$ is 2-dimensional.

\begin{defn} %
\label{def:complex}
Let $\Gamma$ be an extended presentation graph. 
We define $K = K_\Gamma = | (\mathcal S^{f}_\Gamma )'|$, where $(\mathcal S^{f}_\Gamma )'$ denotes the derived complex of $\mathcal S^{f}_\Gamma$ and $|(\mathcal S^{f}_\Gamma)'|$ is its geometric realization. 
We will denote an $n$-simplex of $K$ by
\[
    [\Lambda_0 < \Lambda_1 < \dots < \Lambda_n]
\] 
for a chain $\Lambda_0 < \Lambda_1 < \dots < \Lambda_n$ with each $\Lambda_i \in \mathcal S^{f}$. 
We note that the vertices are indexed by elements of $\mathcal S^{f}$; we will let $v_{\Lambda} = [\Lambda]$ denote the vertex of $K$ coming from $\Lambda$.
Define a complex of groups $\mathcal G = \mathcal G(\sh_\Gamma,K_\Gamma)$ over $K$ by declaring the local group at $v_{\Lambda}$ to be $\sh_{\Lambda}$ and the edge maps to be the natural maps coming from the inclusion of generators.

Next, define $\Delta = \Delta_\Gamma$ to be a simplex whose vertices are labeled by the generators $V(\Gamma)$ of $\sh_\Gamma$.
For $\Lambda \subseteq \Gamma$, let $\sigma_{\Lambda}$ denote the face of $\Delta_\Gamma$ spanned by the elements of $V(\Lambda)$.
We define a complex of groups $\widehat{\mathcal G} = \widehat{\mathcal G}(\sh_\Gamma, \Delta_\Gamma)$ by declaring the local group at the face $\sigma_{\Lambda}$ to be the group $\sh_{\widehat{\Lambda}}$, where $\widehat{\Lambda}$ is the full subgraph of $\Gamma$ generated by the vertices $V(\Gamma) \setminus V(\Lambda)$.
The edge maps are the standard maps induced by the inclusion of generating sets.
\end{defn}

Note that $\mathcal G$ is a simple complex of groups. Hence by \cite[Def.~II.12.12]{bridson2013metric}, the fundamental group of $\mathcal G$ is the direct limit over the edge maps; when $\Gamma$ is 2-dimensional, this direct limit is clearly $\sh_\Gamma$.
In general, neither complex of groups is a priori developable. If $\mathcal G$ is developable, we will denote its development $\Theta = \Theta(\Gamma) = \Theta_\Gamma$. If $\widehat{\mathcal G}$ is developable, we will denote its development
 $\widehat\Theta = \widehat\Theta(\Gamma) = \widehat\Theta_\Gamma$. For dihedral Shephard groups, it turns out that $\widehat\Theta$ has a straightforward description.
 
\begin{prop} \label{prop:sphericalcplx}
If $\Lambda$ is the graph which is a single edge between vertices $s$ and $t$ with all labels finite, then $\widehat{\mathcal G}(\sh_\Lambda, \Delta_\Lambda)$ is developable, and $\widehat \Theta_\Lambda$ can be described as follows: 
the vertex set of $\widehat \Theta_\Lambda$ are the cosets of $\langle s \rangle$ and $\langle t \rangle$ in $\sh_\Lambda$, and there is an edge between two vertices if the corresponding cosets intersect nontrivially. In particular, the center acts freely on $\widehat \Theta_\Lambda$.
\end{prop}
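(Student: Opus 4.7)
The plan is to unpack Definition \ref{def:complex}, establish developability by a standard criterion, identify the development with the coset graph explicitly, and verify the freeness of the central action. By definition, $\widehat{\mathcal G}(\sh_\Lambda,\Delta_\Lambda)$ is a simple complex of groups on the $1$-simplex $\Delta_\Lambda$ whose local groups are $\sh_{\{t\}} = \langle t \rangle$ at $\sigma_{\{s\}}$, $\sh_{\{s\}} = \langle s\rangle$ at $\sigma_{\{t\}}$, and the trivial group at the edge $\sigma_\Lambda$, with edge maps given by the canonical inclusions. For developability, I would invoke the standard criterion (e.g.~\cite[III.$\c C$]{bridson2013metric}) that a simple complex of groups on a simplex is developable if and only if each local group injects into the fundamental group $\sh_\Lambda$; the required injectivity reduces to the well-known fact that $s$ and $t$ have orders exactly $p$ and $r$ in $\sh_\Lambda$, visible from the linear representation afforded by Proposition \ref{prop:edgesresidfinite} (or, in the finite case, from the reflection representation).

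With developability in hand, the development $\widehat{\Theta}_\Lambda$ is built in the usual way: vertices over $\sigma_{\{s\}}$ are indexed by $\sh_\Lambda / \langle t \rangle$, vertices over $\sigma_{\{t\}}$ by $\sh_\Lambda / \langle s \rangle$, and edges over $\sigma_\Lambda$ by the elements of $\sh_\Lambda$, with the edge indexed by $g$ joining $g\langle t\rangle$ to $g\langle s\rangle$. Thus the vertex set of $\widehat{\Theta}_\Lambda$ is the disjoint union of the coset spaces $\sh_\Lambda/\langle s\rangle$ and $\sh_\Lambda/\langle t\rangle$, and there is an edge between $g\langle s\rangle$ and $h\langle t\rangle$ if and only if some $k$ satisfies $k\langle s\rangle = g\langle s\rangle$ and $k\langle t\rangle = h\langle t\rangle$, equivalently if and only if $g\langle s\rangle \cap h\langle t\rangle \neq \emptyset$. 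Uniqueness of this $k$---and hence the fact that $\widehat{\Theta}_\Lambda$ is a genuine simplicial graph rather than a multigraph---is equivalent to $\langle s\rangle \cap \langle t\rangle = \{e\}$ in $\sh_\Lambda$.

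This intersection triviality, together with the companion statements $Z(\sh_\Lambda) \cap \langle s \rangle = Z(\sh_\Lambda) \cap \langle t \rangle = \{e\}$ needed for the final step, is where the main work lies. In the finite-Shephard case, both follow from the faithful reflection representation, in which $\langle s\rangle$ and $\langle t\rangle$ are cyclic groups of pseudoreflections about distinct hyperplanes and the center acts by scalars. In the infinite case $h \leq 1$, I would push through the central quotient $\sh_\Lambda \twoheadrightarrow D(p,q,r)$ from Section \ref{sec:centext}: since $\sigma, \tau$ act as rotations about distinct points of $\mathbb{E}^2$ or $\mathbb{H}^2$, $\langle \sigma\rangle \cap \langle \tau\rangle$ is trivial in $D$, so any element of $\langle s\rangle \cap \langle t\rangle$ lies in the infinite cyclic kernel, but also has finite order dividing $\mathrm{lcm}(p,r)$, forcing it to be $e$. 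Finally, if $z \in Z(\sh_\Lambda)$ stabilizes a vertex $g\langle s \rangle$, then $g^{-1}zg \in \langle s \rangle$, and centrality gives $z \in \langle s\rangle \cap Z = \{e\}$; the analogous statement holds for $\langle t\rangle$-vertices, while edge stabilizers are trivial by construction, so the center acts freely on $\widehat{\Theta}_\Lambda$.
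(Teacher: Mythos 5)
Your argument is correct and follows essentially the same route as the paper's much terser proof: developability via injectivity of the cyclic vertex groups into the direct limit $\sh_\Lambda$, the standard coset-graph description of the development (the paper simply cites \cite[Proof of Lemma 4.3.2]{charney1995k} for this, whereas you supply the multigraph check $\langle s\rangle \cap \langle t\rangle = \{e\}$ explicitly), and freeness of the central action from the finiteness of vertex stabilizers against the infinite-order (or scalar) center. One small misattribution: Proposition \ref{prop:edgesresidfinite} (linearity) does not by itself give that $s$ has order exactly $p$ in $\sh_\Lambda$; the clean justification is the central quotient onto $D$, where $s$ maps to a rotation of order exactly $p$ --- a fact you already invoke for the intersection claims.
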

\begin{proof}
The fact that $\widehat{\mathcal G}$ is developable follows from the characterizations of $\sh(p,q,r)$ given in Section \ref{sec:centext}. Namely, in each case the cyclic groups $\bb{Z}/p\bb{Z}$ and $\bb{Z}/r\bb{Z}$ embed as the vertex groups.
The statement regarding the vertices and edges of $\widehat \Theta$ is identical to Coxeter groups and Artin groups; e.g., \cite[Proof of Lemma 4.3.2]{charney1995k}.
The center acts freely because the stabilizers of vertices and edges are finite (they are conjugates of $\langle s \rangle$ or $\langle t \rangle$ in the first case, and are trivial in the second), while the center has infinite order.
\end{proof}

When $\mathcal G$ is developable, it too has a straightforward description.

\begin{prop} \label{prop:developdescript}
    Let $\sh_\Gamma \c S^f_\Gamma = \{\, g \sh_\Lambda : \Lambda \in \c S^f_\Gamma \,\}$, ordered by inclusion. If $\mathcal G(\sh_\Gamma, K_\Gamma)$ is developable, then $\Theta_\Gamma \cong |(\sh_\Gamma \c S^f_\Gamma)'|$. In particular, the $n$-simplices of $\Theta_\Gamma$ correspond to $(n+1)$-chains $g_0 \sh_{\Lambda_0} < \dots < g_n \sh_{\Lambda_n}$ of cosets, and the action of $\sh_\Gamma$ is just the left multiplication action. We will denote the simplex arising from such a chain by
    \[
        [g_0 \sh_{\Lambda_0} < \dots < g_n \sh_{\Lambda_n}].
    \]
    The stabilizer of this simplex is $g_0^{-1} \sh_{\Lambda_0} g_0$.
\end{prop}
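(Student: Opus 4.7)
The plan is to apply the standard development construction for developable simple complexes of groups from \cite[Ch.~III.C]{bridson2013metric} and unpack what it produces in our specific setting. Recall that for a developable simple complex of groups $\mathcal G$ over a simplicial complex $K$ with fundamental group $G$, the development may be realized as $(G \times K)/\!\sim$, where $(g,x) \sim (g',x')$ iff $x = x'$ lies in the open interior of some simplex $\sigma \in K$ and $g^{-1}g'$ belongs to the local group $\mathcal G_\sigma$. For $\mathcal G(\sh_\Gamma, K_\Gamma)$, a simplex $\sigma = [\Lambda_0 < \Lambda_1 < \dots < \Lambda_n]$ has local group $\mathcal G_\sigma = \sh_{\Lambda_0}$, since the edge maps are the inclusions $\sh_{\Lambda_i} \hookrightarrow \sh_{\Lambda_j}$ for $\Lambda_i < \Lambda_j$ and $\Lambda_0$ is minimal.

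I would then define a map
\[
    \Phi : \Theta_\Gamma \longrightarrow |(\sh_\Gamma \c S^f_\Gamma)'|
\]
on simplices by sending $[(g, [\Lambda_0 < \dots < \Lambda_n])]$ to the chain $[g\sh_{\Lambda_0} < \dots < g\sh_{\Lambda_n}]$, and verify that it is a simplicial isomorphism. Well-definedness is immediate: if $(g, \sigma) \sim (g', \sigma)$ then $g^{-1}g' \in \sh_{\Lambda_0} \leq \sh_{\Lambda_i}$ for all $i$, so $g\sh_{\Lambda_i} = g'\sh_{\Lambda_i}$. Injectivity follows because equality of two chains of cosets forces equality of the underlying spherical subgraphs together with $g^{-1}g' \in \sh_{\Lambda_0}$. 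For surjectivity, given any chain $g_0\sh_{\Lambda_0} < \dots < g_n\sh_{\Lambda_n}$, the containments $g_0\sh_{\Lambda_0} \subseteq g_i\sh_{\Lambda_i}$ combined with $\sh_{\Lambda_0} \leq \sh_{\Lambda_i}$ force $g_i\sh_{\Lambda_i} = g_0\sh_{\Lambda_i}$, so the chain equals $\Phi([(g_0, [\Lambda_0 < \dots < \Lambda_n])])$.

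Finally, equivariance and stabilizers fall out for free: the $\sh_\Gamma$-action on $\Theta_\Gamma$ is induced by left multiplication on the first factor of $\sh_\Gamma \times K_\Gamma$, so under $\Phi$ it corresponds to the left multiplication action on chains of cosets, and the stabilizer of $[g_0\sh_{\Lambda_0} < \dots < g_n\sh_{\Lambda_n}]$ equals the stabilizer of its minimal coset $g_0\sh_{\Lambda_0}$, namely the appropriate conjugate of $\sh_{\Lambda_0}$ by $g_0$. The only step requiring genuine care is the surjectivity verification — rewriting an arbitrary coset chain in the single-coefficient form naturally arising from the development — but this is essentially forced by the inclusion constraints, so the argument is fairly routine once the development construction has been set up.
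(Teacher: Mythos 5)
Your argument is correct and is exactly the standard unpacking of the basic construction $(G\times K)/\!\sim$ from \cite[Ch.~III.$\mathcal{C}$]{bridson2013metric} that the paper implicitly relies on; the paper states this proposition without proof, so there is no competing argument to diverge from, and your identification of the local group of $[\Lambda_0<\dots<\Lambda_n]$ with $\sh_{\Lambda_0}$ together with the well-definedness/injectivity/surjectivity checks is the right (and complete) way to verify the coset-chain description. One small point: with the left-multiplication action the stabilizer of $[g_0\sh_{\Lambda_0}<\dots<g_n\sh_{\Lambda_n}]$ is $g_0\sh_{\Lambda_0}g_0^{-1}$ (the printed $g_0^{-1}\sh_{\Lambda_0}g_0$ appears to be a typo), and your phrasing ``the appropriate conjugate of $\sh_{\Lambda_0}$ by $g_0$'' is consistent with the correct version.
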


With this description, for developable $\mathcal G$ it is clear that $K$ embeds in $\Theta_\Gamma$ as the subcomplex consisting of simplices of the form $[ \sh_{\Lambda_0}, \sh_{\Lambda_1}, \dots, \sh_{\Lambda_n}]$. This allows one to easily see that $K$ is a fundamental domain for the action of $\sh_\Lambda$ on $\Theta_\Lambda$. In particular, we may view the vertices $v_\Lambda$ as being in $\Theta_\Gamma$ as well as $K$.
  
In order to show $\mathcal G$ is developable (under certain conditions), we show that it is ``nonpositively curved'' in the following sense.

\begin{lemma} \label{lem:locallycat0} \emph{\cite[Thm.~II.12.28]{bridson2013metric}} 
    If $\mathcal H$ is a (simple) complex of groups over a simply connected domain and the local development at each vertex is locally $\mathrm{CAT}(0)$, then $\mathcal H$ is developable and has locally $\mathrm{CAT}(0)$ development.
\end{lemma}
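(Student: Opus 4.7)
My plan is to follow the general framework of (non-positively curved) complexes of groups developed by Haefliger (and exposited in Bridson--Haefliger Chapter III.$\mathcal{C}$). There are really two separate things to establish: first, developability of $\mathcal{H}$, and second, that its development inherits a locally $\cat(0)$ metric from the local developments. My plan is to treat these in sequence, since the second only makes sense once we have the development in hand.

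For developability, I would appeal to the standard criterion that a simple complex of groups is developable iff at each vertex $v$ of the domain, the local group $\mathcal{H}_v$ embeds faithfully into its local development, and these embeddings are compatible along edges of the domain. By hypothesis each local development carries a locally $\cat(0)$ polyhedral metric on which the local group acts by isometries; such an action is automatically rigid enough (in particular, without inversions of cells and without nontrivial kernel on any small ball around a vertex) to give the faithful embedding required. The fact that the domain is simply connected then ensures there is no global monodromy obstruction to gluing the local data into a global development in the sense of BH; this is the essential role of the hypothesis on the domain.

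For the metric assertion, once we have the development $\Theta$, it is built by taking disjoint copies of the local developments (one for each vertex of the domain) and gluing them along identifications dictated by the edge maps of $\mathcal{H}$. A point $x\in\Theta$ lies in the image of some cell, so a small neighborhood of $x$ in $\Theta$ is isometric to a small neighborhood of the corresponding point in a local development containing a copy of that cell. Since the local development is locally $\cat(0)$ by assumption, one concludes that $\Theta$ is locally $\cat(0)$ at $x$. Equivalently, one may check the link condition directly: the link in $\Theta$ of any vertex coming from $v$ is canonically identified with the link in the local development at $v$, which is $\cat(1)$ by hypothesis.

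The genuinely delicate step is setting up the formalism (what is a local development, why the gluing is metric, what the link of a point in the glued object is, etc.) so that these local-to-global principles can be applied rigorously. This is where I would lean most heavily on BH's framework rather than reinvent the machinery; once that formalism is in place, both conclusions of the lemma are essentially immediate from the respective local-to-global principles for developability and for non-positive curvature.
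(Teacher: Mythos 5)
The paper offers no proof of this lemma at all: it is quoted verbatim as Theorem II.12.28 of Bridson--Haefliger, so your closing admission that you would ``lean most heavily on BH's framework'' makes your proposal, in the end, the same move the paper makes --- a citation. Judged as a sketch of what actually happens inside BH, however, two of your intermediate claims are inaccurate and would derail a genuine write-up. First, faithfulness of the local group on its local development is not the developability criterion. For a simple complex of groups the local group acts on its local development with quotient the star \emph{by construction} (and in the present setting that action is faithful automatically, since the local group over $\varnothing$ is trivial); there is nothing to verify there. Developability is the question of whether these prescribed local actions are all restrictions of a single global action, which is equivalent to the local groups injecting into the direct limit (the fundamental group of the complex of groups). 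The curvature hypothesis enters through a Cartan--Hadamard-type argument: one develops the local data along paths in the domain, and local $\cat(0)$-ness is what makes this propagation consistent. ``No global monodromy obstruction'' is the \emph{conclusion} of that argument, not a consequence of simple connectivity alone --- there are non-developable (simple) complexes of groups over a single simplex, which is as simply connected as one could ask.

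Second, the development is not assembled by gluing disjoint copies of the local developments along edge identifications; it is the basic construction $D(K,\cdot)$, whose points are pairs consisting of a coset of a local group and a point of the fundamental domain $K$. The statement that does the work for the metric conclusion is that the open star of each vertex of the development is isometric to the local development at the corresponding vertex of $K$; your final remark identifying the link of a vertex in the development with the link in the local development is the correct formulation of this, and is the part of your sketch worth keeping. So the architecture of your plan is right, but each of the two local-to-global principles you invoke needs the precise BH statement substituted for the heuristic you give; as written, neither the developability step nor the description of the development would survive being made rigorous.
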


It is clear that $K$ is simply connected, since $v_\varnothing$ is a cone point. 
We will describe the local development shortly. We begin with putting a metric on the fundamental domain $K$.

\begin{defn}
    For $\Lambda \in \mathcal S^{f}$, let 
    \begin{align*}
        \mathcal S^{f}_{\geq \Lambda} &= \{\,\Lambda' \in \mathcal S^{f}:\Lambda' \geq \Lambda  \,\} &
         F_\Lambda &= |(\mathcal S^{f}_{\geq \Lambda})'| \\
        \mathcal S^{f}_{\leq \Lambda} &= \{\,\Lambda' \in \mathcal S^{f}:\Lambda' \leq \Lambda  \,\} &
        F_\Lambda^* &= |(\mathcal S^{f}_{\leq \Lambda})'|.
    \end{align*}
\end{defn}

Notice that $F_\Lambda^*$ is combinatorially a cube whose faces are $F_{\Lambda_1} \cap F_{\Lambda_2}^*$ where $\Lambda_1 \subseteq \Lambda_2 \subseteq \Lambda$. So $K$ itself has a cubical structure with faces $F_{\Lambda_1} \cap F_{\Lambda_2}^*$ for $\Lambda_1 \subseteq \Lambda_2 \in \mathcal S^{f}$. This Proposition is a straightforward exercise:

\begin{prop}
    With the cubical cell structure above, $F_\Lambda^*$ is isomorphic to the cone on the first barycentric subdivision $\Delta_\Lambda'$ of $\Delta_\Lambda$ with cone point $v_\Lambda$. The isomorphism is induced by the map sending $v_{\Lambda_0}$ to the barycenter of $\sigma_{\widehat{\Lambda_0}}$.
\end{prop}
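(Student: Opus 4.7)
My plan is to treat the proposition as a statement about order-reversing bijections of posets, made geometric through the barycentric subdivision. The key observation is that for $\Lambda \in \mathcal{S}^f$, the assignment $\Lambda_0 \mapsto \widehat{\Lambda_0}$ (interpreted within $\Lambda$, i.e., the full subgraph of $\Lambda$ on $V(\Lambda) \setminus V(\Lambda_0)$) gives an order-reversing bijection between $\mathcal{S}^f_{\leq \Lambda}$ and the poset of full subgraphs of $\Lambda$. This relies on the standard fact that every full subgraph of a spherical-type graph is itself spherical-type, so every full subgraph of $\Lambda$ lies in $\mathcal{S}^f_{\leq \Lambda}$; under this bijection, full subgraphs of $\Lambda$ correspond to faces of $\Delta_\Lambda$, with $\varnothing$ corresponding to the empty face.

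Next, I would match the face poset of $\Delta_\Lambda$ with the vertex set of the cone on $\Delta_\Lambda'$: each non-empty face $\sigma_\mu$ contributes a vertex (its barycenter) to $\Delta_\Lambda'$, while the cone point $v_\Lambda$ plays the role of the empty face. The prescribed map $v_{\Lambda_0} \mapsto$ barycenter of $\sigma_{\widehat{\Lambda_0}}$ (reading ``barycenter of $\sigma_\varnothing$'' as the cone point when $\Lambda_0 = \Lambda$) is then a bijection on vertex sets, obtained by composing the two previous bijections.

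To upgrade this vertex bijection to a simplicial isomorphism, I would verify that simplices of $F_\Lambda^*$ --- which are chains $[\Lambda_0 < \cdots < \Lambda_k]$ in $\mathcal{S}^f_{\leq \Lambda}$ --- correspond to simplices of the cone on $\Delta_\Lambda'$. Such a chain maps to a reversed chain of faces $\sigma_{\widehat{\Lambda_0}} > \cdots > \sigma_{\widehat{\Lambda_k}}$; if $\Lambda_k < \Lambda$, this is a chain of non-empty faces, hence a simplex of $\Delta_\Lambda'$, and if $\Lambda_k = \Lambda$, the last term is the empty face, which in the cone translates to joining the chain of non-empty faces $\sigma_{\widehat{\Lambda_0}} > \cdots > \sigma_{\widehat{\Lambda_{k-1}}}$ with the cone point. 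The converse direction is symmetric.

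I do not foresee a genuine obstacle; the argument is essentially a dictionary between chains in a poset and chains in the order-reversed poset, made concrete by the barycentric subdivision and the cone construction. The only point requiring care is bookkeeping around the extremes $\Lambda_0 = \varnothing$ (mapping to the barycenter of $\Delta_\Lambda$ itself) and $\Lambda_0 = \Lambda$ (mapping to the cone point rather than to a non-existent ``barycenter of the empty simplex''), which is precisely why the cone construction appears in the statement.
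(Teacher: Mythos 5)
Your proof is correct, and it is the intended argument: the paper explicitly leaves this proposition as ``a straightforward exercise'' with no written proof, and the order-reversing complementation bijection $\Lambda_0 \mapsto \widehat{\Lambda_0}$ between $\mathcal S^f_{\leq\Lambda}$ (the full Boolean lattice of full subgraphs of $\Lambda$, since spherical-type is inherited by full subgraphs) and the face poset of $\Delta_\Lambda$ is exactly the right dictionary, with your bookkeeping at the extremes $\Lambda_0=\varnothing$ and $\Lambda_0=\Lambda$ handling the cone point correctly.
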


Thus we may identify $lk(v_\Lambda, F_\Lambda^*)$ and $\Delta_\Lambda$.
With this connection we can define an explicit metric on $K$. 

\begin{defn} \label{defn:coxeterblock}
We give the cell $F_{\Lambda_1} \cap F_{\Lambda_2}^*$ of $K$ the metric of a \emph{Coxeter block}.
Briefly, this is (the closure of) a connected component of the Coxeter zonotope associated to the finite Coxeter group $W_{\Lambda_2}$ minus its reflection hyperplanes. 
(See \cite[\S 4.4]{charney1995k}, where this metric is defined in detail for the Davis-Moussong complex and Deligne complex.)
In this metric, if $\Lambda \in \mathcal S^{f}$, then $\Delta_\Lambda$ is a spherical simplex where the length of the edge between the vertices corresponding to $i$ and $j$ is $\pi/{m_{ij}}$. 
\end{defn}

Since $K$ is a (strict) fundamental domain, we can use the action of $\sh_\Lambda$ to metrize all of $\Theta_\Gamma$. We will call this the \emph{Moussong metric} on $\Theta_\Gamma$.

Now we recall the local development, focusing on the case of $\c G$.
Let $v_\Lambda$ be a vertex of $K$, with $\Lambda \in \mathcal S^{f}$. The \emph{upper star} $St^\Lambda$ of $v_\Lambda$ in $\mathcal G$ is the (full) subcomplex of $K$ spanned by the vertices $v_{\Lambda'}$ with $\Lambda' \geq \Lambda$. 
The \emph{lower link} $Lk_\Lambda$ of $v_\Lambda$ in $\mathcal G$ is the development of the subcomplex of groups $\widehat {\mathcal G}(K_{<\Lambda})$ of $\mathcal G(K)$, where $K_{<\Lambda}$ denotes the subcomplex spanned by vertices $v_{\Lambda'}$ with $\Lambda' \lneq \Lambda$. 
Both of these objects are simplicial complexes which inherit the metric placed on $K$.
The \emph{local development at $v_\Lambda$} is the spherical join
\begin{align*}
    D(\Lambda) = St^\Lambda \ast Lk_\Lambda.
\end{align*}
Its metric naturally comes from the metric on $K$.
The link of $v_\Lambda$ in the local development is
\begin{align*}
    lk(v_\Lambda, D(\Lambda)) &= Lk^\Lambda \ast Lk_\Lambda,
\end{align*}
where $Lk^\Lambda$ is the \emph{upper link}, meaning the (full) subcomplex of $K$ spanned by the vertices $v_{\Lambda'}$ with $\Lambda' \gneq \Lambda$. We may also sometimes refer to this complex as $K_{>\Lambda}$.

Note that $K_{>\Lambda}$ is isomorphic to $lk(v_\Lambda, F_\Lambda)$ and $K_{<\Lambda}$ is isomorphic to $lk(v_\Lambda, F^*_\Lambda)$. 
We use the previous proposition to identify $K_{<\Lambda}$ with $\Delta_\Lambda$. With this identification, the complex of groups $\widehat{\mathcal G}(K_{<\Lambda})$ is isomorphic to $\widehat{\mathcal G}(G_\Lambda, \Delta_\Lambda)$ as defined above, and thus $Lk_\Lambda$ is isomorphic to $\widehat \Theta_\Lambda$. It is straightforward to check that the metrics placed on $K$ above agree with the claimed metrics on $\Delta_\Lambda$. In other words, there is an isometry
\begin{align*}
    lk(v_\Lambda, D(\Lambda)) \cong lk(v_\Lambda, F_\Lambda) * \widehat \Theta_{\Lambda},
\end{align*}
where the join is the usual spherical join.
There are two special cases to note. When $\Lambda = \varnothing$, then $\widehat \Theta_\varnothing$ is empty, so $lk(v_\Lambda, D(\Lambda)) \cong lk(v_\Lambda, F_\Lambda)$. When $\Lambda$ is maximal in $\c S^f$, $F_\Lambda$ is a single point $v_\Lambda$, so $lk(v_\Lambda, F_\Lambda)$ is empty and $lk(v_\Lambda, D(\Lambda)) \cong \widehat \Theta_{\Lambda}$.

Showing the local development is nonpositively curved amounts to showing that these components of the links are $\mathrm{CAT}(1)$. Since a spherical join is $\mathrm{CAT}(1)$ if and only if both components are \cite[Cor.~II.3.15]{bridson2013metric}, this reduces to showing that $lk(v_\Lambda, F_\Lambda)$ and $\widehat \Theta_{\Lambda}$ are $\mathrm{CAT}(1)$ when $\Lambda \in \mathcal S^{f}$. 
With this in mind, we can now complete the proof of Theorem \ref{thm:cocptaction}. It will follow immediately by the next theorem and general facts about simple complexes of groups.

\begin{thm} \label{thm:girth}
    Suppose $\Gamma$ is a 2-dimensional extended presentation graph. Then $\c G$ is developable and its development $\Theta_\Gamma$ is $\cat(0)$. 
\end{thm}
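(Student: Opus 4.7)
The plan is to invoke Lemma \ref{lem:locallycat0} for the simple complex of groups $\c G$ over the simply connected domain $K_\Gamma$ (simply connected since $v_\varnothing$ is a cone point). It therefore suffices to verify that at every vertex $v_\Lambda$ of $K_\Gamma$ the link
\[
    lk(v_\Lambda, D(\Lambda)) \cong lk(v_\Lambda, F_\Lambda) \ast \widehat\Theta_\Lambda
\]
is $\mathrm{CAT}(1)$. Since a spherical join is $\mathrm{CAT}(1)$ if and only if both factors are, the verification decouples into two independent estimates, one for the upper link and one for the lower link.

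The upper link $lk(v_\Lambda, F_\Lambda)$ is easy under the (2D) hypothesis. For $\Lambda$ an edge of $\Gamma$ it is empty; for $|V(\Lambda)| = 1$ it is $0$-dimensional and hence trivially $\mathrm{CAT}(1)$; for $\Lambda = \varnothing$ it is a $1$-dimensional bipartite graph whose vertex classes are indexed by single generators and by spherical-type edges of $\Gamma$, with edges of length $\pi/2$ in the Coxeter block metric. Bipartiteness forbids $3$-cycles, so every embedded cycle uses at least four edges and therefore has circumference at least $2\pi$. This is exactly the Charney-Davis link analysis for $2$-dimensional Artin groups, and it applies here unchanged because the (2D) condition on $\Gamma$ matches theirs.

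The lower link $\widehat\Theta_\Lambda$ carries all the content, and this is where the hard work of Section \ref{sec:syllable} enters. For $\Lambda$ empty or a single vertex, $\widehat\Theta_\Lambda$ is empty or a spherical simplex and is trivially $\mathrm{CAT}(1)$. The critical case is $\Lambda = \{i,j\}$ a spherical-type edge with label $m = m_{ij}$, where Proposition \ref{prop:sphericalcplx} identifies $\widehat\Theta_\Lambda$ with the $1$-dimensional coset geometry for $\sh_\Lambda = \sh(p_i, m, p_j)$, whose edges have length $\pi/m$ in the Moussong metric. A $1$-dimensional spherical complex is $\mathrm{CAT}(1)$ precisely when its girth is at least $2\pi$, i.e., when every embedded combinatorial cycle has length at least $2m$. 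An embedded closed loop of combinatorial length $n$ corresponds to a cyclically reduced word $w = s_1^{i_1}\cdots s_n^{i_n}$ in $\sh_\Lambda$ with $\overline w = e$, and the requirement that successive same-type cosets be distinct forces each exponent $i_j$ to be nonzero modulo the order of $s_j$. Proposition \ref{prop:girth} then delivers $n = \ell(w) \geq 2m$, which is exactly the needed girth bound. This is the main obstacle; the modular-exponent hypothesis in Proposition \ref{prop:girth}, absent from the classical Appel-Schupp bound used for Artin groups, is exactly what is needed to accommodate the torsion in the standard generators of $\sh_\Lambda$.

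With local $\cat(0)$-ness of the development in hand, Lemma \ref{lem:locallycat0} yields both developability of $\c G$ and local $\cat(0)$-ness of $\Theta_\Gamma$. Since the development of a simple complex of groups over a simply connected domain is itself simply connected, the Cartan-Hadamard theorem upgrades local $\cat(0)$ to $\cat(0)$, completing the proof.
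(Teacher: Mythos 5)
Your overall route is the same as the paper's: reduce to the links via Lemma \ref{lem:locallycat0}, handle the upper links by the Charney--Davis analysis, and handle the spherical-edge lower links $\widehat\Theta_\Lambda$ by translating embedded loops into cyclically reduced words with exponents nonzero modulo the generator orders and invoking Proposition \ref{prop:girth}. That part is exactly right and is where the content lies.

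However, your explicit justification for the $\Lambda = \varnothing$ upper link is wrong, and it matters because this is the one place the (2D) hypothesis enters the upper-link analysis. In the Moussong metric the edge of $lk(v_\varnothing, K)$ joining $v_{\{s\}}$ to $v_{e_{st}}$ has length $\pi/2 - \pi/(2m_{st})$, not $\pi/2$ (edge length $\pi/2$ is the \emph{cubical} metric, which is only used later in Lemma \ref{lem:2dfc}). Moreover, since $\Gamma$ is a simplicial graph there are no $4$-cycles in this bipartite graph at all; the shortest candidate cycles are $6$-cycles coming from triangles $\{s,t,u\}$ of $\Gamma$ with all edges spherical-type, of total length $3\pi - \pi(1/m_{st} + 1/m_{tu} + 1/m_{us})$. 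This is $\geq 2\pi$ precisely when $1/m_{st} + 1/m_{tu} + 1/m_{us} \leq 1$, i.e.\ precisely when the triangle is not spherical-type --- which is what (2D) guarantees. Your ``bipartite, hence girth $\geq 4$, hence length $\geq 2\pi$'' argument would prove the link $\cat(1)$ for \emph{every} presentation graph, which is false; the citation to Charney--Davis saves the conclusion, but the reasoning you supply in its place does not. A second, smaller point: you apply Proposition \ref{prop:girth} uniformly to all spherical-type edges, but its proof in the paper goes through the coset geometry tiling of $\bb[2]{E}$ or $\bb[2]{H}$ and hence only covers \emph{infinite} $\sh_\Lambda$; the paper separates off the finite dihedral case and cites the earlier work \cite{goldman2023cat0} for it. You should do the same, or at least note that the finite case requires a different argument.
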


\begin{proof}
    As in the Artin group and Coxeter group case, $lk(v_\Lambda, F_\Lambda)$ is $\cat(1)$ whenever $\Lambda \in \c S^f_\Gamma$ (see discussion before and after Lemma 4.4.1 in \cite{charney1995k}), so it remains to show that $\widehat \Theta_{\Lambda}$ is $\cat(1)$ for $\Lambda \in \c S^f_\Gamma$. Since $\Gamma$ is 2-dimensional, the only elements of $\c S^f$ are $\varnothing$, singletons, and edges. In the first two cases, $\widehat \Theta_\Lambda$ is either empty or finite, resp., so we may assume we are in the third case. Note that if $\sh_\Lambda$ is finite (i.e., has labels $1/p_i + 2/m_{ij} + 1/p_j > 1$), then this complex was shown to be $\cat(1)$ in \cite[Lemma 6.1]{goldman2023cat0}, so we may assume this group is infinite.
    
    By \cite[Lem.~II.5.6]{bridson2013metric}, it suffices to show that $\widehat \Theta_\Lambda$ has no closed loops of length $< 2\pi$. Since the length of an edge of $\widehat \Theta_\Lambda$ is $\pi/m_{ij}$, we must show that the edge length of any closed loop in $\widehat \Theta_\Lambda$ is at least $2m_{ij}$.
    Let $\gamma$ be an embedded closed loop in $\widehat \Theta_\Lambda$, say $\gamma = E_1\cdots E_n$ for edges $E_i$ of $\widehat\Theta$. Note that the edge path length $\ell(\gamma)$ is $n$. For $1 \leq i \leq n$, let $v_i$ be the vertex at which the edges $E_{i-1}$ and $E_i$ meet (with indices taken cyclically). Let $s_i$ be the generator of $\sh_\Lambda$ such that $v_i = g_i \langle s_i \rangle$ for some $g_i \in \sh_\Lambda$.
    We can write $E_i = g_is_i^{m_i}E_{i-1}$ for some $m_i \in \bb{Z}$. 
    Since $\gamma$ is embedded, $m_i$ is not a multiple of $p_{s_i}$ (the order of $s_i$).
    Stringing together these equalities gives a word $s_1^{m_1} s_2^{m_2}\cdots s_n^{m_n} = 1$ in $\sh_\Lambda$. Since $s_i \not= s_{i+1}$, the syllable length of this word is $n = \ell(\gamma$. By Proposition \ref{prop:girth}, we must have $\ell(\gamma) = n \geq 2m_{ij}$. 
    The result now follows from Lemma \ref{lem:locallycat0}.
\end{proof}

When the local groups of a nonpositively curved complex of groups are all finite, this determines all finite subgroups of the fundamental group (namely, they are the conjugates of the local groups). While we can't exactly say that here, we can determine all elements of finite order.

\begin{cor} \label{cor:finiteorderconjugate}
    Suppose $\Gamma$ is a 2-dimensional extended presentation graph. If $h \in\sh_\Gamma$ has finite order, then it is conjugate to a power of one of the standard generators of $\sh_\Gamma$.
\end{cor}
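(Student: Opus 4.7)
The plan is to exploit the $\cat(0)$ geometry of $\Theta_\Gamma$ established in Theorem \ref{thm:girth}. Since $h$ has finite order, the cyclic group $\langle h \rangle$ is finite, and by the Cartan fixed-point theorem \cite[Cor.~II.2.8]{bridson2013metric} it fixes a point $x \in \Theta_\Gamma$. The stabilizer of $x$ coincides with the stabilizer of the unique minimal closed cell containing $x$, which by Proposition \ref{prop:developdescript} is of the form $g\sh_\Lambda g^{-1}$ for some $g \in \sh_\Gamma$ and some $\Lambda \in \c S^f_\Gamma$. Thus $g^{-1}hg \in \sh_\Lambda$.

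The hypothesis that $\Gamma$ is 2-dimensional forces $|V(\Lambda)| \leq 2$, so the argument splits into three cases. If $|V(\Lambda)| = 0$, then $\sh_\Lambda$ is trivial and $h = e$, which is the trivial power of any standard generator. If $|V(\Lambda)| = 1$, say $V(\Lambda) = \{s\}$, then $\sh_\Lambda$ is the cyclic group generated by $s$, so $g^{-1}hg$ is already a power of $s$ and $h$ is conjugate to a power of the standard generator $s$. The remaining case $|V(\Lambda)| = 2$ reduces the question to a dihedral Shephard group $\sh_\Lambda \cong \sh(p_s, m_{st}, p_t)$, where $V(\Lambda) = \{s,t\}$.

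For this dihedral subcase, when $\sh_\Lambda$ is infinite I invoke Corollary \ref{cor:dihedralfiniteconjugate}, which produces an $h' \in \sh_\Lambda$ with $h'(g^{-1}hg)h'^{-1}$ equal to a power of $s$ or $t$; composing the two conjugations, and using that $\sh_\Lambda$ embeds into $\sh_\Gamma$ (by Theorem \ref{thm:cocptaction}), one obtains that $h$ is conjugate in $\sh_\Gamma$ to a power of a standard generator. When $\sh_\Lambda$ is finite the analogous statement is part of the treatment of finite Shephard groups in \cite{goldman2023cat0}. I expect no real obstacles beyond assembling these ingredients, since the technical heavy lifting has already been done in Theorem \ref{thm:girth} and Corollary \ref{cor:dihedralfiniteconjugate}; the main point to verify is simply that conjugation within the parabolic $\sh_\Lambda$ is automatically conjugation within $\sh_\Gamma$, which is immediate from the embedding of edge groups.
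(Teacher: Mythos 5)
Your overall strategy is the same as the paper's: use the $\cat(0)$ geometry of $\Theta_\Gamma$ to produce a fixed point, identify its stabilizer with a conjugate of a spherical parabolic $\sh_\Lambda$ via Proposition \ref{prop:developdescript}, and run a case analysis on $|V(\Lambda)| \leq 2$. (The paper organizes this through the convex fixed-point set $\mathrm{Fix}(h)$ rather than a single fixed point and its minimal carrier, but the content is identical, and your reduction to $g^{-1}hg \in \sh_\Lambda$ for some $\Lambda \in \c S^f_\Gamma$ is sound, as is the transfer of the conjugation from $\sh_\Lambda$ to $\sh_\Gamma$.)

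The gap is in your last sentence about the case $|V(\Lambda)| = 2$ with $\sh_\Lambda$ finite. The ``analogous statement'' you attribute to \cite{goldman2023cat0} is false for finite dihedral Shephard groups: already $\sh(2,4,2)$ is the dihedral group of order $8$, and its central element $(st)^2$ has order $2$ but is not conjugate to a power of $s$ or $t$ (its conjugacy class is a singleton, while the conjugates of powers of the generators are $e$, $s$, $t$, $sts$, $tst$). Since every element of a finite group has finite order, the claimed statement would say every element of $\sh(2,4,2)$ is conjugate to a generator power, which no citation can rescue. So this case must either be handled by a genuinely different argument or excluded by hypothesis. To be fair, the paper's own proof has the same blind spot: after reducing to $h \in g^{-1}\sh_e g$ it invokes Corollary \ref{cor:dihedralfiniteconjugate}, whose hypothesis requires $\sh_e$ to be infinite, without ruling out that $\sh_e$ is finite at that point --- and taking $\Gamma$ to be the single edge $(2,4,2)$ shows it need not be. Your proposal therefore reproduces the paper's argument faithfully, but the explicit claim that the finite dihedral case is already settled in the literature is the one step that does not hold up.
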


\begin{proof}
    Suppose $h \in \sh_\Gamma \setminus \{e\}$ has finite order.
    By \cite[Cor.~II.2.8(1)]{bridson2013metric}, the fixed point set $Fix(h) = \{\, x \in \Theta_\Gamma : hx = x \,\}$ is a non-empty convex subset of $\Theta_\Gamma$.
    Since $\Gamma$ is 2-dimensional, the simplices of $\Theta_\Gamma$ are at most dimension 2, and these top-dimensional simplices are of the form 
    \[
        [g_0\sh_\varnothing, g_1 \sh_{\{s\}}, g_2 \sh_e]
    \]
    for $e$ an edge of $\Gamma$ and $s$ a vertex of $e$. But the stabilizer of any point $x$ in the interior of such a cell is $g_0^{-1}\sh_\varnothing g_0$, which is the trivial group. In particular, $Fix(h)$ must be a tree in the 1-skeleton of $\Theta_\Gamma$ and avoid vertices of type $[g\sh_\varnothing]$. 
    If $Fix(h)$ contains a vertex $V = [g\sh_{\{s\}}]$ or an edge $E = [g\sh_{\{s\}}, g'\sh_e]$, then $h \in Stab(V) = g^{-1}\sh_{\{s\}}g$ and so is conjugate to a power of $s$. 
    So suppose neither of these cases occur. This implies $Fix(h) = [g\sh_e]$ for an edge $e$ of $\Gamma$ with vertices $s$ and $t$, and $h \in g^{-1} \sh_e g$. By translating, we may assume $g = e$, so $h \in \sh_e$. Corollary \ref{cor:dihedralfiniteconjugate} then implies $h$ is conjugate to a power of $s$ or $t$.
\end{proof}

\section{Acylindrical hyperbolicity}
\label{sec:acylhyp}

In \cite{vaskou2022acylindrical}, it is shown that (irreducible) 2-dimensional Artin groups of rank at least 3 are acylindrically hyperbolic. By modifying the proof appropriately, we obtain an analogous result for 2-dimensional Shephard groups as an application of Theorem \ref{thm:cocptaction}:

\acylindricallyhyperbolic*

This follows from 

\begin{prop} \label{prop:keyacylprop}
\textup{\cite[Theorem D]{vaskou2022acylindrical}} 
Let $X$ be a $\cat(0)$ simplicial complex, together with an action by simplicial isomorphisms of a group $G$. Assume that there exists a vertex $v$ of $X$ with stabilizer $G_v$ such that:
\begin{enumerate}
    \item  The orbits of $G_v$ on the link $lk(v,X)$ are unbounded, for the associated angular metric. 
    \item $G_v$ is weakly malnormal in $G$, i.e., there exists an element $g \in G$ such that $G_v \cap gG_v g^{-1}$ is finite.
\end{enumerate}
Then $G$ is either virtually cyclic or acylindrically hyperbolic.
\end{prop}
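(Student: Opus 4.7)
The plan is to apply Osin's characterization: $G$ is acylindrically hyperbolic if and only if it admits a non-elementary isometric action on a Gromov hyperbolic space with a loxodromic WPD element. A convenient route in the $\cat(0)$ setting is to produce an axial \emph{rank-one} isometry $\gamma \in G$ of $X$ whose setwise axis-stabilizer is virtually cyclic; the projection-complex construction of Bestvina--Bromberg--Fujiwara (or Sisto's contracting-element results) then yields a hyperbolic graph on which $G$ acts with $\gamma$ as a loxodromic WPD element, giving the stated dichotomy (either $G$ is virtually $\langle \gamma \rangle$, or $G$ is acylindrically hyperbolic).

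First I would exploit hypothesis (1). Since $X$ is $\cat(0)$, the link $L := lk(v,X)$ with its angular metric is $\cat(1)$. An unbounded $G_v$-orbit in a $\cat(1)$ space forces, by standard isometry trichotomy on $\cat(1)$ spaces, some $h \in G_v$ and directions $\xi_\pm \in L$ with $\angle_v(h^{-n}\xi_-,\, h^{n}\xi_+) \to \pi$. Geometrically this provides, for each large $n$, a local $\cat(0)$ geodesic through $v$ whose incoming and outgoing directions at $v$ make angle arbitrarily close to $\pi$; in the limit $h$ furnishes an ``almost axis'' through $v$ with arbitrarily large angular twist.

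Next I would use hypothesis (2) to break translation symmetry. Choose $g \in G$ with $G_v \cap g G_v g^{-1}$ finite and form the candidate $\gamma := g\, h^N$ for $N$ large. The rotational strength of $h^N$ at $v$, together with the conjugate rotational strength $g h^N g^{-1}$ at $gv$, forces the piecewise-geodesic orbit $\ldots,\gamma^{-1}v,\, v,\, \gamma v,\, \gamma^2 v,\ldots$, obtained by concatenating fixed-shape segments, to bend by an angle close to $\pi$ at every translate of $v$, hence to be a local (and therefore global) $\cat(0)$ geodesic: this is the axis of $\gamma$. The \emph{rank-one} property of the axis follows from (2): a flat half-plane containing the axis would yield directions at $v$ and at $gv$ simultaneously stabilized by infinitely many isometries, forcing an infinite subgroup into $G_v \cap g G_v g^{-1}$ and contradicting finiteness. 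The same mechanism bounds the setwise stabilizer of the axis inside a virtually cyclic extension of $\langle \gamma \rangle$, supplying the WPD input.

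The hard part will be the second step: choosing $N$ and $g$ compatibly, and estimating angles uniformly along the axis at every translate $\gamma^k v$, so as to simultaneously guarantee quasi-isometric embedding of $\langle \gamma \rangle$-orbits (loxodromicity), a contracting axis (rank one), and a virtually cyclic axis-stabilizer (WPD). All three are governed by the same $\pi$-angle mechanism at $v$ and $gv$, propagated $G$-equivariantly along $\langle \gamma \rangle$-translates, but the bookkeeping --- in particular ruling out parabolic drift and verifying that ``almost $\pi$'' at the two seed vertices already forces a genuine rank-one axis --- is the crux of the argument.
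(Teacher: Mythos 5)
The paper does not prove this proposition at all: it is quoted verbatim from \cite[Theorem D]{vaskou2022acylindrical}, so there is no internal argument to compare against. Measured against the actual proof in that reference, your overall strategy is the right one --- produce a hyperbolic isometry of the form $\gamma = g h$ with $h \in G_v$ acting on $lk(v,X)$ with large angular displacement, obtain an axis through $v$ and $gv$ by a local-to-global argument, and use weak malnormality to control stabilizers --- but two of your intermediate steps would fail as written.

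First, the ``standard isometry trichotomy on $\cat(1)$ spaces'' producing directions with $\angle_v(h^{-n}\xi_-, h^n\xi_+) \to \pi$ is not a real step, and ``arbitrarily close to $\pi$'' is not good enough: the concatenation criterion in a $\cat(0)$ space requires the incoming and outgoing directions at each breakpoint to be at link-distance \emph{at least} $\pi$, exactly; a path that turns by $\pi - \epsilon$ at a vertex is not a local geodesic, so no amount of ``almost $\pi$'' propagates to a global geodesic axis. What hypothesis (1) actually delivers, by a one-line triangle inequality, is the exact statement needed: for the two specific directions $\xi, \eta \in lk(v,X)$ pointing toward $g^{-1}v$ and $gv$, unboundedness of the $G_v$-orbit of $\xi$ yields $h \in G_v$ with $d(h^{-1}\xi, \eta) \geq d(\xi, h\xi) - d(\xi,\eta) \geq \pi$. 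Relatedly, taking powers $h^N$ is not the right knob --- the angular displacement of $h^N$ need not exceed that of $h$ --- so ``$N$ large'' does no work. Second, the detour through rank-one isometries and Bestvina--Bromberg--Fujiwara projection complexes is both unnecessary and under-justified: your claim that a flat half-plane along the axis forces an infinite subgroup into $G_v \cap g G_v g^{-1}$ is not substantiated. The efficient route (and the one taken in the cited proof) is to verify WPD for $\gamma$ directly: an element moving a long subsegment of the axis a bounded distance must coarsely fix both $v$ and $gv$, hence lies in a finite union of cosets of $G_v \cap g G_v g^{-1}$, which is finite by hypothesis (2); the dichotomy then follows from the Bestvina--Fujiwara/Osin criterion for a group acting on a hyperbolic (here $\cat(0)$) space with a WPD loxodromic. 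You have correctly located where the work is, but the ``$\to \pi$'' limit mechanism must be replaced by the exact $\geq \pi$ displacement argument for the proof to go through.
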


Thus the extra assumption (4) of Theorem \ref{thm:acylindricallyhyperbolic} is necessary to guarantee condition (1) of Proposition \ref{prop:keyacylprop} is satisfied; otherwise, all links would be finite graphs.

\begin{lemma} \label{lem:part1acyl}
    Let $\Gamma$ be a presentation graph satisfying the hypotheses of Theorem \ref{thm:acylindricallyhyperbolic}, and let $e$ be any edge of $\Gamma$ for which $\sh_e$ is infinite. Then the orbits of $\sh_e$ on $lk(v_e, \Theta_\Gamma)$ are unbounded.
\end{lemma}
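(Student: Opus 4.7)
The plan is to identify the link $lk(v_e,\Theta_\Gamma)$ explicitly and then reduce the statement to a routine properness argument, bypassing any need to invoke the concrete Euclidean or hyperbolic geometry that underlies $\sh_e$.

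First I would unpack the link. Since $\Gamma$ is $2$-dimensional, the edge $e$ is a maximal element of $\c S^f_\Gamma$, so in the description of the local development at $v_e$ given in Section \ref{sec:delignecplx}, the upper part $lk(v_e,F_e)$ is empty and
\[
    lk(v_e,\Theta_\Gamma) \;\cong\; \widehat\Theta_e
\]
as a piecewise spherical simplicial complex carrying the Moussong metric, with each edge of length $\pi/m_e$. By Proposition \ref{prop:developdescript}, the stabilizer of $v_e$ in $\sh_\Gamma$ is $\sh_e$, and under the above identification its action on $lk(v_e,\Theta_\Gamma)$ is precisely the natural action of $\sh_e$ on $\widehat\Theta_e$ described in Proposition \ref{prop:sphericalcplx}.

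Next I would verify that $\sh_e$ acts properly on $\widehat\Theta_e$. The vertex set of $\widehat\Theta_e$ is the disjoint union of the cosets $\sh_e/\langle s\rangle$ and $\sh_e/\langle t\rangle$, and the vertex $g\langle s\rangle$ is adjacent to exactly the vertices $gs^i\langle t\rangle$ for $0\le i< p_s$. Because the vertex labels $p_s$ and $p_t$ are finite (by the standing assumption), every vertex has degree at most $\max\{p_s,p_t\}$, so $\widehat\Theta_e$ is a locally finite, and hence proper, metric graph. The stabilizer of $g\langle s\rangle$ in $\sh_e$ is the conjugate $g\langle s\rangle g^{-1}$, which is finite of order $p_s$; symmetrically for cosets of $\langle t\rangle$. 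A simplicial action on a locally finite complex with finite vertex stabilizers is automatically proper.

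To finish, since $\sh_e$ is assumed infinite but acts properly on the proper metric space $\widehat\Theta_e$, no orbit can be contained in a bounded set: such a set sits inside a compact closed ball $B$, and properness would force $\{g\in\sh_e:gv_0\in B\}$ to be finite, contradicting $|\sh_e|=\infty$. The only delicate point in the whole argument is confirming that the ``associated angular metric'' referenced in Proposition \ref{prop:keyacylprop} agrees with the Moussong metric on the link, but this is standard for the piecewise spherical links of the $\cat(0)$ polyhedral complexes constructed in Section \ref{sec:delignecplx}; after that, everything follows from local finiteness and the finiteness of edge/vertex stabilizers.
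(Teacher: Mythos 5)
Your proof is correct, but it takes a genuinely different route from the paper's. Both arguments begin identically, identifying $lk(v_e,\Theta_\Gamma)$ with $\widehat\Theta_e$ (using that $e$ is maximal in $\c S^f_\Gamma$ when $\Gamma$ is 2-dimensional) and noting that the relevant metric is the piecewise spherical length metric with edges of length $\pi/m_e$. From there the paper invokes the structure developed in Section \ref{sec:centext}: $\sh_e$ is a $\bb{Z}$-central extension of a (finite-index subgroup of a) triangle group, the central quotient of $\widehat\Theta_e$ is the $1$-skeleton of a semiregular tiling of $\bb[2]{E}$ or $\bb[2]{H}$, and the central copy of $\bb{Z}$ acts as the deck group of this infinite cover, so already the orbit of the center is unbounded. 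You instead give a soft properness argument: $\widehat\Theta_e$ is a connected, locally finite graph (degrees bounded by $\max\{p_s,p_t\}$, which are finite by the standing assumption), hence a proper geodesic space; vertex stabilizers are the finite conjugates of $\langle s\rangle$ and $\langle t\rangle$, so the simplicial action of $\sh_e$ is proper; and an infinite group acting properly on a proper space cannot have a bounded orbit. Each step checks out, including the reduction of the ``angular metric'' of Proposition \ref{prop:keyacylprop} to the length metric on the link, which the paper also uses implicitly. What the two approaches buy is slightly different: the paper's argument is constructive and pins down a specific unbounded orbit (that of the center, acting by ``vertical translations'' along the fibers over $\mathcal D$), which is consonant with the geometric picture used throughout Section \ref{sec:centext}; yours is more elementary and more robust, needing only that $\sh_e$ is infinite together with local finiteness and finiteness of the vertex stabilizers, and so would survive in settings where the explicit central-extension description is unavailable.
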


\begin{proof}
    Since $\Theta_\Gamma$ is 2-dimensional, we know that $lk(v_e, \Theta_\Gamma) \cong \widehat \Theta_e$ (see Section \ref{sec:delignecplx}). %
    Suppose $e$ has terminal vertices $i$ and $j$, with labels $p = p_i$, $m = m_{ij}$, and $q = p_j$. Since $\sh_e$ is infinite, we know that $1/p + 2/m + 1/q \leq 1$, and $\sh_e$ is a non-trivial \bb{Z}-central extension of a finite index subgroup of a triangle group in either \bb[2]{E} or \bb[2]{H} (see Section \ref{sec:centext}). As discussed in Section \ref{sec:centext}, the  quotient of $\widehat\Theta_e$ by the center of $\sh_e$ is the 1-skeleton of a semiregular tiling of \bb[2]{E} or \bb[2]{H}, and this quotient is a covering map. In particular, the group of deck transformations (which act hyperbolically) of this cover is the central copy of \bb{Z} in $\sh_e$; %
    thus the orbits of this copy of \bb{Z} are unbounded.  
\end{proof}

In order to show that the Shephard groups in question satisfy (2) of Proposition \ref{prop:keyacylprop}, we will detail the portions of the argument which must be modified, and refer the reader to \cite[\S 5]{vaskou2022acylindrical} for the full original argument.

\begin{defn} %
    Let $\Gamma$ be an extended presentation graph. %
    For vertices $s,t$ of $\Gamma$, let $\Gamma^{st}$ denote the presentation graph with the same vertex and edge sets and labels as $\Gamma$, but with the addition of an edge $e_{st}$ labeled $6$ if there is no edge between $s$ and $t$ in $\Gamma$. (If there is already an edge between $s$ and $t$, we leave $\Gamma$ unchanged.) 
    We then define the domain $K_{\Gamma^{st}}$ and metric as in \cite[Def.~5.3]{vaskou2022acylindrical}. Our complex of groups over $K_{\Gamma^{st}}$ is defined similarly as well, but we place the free product $\bb{Z}/p_s\bb{Z} * \bb{Z}/p_t\bb{Z}$ as the edge group corresponding to $e_{st}$ if this edge was added; we make no changes if there were no changes made to $\Gamma$. This is in contrast to the Artin group case, where the rank-2 free group is placed on this edge. We denote the development of this complex of groups by $\Theta^{st}_\Gamma$.
\end{defn}

With this modification, the following key lemmas still hold, with completely identical proofs after appropriate replacements are made with the definition(s) above.

\begin{lemma} \label{lem:augmentedisCAT0} \textup{\cite[Lemma 5.6]{vaskou2022acylindrical}} 
    Let $\sh_\Gamma$ be a 2-dimensional Shephard group with $|V(\Gamma)| \geq 3$, and suppose that we are in the second case of \cite[Proposition 5.2]{vaskou2022acylindrical}. Then $\Theta^{bc}_\Gamma$ is $\cat(0)$.
\end{lemma}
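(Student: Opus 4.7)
The plan is to follow Vaskou's original argument for the 2-dimensional Artin-group analogue, identifying the one point at which the Shephard setting requires a substituted input. The complex of groups defining $\Theta^{bc}_\Gamma$ is a simple complex of groups over the domain $K_{\Gamma^{bc}}$, which remains a cone with apex $v_\varnothing$ and so is simply connected. By Lemma \ref{lem:locallycat0}, it suffices to verify that the link of each vertex in its local development is $\cat(1)$ in the Moussong metric; the development $\Theta^{bc}_\Gamma$ will then be simply connected and locally $\cat(0)$, hence $\cat(0)$. For every vertex $v_\Lambda$ with $\Lambda \in \c S^f_\Gamma$ not involving the added edge $e_{bc}$, the local development is identical to that of $\Theta_\Gamma$ at $v_\Lambda$ and so is already $\cat(1)$ by Theorem \ref{thm:girth}; thus only the vertices $v_{e_{bc}}$, $v_b$, $v_c$, and $v_\varnothing$ need re-examination.

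The essential new case is the link at $v_{e_{bc}}$. Because $e_{bc}$ is maximal in $\c S^f_{\Gamma^{bc}}$ (by $2$-dimensionality), its upper link is empty and the link coincides with the lower link $\widehat\Theta_{e_{bc}}$, which by construction is the development of $\widehat{\c G}(G_{bc}, \Delta_{\{b,c\}})$ for $G_{bc} := \bb{Z}/p_b\bb{Z} \ast \bb{Z}/p_c\bb{Z}$. As in Proposition \ref{prop:sphericalcplx}, its vertices are the cosets of $\langle b\rangle$ and $\langle c\rangle$ in $G_{bc}$ with an edge whenever two cosets meet non-trivially, and Bass--Serre theory identifies this with the Bass--Serre tree of the free product splitting of $G_{bc}$. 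In particular it has no closed loops at all, so with edges of spherical length $\pi/6$ it is automatically $\cat(1)$. This is exactly the role played by the free group $F_2$ in the Artin-group proof: the only feature of the augmented-edge vertex group used there is that its coset geometry is a tree with these edge lengths, and this feature transfers seamlessly to $G_{bc}$ because $G_{bc}$ is itself a free product of cyclic groups of the appropriate orders.

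For the remaining three affected vertices $v_\varnothing, v_b, v_c$, the lower link is unchanged and only the upper link grows by the single new spoke to $v_{e_{bc}}$. I would adapt Vaskou's girth computation for these upper links verbatim: the ``case $2$'' hypothesis of \cite[Proposition 5.2]{vaskou2022acylindrical} encodes precisely the combinatorial condition on $\Gamma$ ensuring that the new spoke, of length $\pi/6$, cannot, together with the pre-existing spokes, complete a closed loop of total length $< 2\pi$. The main obstacle I anticipate is purely bookkeeping: confirming that Vaskou's argument at these three vertices genuinely uses no feature of the augmented edge group beyond the two already noted (tree coset-geometry and correct cyclic subgroups), and that the finite orders $p_b, p_c$ on the standard generators introduce no new short loops beyond those already controlled by Proposition \ref{prop:girth} as in the proof of Theorem \ref{thm:girth}. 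Once this check is made, all four link $\cat(1)$ verifications go through, $\Theta^{bc}_\Gamma$ is $\cat(0)$ by Lemma \ref{lem:locallycat0}, and the lemma follows.
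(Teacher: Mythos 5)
Your proposal is correct and follows essentially the same route as the paper, which simply asserts that Vaskou's proof of \cite[Lemma 5.6]{vaskou2022acylindrical} carries over verbatim once the rank-$2$ free group on the augmented edge is replaced by $\bb{Z}/p_b\bb{Z} \ast \bb{Z}/p_c\bb{Z}$. Your identification of the one genuinely new ingredient --- that the coset geometry of this free product is its Bass--Serre tree, hence automatically $\cat(1)$, exactly as the free group's coset geometry is in the Artin case --- is precisely the ``appropriate replacement'' the paper has in mind, and the remaining link checks are, as you say, unchanged bookkeeping.
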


\begin{lemma} \label{lem:malnormal} \textup{\cite[Lemma 5.7]{vaskou2022acylindrical}}
    Let $\sh_\Gamma$ be a 2-dimensional Shephard group with $|V(\Gamma)| \geq 3$, such that $\Gamma$ is connected and not right-angled (i.e., has some edge not labeled $2$). 
    Then there exists an edge $e$ of $\Gamma$ between vertices $a$ and $b$ with coefficient $m_{ab} \geq 3$ and an element $g \in \sh_e$ such that $\sh_e \cap g \sh_e g^{-1} = {e}$.
\end{lemma}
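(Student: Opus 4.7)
The plan is to mimic the proof of \cite[Lemma 5.7]{vaskou2022acylindrical}, using the $\cat(0)$ complex $\Theta^{bc}_\Gamma$ (Lemma \ref{lem:augmentedisCAT0}) in place of their augmented Deligne complex. Since $\Gamma$ is not right-angled, there is an edge $e = \{a,b\}$ with $m_{ab} \geq 3$, and since $|V(\Gamma)| \geq 3$ and $\Gamma$ is connected we may fix a third vertex $c \in V(\Gamma)\setminus\{a,b\}$; by construction of $\Gamma^{bc}$, the triple $\{a,b,c\}$ then spans a triangular subgraph of the augmented graph.

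I would first attempt the choice $g = c$. If $h \in \sh_e \cap g\sh_e g^{-1}$, then $h$ fixes both vertices $v_e$ and $gv_e$ of $\Theta^{bc}_\Gamma$, so by \cite[Cor.~II.2.8(1)]{bridson2013metric} the convex fixed set $\mathrm{Fix}(h)$ contains the geodesic $\gamma$ joining them. The heart of the argument is to show that $\gamma$ must enter the interior of some 2-cell of $\Theta^{bc}_\Gamma$: by Proposition \ref{prop:developdescript} every such open cell is of the form $[g_0 \sh_\varnothing < g_1 \sh_{\{s\}} < g_2 \sh_f]$ and so has trivial stabilizer, forcing $h = 1$. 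To rule out that $\gamma$ is trapped in the 1-skeleton, one performs a link analysis at $v_e$ and $gv_e$ analogous to the one in \cite{vaskou2022acylindrical}: a purely edge-path route between the two would, after projecting out the center, produce a closed loop in $\widehat \Theta_e$ of syllable length strictly below $2m_{ab}$, contradicting Proposition \ref{prop:girth}.

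The main obstacle, compared to the Artin case, is the abundant torsion in $\sh_e$: in the Artin setting $\sh_e$ is torsion-free so the $\cat(0)$ fix-point argument immediately yields $h=1$, whereas here $h$ might be a nontrivial finite-order element fixing a higher-dimensional subcomplex. This is resolved via Corollary \ref{cor:finiteorderconjugate}: a finite-order element of $\sh_\Gamma$ fixing $v_e$ must lie in $\sh_{\{a\}} \cup \sh_{\{b\}}$, and also fixing $gv_e$ would force it into $g(\sh_{\{a\}} \cup \sh_{\{b\}})g^{-1}$; using that the distinct vertex parabolics have trivial pairwise intersection and that the edge groups embed (Theorem \ref{thm:cocptaction}), a short case check eliminates these. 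A secondary worry is that the infinite center $Z(\sh_e)$ could accidentally be fixed by conjugation by $c$; if so, I would replace $g = c$ by $g = cw$ for a carefully chosen $w \in \sh_e$ (for instance a suitable reflection-type generator) so as to disrupt the centralizer coincidence, and then rerun the geodesic argument unchanged.
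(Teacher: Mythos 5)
Your overall strategy---work in the (possibly augmented) $\cat(0)$ complex, use convexity of fixed-point sets to force any $h \in \sh_e \cap g\sh_e g^{-1}$ to fix the geodesic from $v_e$ to $gv_e$, and arrange for that geodesic to meet a trivially-stabilized point---is exactly the route the paper takes: its ``proof'' is literally the assertion that Vaskou's proof of \cite[Lemma 5.7]{vaskou2022acylindrical} goes through verbatim once $\bb{Z}/p_s\bb{Z} \ast \bb{Z}/p_t\bb{Z}$ replaces the free group on the added edge. But your reconstruction has concrete gaps. First, you pick an \emph{arbitrary} third vertex $c$ and assert that $\{a,b,c\}$ spans a triangle of $\Gamma^{bc}$; the augmentation only ever adds the single edge $\{b,c\}$, so if $c$ happens to be adjacent to neither $a$ nor $b$ no triangle appears, and more importantly Lemma \ref{lem:augmentedisCAT0} is only proved under the hypothesis that one is in the second case of \cite[Proposition 5.2]{vaskou2022acylindrical}. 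That case analysis --- which dictates the choice of $a,b,c$, which edge to augment, and whether to use $\Theta_\Gamma$ or $\Theta^{bc}_\Gamma$ at all --- cannot be skipped: for a badly chosen $c$ you have no guarantee that $\Theta^{bc}_\Gamma$ is $\cat(0)$, and the fixed-point argument collapses. Second, the ``heart of the argument,'' that the geodesic must enter the interior of a 2-cell, is asserted with a justification that does not parse: there is no closed loop in sight, and Proposition \ref{prop:girth} controls the girth of the links $\widehat\Theta_e$ (this is what makes the complex $\cat(0)$), it does not route a geodesic between two specified vertices off the 1-skeleton. A geodesic in a piecewise-Euclidean $\cat(0)$ 2-complex can perfectly well run along edges, and edges of the form $[g_1\sh_{\{s\}} < g_2\sh_f]$ have nontrivial finite cyclic stabilizers; identifying which simplices the geodesic actually crosses for a well-chosen $g$ is precisely where the real work of Vaskou's proof lives, and your sketch does not supply it.

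The torsion discussion is also both misdirected and incorrect as stated. If the geodesic contains a trivially-stabilized point then $h = e$ regardless of whether $h$ has finite order, so no separate torsion case is needed; and the claim that a finite-order element fixing $v_e$ ``must lie in $\sh_{\{a\}} \cup \sh_{\{b\}}$'' is false --- Corollary \ref{cor:finiteorderconjugate} (or Corollary \ref{cor:dihedralfiniteconjugate}) only says such an element is \emph{conjugate}, by an arbitrary element of $\sh_e$, to a power of a generator, so it has the form $wa^kw^{-1}$ and need not lie in $\langle a\rangle \cup \langle b\rangle$. Likewise, ``replace $g=c$ by $g=cw$ for a carefully chosen $w$ so as to disrupt the centralizer coincidence'' is a placeholder, not an argument; the center of $\sh_e$ is handled by the same geodesic argument as everything else, provided that argument is actually carried out. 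The honest version of this proof is to reproduce Vaskou's explicit choice of $g$ and his simplex-by-simplex analysis of the geodesic, and then verify that every stabilizer encountered is the same (trivial, or intersecting $\sh_e$ trivially) in the Shephard development as in the Deligne complex --- which is exactly the content of the paper's claim that the proof is ``completely identical after appropriate replacements.''
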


Now we may complete the proof of Theorem \ref{thm:acylindricallyhyperbolic}.

\begin{proof}[Proof (of Theorem \ref{thm:acylindricallyhyperbolic})]
    We may assume $\Gamma$ is connected, otherwise $\sh_\Gamma$ splits as a free product, and each free summand is an infinite group by (4); such groups are always acylindrically hyperbolic. We may assume as well that $\Gamma(\infty)$ is not right-angled; in this case, $\sh_\Gamma$ is a graph product of non-trivial (cyclic) groups, which by \cite[Cor.~2.13]{minasyan2015acylindrical} implies $\sh_\Gamma$ is either virtually cyclic or acylindrically hyperbolic. Taking $e$ to be the edge guaranteed by (4), we know $\sh_e$ is infinite, not virtually cyclic, and embeds in $\sh_\Gamma$, so under our assumptions $\sh_\Gamma$ is not virtually cyclic (hence acylindrically hyperbolic).
    The proof of \cite[Prop.~5.2]{vaskou2022acylindrical} and Lemma \ref{lem:malnormal} imply that we may choose the edge $e$ in Lemma \ref{lem:malnormal} to be the edge $e$ guaranteed by (4). We note that this edge has label $\geq 3$, since otherwise $\sh_e$ would be a direct product of finite groups and hence finite. So Lemmas \ref{lem:part1acyl} and \ref{lem:malnormal} imply $\sh_\Gamma$ satisfy the hypotheses of Proposition  \ref{prop:keyacylprop}, and hence is acylindrically hyperbolic.
\end{proof}

\section{Relative hyperbolicity and residual finiteness}
\label{sec:relhyp}

Recall the following characterization of relative hyperbolicity, due to Bowditch \cite{bowditch2012relatively}. This phrasing comes from \cite[Def.~6.2]{osin2006relatively}.

\begin{prop} \label{prop:bowditchequiv}
    Let $G$ be a group and $\c P = \{P_1,\dots,P_n\}$ a collection of infinite finitely generated subgroups (we will call $(G, \c P)$ a \emph{group pair} and the elements of $\c P$ the \emph{peripheral subgroups}). Then $(G, \c P)$ is relatively hyperbolic if and only if $G$ admits an action on a hyperbolic graph $Y$ such that each of the following hold.
    \begin{enumerate}
        \item All edge stabilizers are finite.
        \item All vertex stabilizers are either finite or conjugate to one of the subgroups of $\c P$.
        \item The number of orbits of edges is finite.
        \item The graph $Y$ is \emph{fine}.
    \end{enumerate}
\end{prop}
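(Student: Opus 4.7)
This is the now-classical characterization of relative hyperbolicity due to Bowditch, so my plan is not to reprove it from scratch but to sketch how the equivalence follows by shuttling between the fine hyperbolic graph action and Farb's equivalent formulation via the coned-off Cayley graph together with the bounded coset penetration (BCP) property. Both directions then amount to making one description into the other.

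For the forward direction, suppose $(G,\c P)$ is relatively hyperbolic in Farb's sense. I would take $Y$ to be the coned-off Cayley graph: vertices are elements of $G$ together with one cone vertex $v_{gP_i}$ for each coset $gP_i$, and edges come from a chosen finite Cayley graph for $G$ together with unit edges from each $v_{gP_i}$ to the elements of $gP_i$. The left-multiplication action has trivial edge stabilizers, finite or peripheral vertex stabilizers (the cone vertices have stabilizers conjugate to the $P_i$), and finitely many edge orbits, yielding conditions (1)--(3); hyperbolicity of $Y$ is Farb's theorem. The real work is extracting fineness (4) from BCP: a bounded-length embedded circuit through a fixed edge can penetrate any peripheral coset only a bounded number of times and in only finitely many combinatorial patterns, so after quotienting by the trivial edge stabilizer there are only finitely many such circuits.

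For the reverse direction, given an action on a fine hyperbolic graph $Y$ satisfying (1)--(3), I would apply the Groves--Manning construction, attaching a combinatorial horoball to each $G$-orbit of vertices whose stabilizers lie in $\c P$. Fineness of $Y$ is precisely what gives the needed control over how geodesics can enter and leave these horoballs; with this in hand, one shows the cusped space is $\delta$-hyperbolic, and that $G$ acts on it properly, cocompactly away from the cusps, with peripheral subgroups acting parabolically. This is the cusped-space characterization of relative hyperbolicity, closing the loop.

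The main obstacle I expect is fineness itself. It is a local-to-global finiteness condition that is not visible at the level of quasi-isometries and not implied by hyperbolicity alone, so both deriving it from BCP (forward) and using it to upgrade $Y$ to a uniformly hyperbolic cusped space (reverse) require honest combinatorial bookkeeping of embedded circuits and of horoball penetration, rather than any formal abstraction.
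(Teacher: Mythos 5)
The paper does not prove this proposition at all: it is quoted verbatim as a known characterization, attributed to Bowditch \cite{bowditch2012relatively} with the phrasing taken from \cite[Def.~6.2]{osin2006relatively}, and is then simply \emph{used} as the working definition against which Theorem \ref{thm:relativelyhyperbolic} is verified. So there is no internal argument to compare yours against.

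As a roadmap through the literature, your outline is the standard one and is essentially sound: the equivalence is indeed established by shuttling between Bowditch's fine-graph formulation, Farb's coned-off Cayley graph with BCP, and the cusped-space picture, and you correctly identify that all of the content sits in the two implications ``BCP $\Rightarrow$ fineness of the coned-off graph'' and ``fineness $\Rightarrow$ controlled horoball geometry.'' But as written it is a proof \emph{plan}, not a proof: both pivotal steps are named and deferred rather than carried out, and those are precisely the steps one cannot wave at. Two smaller cautions. First, the Groves--Manning construction is built from a Cayley graph of $G$ with horoballs attached over the peripheral cosets, not from an arbitrary graph $Y$ satisfying (1)--(4); to run the reverse direction from such a $Y$ you either need to first reduce to (a $G$-cofinite, fine, hyperbolic) coned-off Cayley graph, or follow Bowditch's own route of building a proper hyperbolic space with a geometrically finite action directly from $Y$. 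Second, the hypotheses in the statement (finitely generated, infinite peripherals; implicitly $G$ finitely generated) are not decorative --- several of the equivalences you invoke genuinely require them, so any written-out version must track where they are used. Given that the paper treats this as citable background, citing Bowditch/Osin (or Hruska's survey of the equivalences) is the appropriate ``proof'' here; if you do want to write it out, the two deferred implications are where all the work is.
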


There are a number of equivalent definitions of a fine graph, but the one that shall be useful for us is:

\begin{defn} \cite[Def 2.1.(F5)]{bowditch2012relatively}
    Let $Y$ be a graph and
    let $d^Y$ denote the metric on $Y$ which assigns all edges length $1$. (By convention, for any graph $Y$ with such a metric, if $y$ and $z$ are in different connected components of $Y$, we say $d^Y(y,z) = \infty$.) 
    Fix a vertex $x \in V(Y)$ and let $Y \setminus x$ denote the largest full subgraph of $Y$ which avoids $x$. Let $V_x(Y)$ be the set of vertices of $Y$ which are adjacent to $x$.
    Let $d^{Y \setminus x}$ denote the induced length metric on $Y \setminus x$ coming from $d^Y$, and let $d_x$ denote the restriction of the metric $d^{Y \setminus x}$ to $V_x(Y)$ (\emph{not} the induced length metric).
    We say that $Y$ is \emph{fine} if the metric space $(V_x(Y), d_x)$ is locally finite\footnote{Every finite-radius ball is a finite set.} for every $x \in V(Y)$.
\end{defn}

In order to show that (certain) Shephard groups $\sh_\Gamma$ are relatively hyperbolic, we will use the action of $\sh_\Gamma$ on its complex $\Theta_\Gamma$. Specifically, we will let $Y = Y_\Gamma$ denote the 1-skeleton of $\Theta_\Gamma$ endowed with the edge-path metric (each edge is given length $1$). When $\Theta_\Gamma$ is Gromov hyperbolic, so too is its 1-skeleton as a metric graph under the induced length metric. But this metric graph is quasi-isometric to $Y$, and hence $Y$ is also hyperbolic. So, our first task is to determine when $\Theta_\Gamma$ is hyperbolic.

\begin{lemma} \label{lem:hypimplieshyp}
    Let $\Gamma$ be a 2-dimensional extended presentation graph. If $W_\Gamma$ is word hyperbolic, then $\Theta_\Gamma$ is Gromov hyperbolic.
\end{lemma}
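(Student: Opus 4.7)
The plan is to apply the Flat Plane Theorem of Bridson--Haefliger (\cite[Thm.~III.H.1.5]{bridson2013metric}): since $\sh_\Gamma$ acts geometrically on the CAT(0) complex $\Theta_\Gamma$ by Theorem~\ref{thm:cocptaction}, it suffices to show that $\Theta_\Gamma$ contains no isometrically embedded Euclidean plane. I would compare $\Theta_\Gamma$ with the Davis--Moussong complex $\Sigma_{W_\Gamma}$ of the associated Coxeter group: by Moussong's hyperbolization theorem, the hypothesis that $W_\Gamma$ is word hyperbolic is equivalent to $\Sigma_{W_\Gamma}$ being Gromov hyperbolic, and in particular $\Sigma_{W_\Gamma}$ contains no flat plane. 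The two complexes are assembled from the same Coxeter blocks (indexed by the spherical-type subgraphs of $\Gamma$) with the identical piecewise Euclidean metric from Definition~\ref{defn:coxeterblock}. Their links at vertices $v_\Lambda$ with $|\Lambda|\leq 1$ agree verbatim; the only combinatorial difference occurs at edge-type vertices $v_e$, where the link in $\Sigma_{W_\Gamma}$ is a single $2m_e$-cycle of circumference $2\pi$, while in $\Theta_\Gamma$ it is the richer graph $\widehat\Theta_e$ analyzed in Section~\ref{sec:syllable}---the 1-skeleton of a tiling of $\mathbb{S}^2$, $\mathbb{E}^2$, or $\mathbb{H}^2$ by $m_e$-gons all of whose boundaries have length $2\pi$.

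Assuming for contradiction that $F \subset \Theta_\Gamma$ is an isometrically embedded flat plane, at each vertex $v = v_\Lambda$ through which $F$ passes, $F$ cuts out an embedded closed geodesic of length exactly $2\pi$ in $lk(v, \Theta_\Gamma)$. For $|\Lambda| \leq 1$ these circles transfer directly to the analogous links in $\Sigma_{W_\Gamma}$. For an edge vertex $v_e$, the equality case of the girth bound of Proposition~\ref{prop:girth}, together with embeddedness of the link circle, forces this circle to bound a single $m_e$-gon of the tiling $\widehat\Theta_e$. Each such $m_e$-gon projects, under the natural surjection $\sh_\Gamma \twoheadrightarrow W_\Gamma$ obtained by adjoining the relations $s^2 = 1$ for all generators $s$, onto the unique length-$2\pi$ cycle in the link of $v_e$ in $\Sigma_{W_\Gamma}$. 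I would then assemble these local projections into a $\sh_\Gamma$-equivariant cellular map $\Phi : \Theta_\Gamma \to \Sigma_{W_\Gamma}$ which restricts to a local isometry on a neighborhood of $F$; the image $\Phi(F) \subset \Sigma_{W_\Gamma}$ would then be an isometrically embedded flat plane, contradicting the hyperbolicity of $W_\Gamma$.

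The main obstacle is the local analysis at $v_e$ when $\sh_e$ is infinite, since in that case $\widehat\Theta_e$ contains infinitely many length-$2\pi$ embedded circles (one bounding each face of the tiling) rather than the unique one present in $\Sigma_{W_\Gamma}$. The technical heart is to verify that the flat plane hypothesis forces each local link circle to bound a single $m_e$-gon and that adjacent cell-choices are globally compatible---so that $F$ cannot essentially ``spread'' through the infinite central $\mathbb{Z}$-fibers of $\widehat\Theta_e$ (described in Section~\ref{subsec:dihedralcentral}) in a way that fails to descend through $\Phi$. This is where the 2-dimensionality and the explicit structure of $\widehat\Theta_e$ as the 1-skeleton of a manifold tiling will be essential.
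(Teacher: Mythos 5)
Your opening move is the right one (Flat Plane Theorem applied to the $\cat(0)$ complex $\Theta_\Gamma$), but the route you take from there has a genuine gap: the entire construction of the equivariant projection $\Phi : \Theta_\Gamma \to \Sigma_{W_\Gamma}$ restricting to a local isometry near the flat plane, together with the local analysis at edge-type vertices, is exactly the part you flag as ``the main obstacle'' and ``the technical heart'' --- so the proof is not actually complete. Worse, the local picture you propose at $v_e$ does not match the actual structure of $\widehat\Theta_e$: it is a graph, not the $1$-skeleton of a $2$-complex with distinguished faces, and it is an infinite cyclic cover of the tiling $\mathcal D$ rather than the tiling itself. Its minimal embedded circuits have $2m_e$ edges (Proposition~\ref{prop:girth}) whereas the faces of $\mathcal D$ have only $m_e$ edges, so a length-$2\pi$ link circle does not ``bound a single $m_e$-gon''; and since $\widehat\Theta_e$ contains infinitely many such circuits which are all collapsed by $\sh_e \to W_e$ onto the single $2m_e$-cycle that is the link in $\Sigma_{W_\Gamma}$, the map $\Phi$ is very far from a local isometry at $v_e$, and arranging global compatibility of the cell-choices along $F$ is a serious unresolved problem.

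The paper's proof avoids all of this with one observation you missed: because $\Gamma$ is $2$-dimensional, every $2$-simplex of $\Theta_\Gamma$ is of the form $[g_0\sh_\varnothing < g_1\sh_{\{s\}} < g_2\sh_e]$, so a flat plane --- necessarily a subcomplex --- must pass through a vertex of type $[g\sh_\varnothing]$. The link of such a vertex is the \emph{finite} graph $lk(v_\varnothing, K)$, which depends only on $\Gamma$ and its edge labels and is identical to the corresponding link in the Deligne and Davis--Moussong complexes; it carries no information about the Shephard structure. A flat plane forces an embedded closed geodesic of length exactly $2\pi$ in that link, and by Moussong's lemma (in the form of \cite[Proof of Lemma 5]{crisp2005automorphisms}) such a circuit exists if and only if $W_\Gamma$ is not hyperbolic. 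No analysis of the infinite links $\widehat\Theta_e$, and no comparison map to $\Sigma_{W_\Gamma}$, is needed. If you want to salvage your approach, the fix is simply to test the flat plane at a rank-$0$ vertex rather than at the edge vertices.
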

\begin{proof}
    By Theorem \ref{thm:girth}, $\Theta_\Gamma$ is $\cat(0)$. So by the Flat Plane Theorem, $\Theta_\Gamma$ is Gromov hyperbolic if and only if it contains no isometrically embedded copy of \bb[2]{E}. 
    Suppose such an embedded plane exists (so $\Theta_\Gamma$ is not hyperbolic).
    This plane must be a subcomplex of $\Theta_\Gamma$, and in particular must pass through a vertex $v$ of the form $[g \sh_\varnothing]$ for some $g \in \sh_\Lambda$.
    This gives rise to an embedded loop of length exactly $2\pi$ in the link of $v$. 
    The link of such a vertex is isometric to $lk(v_\varnothing,F_\varnothing) \ast \widehat \Theta_\varnothing \cong lk(v_\varnothing,K)$ (see discussion after Definition \ref{defn:coxeterblock}).
    As in the Artin group case, such a link contains a circuit of length exactly $2\pi$ if and only if $W_\Gamma$ is not hyperbolic \cite[Proof of Lemma 5]{crisp2005automorphisms}. In summary, if $W_\Gamma$ is hyperbolic, then $\Theta_\Gamma$ has no embedded flat plane, and thus $\Theta_\Gamma$ is Gromov hyperbolic.
\end{proof}

We would like to say that this is an ``if and only if'' statement, as in the Artin group setting \cite{charney2007relative}, but are unsure how to proceed. For the Artin groups, this relies on the existence of an embedding of the Davis complex in the Deligne complex, which is not yet known to exist for arbitrary Shephard groups. However, if such an embedding exists, then the reverse implication is immediate.

\begin{prop}
    Suppose $\Gamma$ is a 2-dimensional extended presentation graph such that the Davis complex $\Sigma_\Gamma$ for the Coxeter group $W_\Gamma$ embeds isometrically in the complex $\Theta_\Gamma$. Then if $\Theta_\Gamma$ is Gromov hyperbolic, $W_\Gamma$ must be word hyperbolic.
\end{prop}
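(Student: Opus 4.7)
The plan is to use the Flat Plane Theorem in the reverse direction: since $W_\Gamma$ acts properly and cocompactly by isometries on its Davis complex $\Sigma_\Gamma$ (which is $\cat(0)$ with the Moussong metric), the Flat Plane Theorem says $W_\Gamma$ is word hyperbolic if and only if $\Sigma_\Gamma$ contains no isometrically embedded copy of $\mathbb{E}^2$. So it suffices to show that $\Sigma_\Gamma$ contains no isometric flat plane.

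First I would record the following chain of implications. Assume for contradiction that $W_\Gamma$ is not word hyperbolic. Then by Moussong's theorem (equivalently, the Flat Plane Theorem applied to the geometric $W_\Gamma$-action on $\Sigma_\Gamma$), there exists an isometric embedding $\iota : \mathbb{E}^2 \hookrightarrow \Sigma_\Gamma$. Composing with the hypothesized isometric embedding $j : \Sigma_\Gamma \hookrightarrow \Theta_\Gamma$ yields an isometric embedding $j \circ \iota : \mathbb{E}^2 \hookrightarrow \Theta_\Gamma$. But $\Theta_\Gamma$ is assumed to be Gromov hyperbolic, and Theorem \ref{thm:girth} tells us it is also $\cat(0)$, so by the Flat Plane Theorem applied in $\Theta_\Gamma$ it cannot contain an isometrically embedded Euclidean plane. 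This contradiction forces $W_\Gamma$ to be word hyperbolic.

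The only subtlety is making sure the hypothesis is used correctly: ``isometric embedding'' must mean with respect to the Moussong metric on both sides, which is precisely what the hypothesis provides (the Moussong metric on $\Theta_\Gamma$ was defined in Section \ref{sec:delignecplx} using Coxeter block metrics, agreeing on the appropriate faces with the standard Moussong metric on $\Sigma_\Gamma$). Because isometric embeddings preserve geodesics, a flat plane in $\Sigma_\Gamma$ maps to a flat plane in $\Theta_\Gamma$, so no further argument about extending flats is needed.

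I do not expect any real obstacle beyond the invocation of the two Flat Plane Theorems; the entire argument is a two-line contrapositive once the embedding hypothesis is granted. The nontrivial content sits in the hypothesis itself (the existence of the isometric embedding $\Sigma_\Gamma \hookrightarrow \Theta_\Gamma$), which the proposition explicitly assumes rather than proves.
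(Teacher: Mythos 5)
Your proposal is correct and follows essentially the same route as the paper: assume $W_\Gamma$ is not hyperbolic, extract a flat plane in $\Sigma_\Gamma$ via the Flat Plane Theorem, push it through the isometric embedding into $\Theta_\Gamma$, and contradict Gromov hyperbolicity. The paper's proof is the same two-line contrapositive, so no further comment is needed.
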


\begin{proof}
    Suppose $W_\Gamma$ is not hyperbolic. Then $\Sigma_\Gamma$ is not hyperbolic (since $W_\Gamma$ is quasiisometric to $\Sigma_\Gamma$), so it contains an embedded flat plane. Since $\Sigma_\Gamma$ embeds isometrically in $\Theta_\Gamma$, we have that $\Theta_\Gamma$ also contains an embedded flat plane, so is not hyperbolic.
\end{proof}

The existence or non-existence of such an embedding is outside our current scope of consideration. (Although it is natural to conjecture that there is always such an embedding, since it exists for Artin groups and can be constructed case-by-case for finite Shephard groups.)

Now we show that $Y$ is fine. First recall the following notation. 
    If $X$ is a cell complex and $x \in X$, the open star $st(x) = st(x,X)$ is the union of all open cells containing $x$, the closed star $St(x) = St(x,X)$ is the (topological) closure of the open star, and the boundary of the closed star is $\partial St(x) = St(x) \setminus st(x)$. 

\begin{lemma} \label{lem:combstays}
    Suppose $\Gamma$ is 2-dimensional. %
    Let $x$ be a vertex of $\Theta_\Gamma$ (equivalently, of $Y_\Gamma$) of the form $[g \sh_e]$ for $g \in \sh_\Gamma$ and an edge $e$ of $\Gamma$.
    Let $\ell_x$ denote the induced length metric on $\partial St(x) \subseteq \Theta_\Lambda$. Then there is a $C \geq 1$ and $D \geq 0$ such that $\ell_x(y,z) \leq Cd_x(y,z) + D$ for all $y,z \in V_x(Y)$. 
\end{lemma}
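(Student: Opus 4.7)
The plan is to exploit the CAT(0) structure of $\Theta_\Gamma$ (Theorem~\ref{thm:girth}) by constructing a coarsely Lipschitz retraction from $V(Y)\setminus\{x\}$ to $V_x$ using the direction-from-$x$ map. Translating by $g^{-1}$, I may assume $g=e$, so $x=v_e=[\sh_e]$. By the local-development description in Section~\ref{sec:delignecplx}, $\partial St(x)=lk(x,\Theta_\Gamma)$ is the subdivision of $\widehat\Theta_e$ obtained by bisecting each edge; its vertex set is exactly $V_x$, and the induced length metric $\ell_x$ coincides with the angular metric at $x$ in $\Theta_\Gamma$, because in each Coxeter-block 2-simplex containing $x$ the length of the $\partial St(x)$-edge opposite $x$ is exactly the angle at $x$, namely $\pi/(2m_{ij})\le\pi/4$. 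Since $\sh_\Gamma$ acts cocompactly on $\Theta_\Gamma$ with finitely many Coxeter-block isometry types (Theorem~\ref{thm:cocptaction}), there are uniform constants: a maximum $Y$-edge Moussong length $L$, and a positive minimum $d_0$ on the Moussong distance between distinct vertices of $\Theta_\Gamma$.

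Define $\rho\colon V(Y)\setminus\{x\}\to\partial St(x)$ by sending $v$ to the initial direction at $x$ of the CAT(0) geodesic $[x,v]$, regarded as a point of $lk(x)=\partial St(x)$; in particular $\rho(v)=v$ for $v\in V_x$. The key estimate is that there is a uniform constant $C'<\pi$ such that for every edge $\{v,w\}$ of $Y$ with $v,w\ne x$, the angle $\angle_x(v,w)\le C'$; via the identification of $\ell_x$ with the angular metric this reads $\ell_x(\rho(v),\rho(w))\le C'$. I then round $\rho(v)$ to a closest vertex $r(v)\in V_x$; since every edge of $\partial St(x)$ has length at most $\pi/4$, the rounding error is at most $\pi/8$ per endpoint, and $r$ is the identity on $V_x$. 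Hence every $Y$-edge avoiding $x$ satisfies $\ell_x(r(v),r(w))\le C'+\pi/4=:C$. Given $y,z\in V_x$ and a $d_x$-geodesic $y=w_0\sim w_1\sim\dots\sim w_n=z$ in $Y\setminus x$ of length $n=d_x(y,z)$, applying $r$ vertex-wise and using the triangle inequality in $(\partial St(x),\ell_x)$ yields $\ell_x(y,z)\le Cn=Cd_x(y,z)$, giving the lemma with the above $C$ and additive constant $0$.

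The main obstacle will be securing the uniform angle bound $C'<\pi$. Naive Euclidean comparison with legs of length $\ge d_0$ and base of length $\le L$ only gives $\cos\bar\angle_x\ge 1-L^2/(2d_0^2)$, which forces $\bar\angle_x<\pi$ when $L<2d_0$ but may fail to be informative for label configurations where $L\ge 2d_0$. I would handle the general case in two parts: when $\max(d(x,v),d(x,w))$ exceeds some threshold $R_0$, Euclidean comparison gives $\cos\bar\angle_x$ close to $1$ and so $\bar\angle_x$ close to $0$; when both $v,w$ lie in the ball of Moussong radius $R_0$ around $x$, cocompactness of the $\sh_e$-action on $\partial St(x)$ (with finite vertex stabilizers) together with the finiteness of Coxeter-block isometry types reduces the supremum of $\angle_x(v,w)$ over these configurations to a maximum over finitely many $\sh_e$-orbits of triples $(x,v,w)$, and this maximum is strictly less than $\pi$ because in our CAT(0) complex the geodesic $[v,w]$ cannot pass through the distinct vertex $x$, so each triangle $\triangle(x,v,w)$ is strictly non-degenerate. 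Combining these two regimes yields the desired universal $C'<\pi$.
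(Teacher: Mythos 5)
Your overall architecture (retract $V(Y)\setminus\{x\}$ onto the link and chain edge-by-edge along a $d_x$-geodesic) is workable in principle, but the step you yourself flag as the main obstacle --- the uniform angle bound $C'<\pi$ --- has a genuine gap in the bounded regime, and the gap is not cosmetic. Your reduction there rests on the claim that the edges of $Y$ with both endpoints in the Moussong ball $B(x,R_0)$ fall into finitely many $\mathrm{Stab}(x)$-orbits. This is false: $\Theta_\Gamma$ is not locally finite, and $\mathrm{Stab}(x)=g\sh_e g^{-1}$ does not act cofinitely on bounded balls about $x$. Concretely, take $\Gamma$ a path $s-t-u$ with spherical edges $e=\{s,t\}$ and $e'=\{t,u\}$, both with $\sh_e,\sh_{e'}$ infinite, and $x=[\sh_e]$. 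The vertex $w=[\sh_{e'}]$ lies at bounded distance from $x$, and the edges $\{[h\sh_\varnothing],[h\sh_{\{u\}}]\}$ for $h\in\sh_{e'}$ all lie in a fixed bounded neighborhood of $x$; two of them lie in the same $\mathrm{Stab}(x)$-orbit only when $h_2h_1^{-1}\in\sh_e$, so they represent $[\sh_{e'}:\sh_e\cap\sh_{e'}]$-many orbits, which is infinite since $\sh_e\cap\sh_{e'}$ is finite. So the supremum of $\angle_x(v,w)$ over the bounded regime is not a maximum over finitely many configurations, and ``each individual triangle is nondegenerate'' gives no uniform bound. (There are also smaller inaccuracies you would need to repair --- e.g.\ the length of the edge of $\partial St(x)$ opposite $x$ is not literally the angle at $x$, only uniformly bi-Lipschitz to it --- but the orbit-finiteness failure is the real obstruction.)

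The paper avoids angles entirely and the argument is worth internalizing: the inclusion $Y\setminus x\hookrightarrow\Theta_\Gamma\setminus st(x)$ is a quasi-isometry for the respective induced length metrics, with constants $C,D$; given $y,z\in V_x(Y)$, a geodesic $\gamma$ from $y$ to $z$ in $\Theta_\Gamma\setminus st(x)$ has length at most $Cd_x(y,z)+D$; and since $St(x)$ is convex (a union of Euclidean triangles with an acute angle at the common vertex $x$), the closest-point projection $\Theta_\Gamma\to St(x)$ is a $1$-Lipschitz retraction which restricts to a retraction $\Theta_\Gamma\setminus st(x)\to\partial St(x)$. Pushing $\gamma$ through this projection gives a path in $\partial St(x)$ from $y$ to $z$ of length at most $\ell(\gamma)$, whence $\ell_x(y,z)\leq Cd_x(y,z)+D$. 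The convexity-plus-projection step is exactly the global tool that replaces your pointwise angle estimate and sidesteps the non-local-finiteness of $\Theta_\Gamma$; if you want to salvage your approach, you would need to prove the uniform angle bound by some such global argument rather than by orbit counting.
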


\begin{proof}
    Let $\iota : Y_\Gamma \to \Theta_\Gamma$ denote the inclusion map which realizes the quasi-isometry of $Y_\Gamma$ with the edge length metric and $\Theta_\Gamma$ with the Moussong metric. 
    The restriction of $\iota$ to $Y \setminus x$ is a quasi-isometry onto $\Theta_\Gamma \setminus st(x)$, under the respective induced length metrics $d^{Y\setminus x}$ and $d^{\Theta_\Gamma \setminus st(x)}$. 
    We choose our $C$ and $D$ to be the constants from this restricted quasi-isometry, i.e., those constants which satisfy
    \[
        \frac{1}{C} d^{Y\setminus x}(y,z) - D \leq d^{\Theta_\Gamma \setminus st(x)}(y,z) \leq C d^{Y\setminus x}(y,z) + D
    \]
    for all $y,z \in Y \setminus \{x\}$.  Recall that $d_x$ is the restriction of $d^{Y \setminus x}$ to $V_x(Y)$, so if we restrict ourselves to elements $y,z$ of $V_x(Y)$, then we can say  
    \[
        \frac{1}{C} d_x(y,z) - D \leq d^{\Theta_\Gamma \setminus st(x)}(y,z) \leq C d_x(y,z) + D
    \]
    
    Fix $y,z \in V_x(Y)$. Let $\gamma$ be a geodesic in $\Theta_\Gamma \setminus st(x)$ from $y$ to $z$. Then
    \[
        \ell(\gamma) = d^{\Theta_\Gamma \setminus st(x)}(y,z) \leq C d_x(y,z) + D.
    \]
    Let $\rho : \Theta_\Gamma \to St(x)$ be the closest point projection onto the closed star $St(x)$ (the map which sends a point $p \in X$ to a point $\rho(p) \in St(x)$ which minimizes $d^{\Theta_\Gamma}(p, \rho(p))$). 
    Note that $St(x)$ is a convex set: it is made up of Euclidean right triangles which have one of their acute angles meeting at the common vertex $x$. Thus $\rho$ is a well-defined, distance non-increasing retraction \cite[Prop.~II.2.4(4)]{bridson2013metric}, and in particular restricts to a well-defined distance non-increasing retraction 
    $\rho : \Theta_\Gamma \setminus st(x) \to \partial St(x)$. 
    So $\rho(\gamma)$ is a (rectifiable) path in $\partial St(x)$ between $y$ and $z$, implying
    \[
        \ell_x(y,z) \leq \ell(\rho(\gamma)) \leq \ell(\gamma) \leq C d_x(y,z) + D. \qedhere
    \]

\end{proof}

We can now complete the proof of Theorem \ref{thm:relativelyhyperbolic}.

\relativelyhyperbolic*

\begin{proof} %
    When $W_\Gamma$ is hyperbolic, then $\Theta_\Gamma$ is hyperbolic by Lemma \ref{lem:hypimplieshyp}. In particular, $Y$ is a hyperbolic graph since it is quasi-isometric to the 1-skeleton of $\Theta_\Gamma$. By Proposition \ref{prop:developdescript}, the edge stabilizers of $\sh_\Gamma$ acting on $Y$ are either finite cyclic (coming from the subgroups generated by the vertices) or trivial. Similarly, the vertex stabilizers are the conjugates of $\sh_\Lambda$ for $\Lambda \in \c S^f$. And since this is a cocompact action with a strict fundamental domain, there are finitely many orbits of edges. Once we show that $Y$ is fine, the result will follow by Proposition  \ref{prop:bowditchequiv}.

    Since the action of $\sh_\Gamma$ on $Y$ has a strict fundamental domain, we may restrict our consideration to vertices of the form $v_\Lambda$ for $\Lambda \in \c S^f$ (using notation from Section \ref{sec:delignecplx}).
    Since $\Gamma$ is 2-dimensional, there are three types of vertices $x = v_\Lambda$ to consider: $\Lambda = \varnothing$, $\Lambda = \{s\}$ a single vertex, or $\Lambda = e$ a single edge.
    If $\Lambda = \varnothing$, then, as a set, $V_x(Y)$ is simply the vertex set of $lk(v_\varnothing,F_\varnothing)$, which is finite. Similarly, if $\Lambda = \{s\}$, then $V_x(Y)$ is the vertex set of the join of $lk(v_{\{s\}},F_{\{s\}})$ and $\widehat \Theta_{\{s\}}$, which are both finite, and hence $V_x(Y)$ is finite. 

    Suppose $\Lambda = e$ is an edge between vertices $s$ and $t$ of $\Gamma$. 
    Since $\Theta_\Gamma$ is a piecewise Euclidean simplicial complex with finitely many isometry types of faces, $lk(x,\Theta_\Gamma)$ is isometric to a sufficiently small sphere centered at $x$, and this sphere is a radial deformation retract of $\partial St(x)$. This graph isomorphism is actually a quasi-isometry. 
    We also know that $\widehat \Theta_e$ with the Moussong metric is quasi-isometric to $\widehat \Theta_e$ with the metric $d^{\widehat \Theta_e}$ which assigns all edges length $1$. 
    In particular, there are $C' \geq 1$ and $D' \geq 0$ such that $d^{\widehat \Theta_e}(y,z) \leq C' \ell_x(y,z) + D'$ for all $y,z \in \partial St(x)$ (or $\widehat \Theta_e$). 
    
    Now, let $C$ and $D$ be the constants guaranteed by Lemma \ref{lem:combstays}. 
    Fix a vertex $y$ of $\partial St(x)$ (or $V_x(Y)$). For $N  > 0$, define
    \begin{align*}
        V_N &= \{\, z \in V_x(Y) : d_x(y,z) \leq N \,\}, \\
        L_N &= \{\, z \in \widehat \Theta_e : d^{\widehat \Theta_e}(y,z) \leq C'(CN + D) + D' \,\}.
    \end{align*}
    Let $z \in V_N$. Then 
    \[
        d^{\widehat \Theta_e}(y,z) \leq C' \ell_x(y,z) + D' \leq C'( C d_x(y,z) + D) + D' \leq C'(CN + D) + D'.
    \]
    $\ell_x(y,z) \leq C d_x(y,z) + D \leq CN + D$, so $z \in S_N$. Hence $V_N \subseteq L_N$ for all $N > 0$. 
    We know that $\widehat\Theta_e$ is a locally finite graph: the vertices of $\widehat \Theta_e$ coming from cosets of $\langle s \rangle$ have valence $p_s$ and the vertices coming from cosets of $\langle t \rangle$ have valence $p_t$, both of which we have assumed to be finite. %
    This implies $L_N$ is finite for every $N$, and hence so is $V_N$. Therefore $V_K(x)$ is locally finite and $Y_\Gamma$ is fine.
\end{proof}

This result is in stark contrast to the Artin group case, where it is very uncommon for Artin groups to be relatively hyperbolic rather than just weakly relatively hyperbolic. We can leverage this not only to show many nice properties of Shephard groups, but also of many Artin groups as well. In the following section, we will discuss an application to Artin groups, but first we will detail some easy consequences of relatively hyperbolicity for Shephard groups. 

\relhypconsequences*

\begin{proof}
    Each property in the list holds for all of $\sh_\Gamma$ if and only if it holds for the peripheral subgroups. In more detail:
    \begin{enumerate}
        \item The dihedral Shephard groups are linear, hence have solvable word problem. This implies $\sh_\Gamma$ has solvable word problem by \cite{farb1998relatively}. 
        
        \item If $(G, \c P)$ is a relatively hyperbolic pair, then it was shown in \cite{tukia1994convergence} that any subgroup of $G$ which does not contain a free group is either finite, virtually infinite cyclic, or is contained in an element of $\c P$. 
        Since the peripheral subgroups of $\sh_\Lambda$ are linear, they satisfy the Tits alternative, so we can conclude $\sh_\Lambda$ does as well. 
        
        \item By \cite{bell2008asymptotic}, asymptotic dimension of finitely generated groups is preserved by commensurability. Since the 3-dimensional integral Heisenberg group and the universal central extensions of surface groups have finite asymptotic dimension, so too do the peripheral subgroups of $\sh_\Lambda$. By \cite{osin2005asymptotic}, this implies $\sh_\Gamma$ has finite asymptotic dimension.
        
        \item 
        If $e = \{i,j\}$ is an edge of $\Gamma$ with $1/p_i + 2/m_{ij} + 1/p_j = 1$, then $\sh_e$ has polynomial growth since it is commensurable to the 3-dimensional integral Heisenberg group; groups of polynomial growth have the rapid decay property by \cite{jolissaint1990rapidly}.
        If $e = \{i,j\}$ is an edge of $\Gamma$ with $1/p_i + 2/m_{ij} + 1/p_j < 1$, then $\sh_e$ is a \bb{Z}-central extension of a hyperbolic group; such groups have the rapid decay property by \cite{noskov1992algebras}.
        This implies $\sh_\Gamma$ has the rapid decay property by \cite{dructu2005relatively}.
    \end{enumerate}

    Last, if each edge $\{i,j\}$ of $\Gamma$ satisfies $1/p_i + 2/m_{ij} + 1/p_j \not= 1$, then each peripheral subgroup is biautomatic by Proposition \ref{prop:negbiauto}. This implies $\sh_\Lambda$ is biautomatic by \cite{rebbechi2001algorithmic}. 
\end{proof}

A more substantial corollary is residual finiteness for certain 2-dimensional Shephard groups and their Artin groups.
To discuss residual finiteness, we begin by recalling the notion of a relatively geometric action recently introduced by Einstein and Groves.

\begin{defn} \cite[Def 1.1]{einstein2022relatively}
    Suppose $(G, \c P)$ is a group pair. An action of $G$ on a cell complex $X$ is relatively geometric (with respect to $\c P)$ if
    \begin{enumerate}
         \item $X / G$ is compact,
         \item Each group in $\c P$ acts elliptically on $X$, and
         \item Each stabilizer of a cell in $X$ is either finite, or conjugate to a finite index subgroup of an element of $\c P$.
     \end{enumerate} 
\end{defn}

It is clear from Proposition \ref{prop:developdescript} that if $G = \sh_\Gamma$ is a 2-dimensional Shephard group and $\c P$ is as defined previously, then the action of $\sh_\Gamma$ on its complex $\Theta_\Gamma$ is relatively geometric with respect to $\c P$. We want to make use of:

\begin{prop} \label{prop:residcondition} \textup{\cite[Cor 1.7]{einstein2022relatively}}
    Suppose $(G, \c P)$ is relatively hyperbolic and acts relatively geometrically (with respect to $\c P$) on a $\cat(0)$ cube complex $X$. If every $P \in \c P$ is residually finite, then $G$ is residually finite.
\end{prop}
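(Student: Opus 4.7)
Since Proposition \ref{prop:residcondition} is quoted from work of Einstein--Groves, my proof proposal is really a sketch of their strategy. The plan is to combine relative Dehn filling with Wise's malnormal special quotient theorem (MSQT) to leverage residual finiteness of the peripheral subgroups $P \in \c P$ against the cubical action on $X$.

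Given a nontrivial $g \in G$, the goal is to construct a finite quotient of $G$ separating $g$ from the identity. The first step is to choose, for each $P \in \c P$, a sufficiently deep finite-index normal subgroup $N_P \trianglelefteq P$, and to form the relative Dehn filling $\overline G = G/\langle\langle \bigcup_P N_P \rangle\rangle$. By the relative filling theorem of Osin and Groves--Manning, for sufficiently deep $N_P$ the quotient $\overline G$ is word hyperbolic, the image of each $P$ is finite (namely $P/N_P$), and one may arrange that $\overline g \ne e$: either by unwinding $g$ along the hyperbolic geometry of $\overline G$ when $g$ is not peripheral, or by choosing $N_P$ to avoid $g$ (using residual finiteness of $P$) when $g$ lies in a peripheral.

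The second step is to upgrade this to a geometric action of $\overline G$ on a $\cat(0)$ cube complex. The hypothesis of relative geometricity means that the only infinite cell stabilizers for the $G$-action on $X$ are commensurable with peripherals; after Dehn filling, these become finite, so the induced action of $\overline G$ on an appropriate quotient complex is cocompact with finite stabilizers. By the MSQT, $\overline G$ is then virtually compact special, hence linear and residually finite by Haglund--Wise. Pulling back a finite quotient of $\overline G$ that separates $\overline g$ from the identity produces the required finite quotient of $G$.

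The main obstacle is compatibility between filling and the cubical structure: the filling kernels $N_P$ must be arranged so that $\langle\langle \bigcup_P N_P \rangle\rangle$ acts on $X$ in a way that the quotient is again a $\cat(0)$ cube complex, and so that no new collapses occur among the remaining cells. The flexibility afforded by the \emph{relatively geometric} framework --- where cell stabilizers need only be commensurable with, rather than equal to, peripherals --- gives exactly the room Einstein--Groves need to invoke Wise's quasiconvex hierarchy machinery and conclude virtual specialness of $\overline G$; this is where the bulk of the technical work lies.
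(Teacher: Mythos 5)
This proposition is quoted verbatim from Einstein--Groves \cite[Cor 1.7]{einstein2022relatively}; the paper supplies no proof of its own, so there is nothing internal to compare against. Your sketch is a faithful outline of the actual Einstein--Groves argument: peripherally finite relative Dehn fillings (chosen deep enough, using residual finiteness of the peripherals, to survive a given nontrivial $g$), the fact that for sufficiently long such fillings the quotient is hyperbolic and inherits a proper cocompact action on a $\cat(0)$ cube complex, and then virtual specialness to conclude the filled quotient is residually finite. The only quibble is attribution at the last step: for a hyperbolic group acting properly and cocompactly on a $\cat(0)$ cube complex one invokes Agol's virtual specialness theorem (of which Wise's MSQT is an ingredient), rather than the MSQT directly; and you correctly flag that the real technical content --- showing the filled group still acts nicely on a cube complex --- is where Einstein--Groves do their work.
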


We have a characterization of when $\sh_\Gamma$ is relatively hyperbolic; if we can determine when $\Theta_\Gamma$ is a $\cat(0)$ cube complex, we will determine a class of residually finite Shephard groups.

\begin{lemma} \label{lem:2dfc}
    If $\Gamma$ is 2-dimensional and type FC, then $\Theta_\Gamma$ is a $\cat(0)$ cube complex under the ``cubical metric''. 
\end{lemma}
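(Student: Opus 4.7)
The plan is to equip $\Theta_\Gamma$ with its natural cubical cell structure and then invoke Gromov's link condition. From Section \ref{sec:delignecplx}, the cells of $K_\Gamma$ (and hence of $\Theta_\Gamma$) are the combinatorial cubes $F_{\Lambda_1} \cap F_{\Lambda_2}^*$ with $\Lambda_1 \subseteq \Lambda_2$ in $\c S^f_\Gamma$, each of dimension $|V(\Lambda_2)| - |V(\Lambda_1)|$. Under the 2-dimensional hypothesis these dimensions lie in $\{0,1,2\}$, and the type FC hypothesis ensures every edge of $\Gamma$ is spherical, so honest 2-cubes exist. The cubical metric I plan to use declares each such $d$-cube to be an isometric copy of a unit Euclidean $d$-cube.

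Once that metric is in place, Gromov's link condition reduces CAT(0)-ness to two things: simple connectedness of $\Theta_\Gamma$, and flagness of every vertex link. Simple connectedness is free, since Theorem \ref{thm:girth} already equips the same underlying complex with a CAT(0), hence contractible, Moussong metric. So all the work is in verifying that every vertex link is a flag simplicial complex. Because $\Theta_\Gamma$ is 2-dimensional, each such link is a 1-dimensional simplicial complex, and for these, being flag is equivalent to being triangle-free.

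I would then compute the vertex links in the three possible cases $\Lambda \in \{\varnothing, \{s\}, e\}$ for a vertex $[g\sh_\Lambda]$. Using Proposition \ref{prop:developdescript} together with the chain description of the cubes, one finds that $lk([g\sh_\varnothing])$ is isomorphic to $\Gamma$ as a 1-dimensional simplicial complex; that $lk([g\sh_{\{s\}}])$ is the complete bipartite graph $K_{p_s, \deg_\Gamma(s)}$, with one part indexed by $\langle s \rangle$ and the other by the edges of $\Gamma$ incident to $s$; and that $lk([g\sh_e])$ is a bipartite graph whose two parts are indexed by the cosets of $\sh_{\{s\}}$ and of $\sh_{\{t\}}$ inside $g\sh_e$, where $e = \{s,t\}$. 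The second and third links are bipartite and therefore triangle-free. The first is triangle-free because the 2D and FC hypotheses together forbid any complete subgraph of $\Gamma$ on three vertices: such a triangle would be complete, hence spherical by FC, contradicting the fact (from 2D) that spherical-type subgraphs have at most two vertices.

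The main subtlety I anticipate is in the third case, where I must confirm that the bipartite ``combinatorial link'' is a genuine simplicial graph rather than a multigraph, so that $\Theta_\Gamma$ really is a cube complex in the Bridson--Haefliger sense. Two distinct 2-cubes at $[g\sh_e]$ produce the same pair of link vertices precisely when their lower corners in $g\sh_e$ differ by an element of $\sh_{\{s\}} \cap \sh_{\{t\}}$, so the issue reduces to showing this intersection is trivial inside $\sh_e$. This is exactly the triviality of edge stabilizers in $\widehat \Theta_e$ recorded at the end of Proposition \ref{prop:sphericalcplx}, so it comes for free.
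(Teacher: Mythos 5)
Your proposal is correct and follows the same overall strategy as the paper: metrize the cubes $F_{\Lambda_1}\cap F_{\Lambda_2}^*$ as unit Euclidean cubes, get simple connectedness for free from the $\cat(0)$ Moussong metric, and verify Gromov's flag link condition vertex type by vertex type. The paper organizes the link computation slightly differently --- it writes each link as the spherical join $lk(v_\Lambda,F_\Lambda)\ast\widehat\Theta_\Lambda$, cites \cite[Lemma 4.3.4]{charney1995k} for flagness of the first factor under the FC hypothesis, and handles the second factor in the edge case by quoting the girth bound $\geq 2m_{st}\geq 4$ --- but this amounts to the same case analysis you carry out by hand. The one genuine divergence is at vertices of type $[g\sh_e]$: where the paper excludes $3$-cycles in $\widehat\Theta_e$ via the syllable-length machinery behind Proposition \ref{prop:girth}, you observe that the link is bipartite and hence automatically triangle-free, at the cost of checking separately that it is a simple graph, which you correctly reduce to the triviality of $\langle s\rangle\cap\langle t\rangle$ in $\sh_e$, i.e.\ to the trivial edge stabilizers recorded in Proposition \ref{prop:sphericalcplx}. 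Your route is marginally more elementary, since flagness of this link then does not depend on the quantitative girth estimate; the paper's route gets simplicity of the graph for free from the coset description of the development and reuses an estimate it needs anyway for Theorem \ref{thm:girth}. Either way the argument is complete.
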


\begin{proof}
    The cubical metric on the complex $\Theta_\Gamma$ is defined as follows: rather than metrize the cell $F_{\Lambda_1} \cap F_{\Lambda_2}^*$ to be a Coxeter block as in Definition \ref{defn:coxeterblock}, we simply give it the metric of a standard Euclidean cube $[0,1]^n$ (since this cell is combinatorially a cube).
    Under this metric, the link of a vertex $v_\Gamma$ is still isometric to the spherical join of $lk(v_\Lambda, F_\Lambda)$ and $\widehat \Theta_\Lambda$, where the metric on these complexes now assigns edge lengths of $\pi/2$.
     
    In order to show that $\Theta_\Gamma$ is a $\cat(0)$ cube complex, we must show that the links are flag complexes. Since the spherical join of flag complexes is flag, this is equivalent to showing $lk(v_\Lambda, F_\Lambda)$ and $\widehat \Theta_\Lambda$ are flag for all $\Lambda \in \c S^f$. This is well known for $lk(v_\Lambda, F_\Lambda)$ since $\Gamma$ is type FC \cite[Lemma 4.3.4]{charney1995k}. If $\Lambda = \varnothing$ or a single vertex $\{s\}$, then $\widehat \Theta_\Lambda$ is either empty or a finite set, resp., so there is nothing to check. So, suppose $\Lambda$ is an edge between vertices $s$ and $t$. Since $\widehat \Theta_\Lambda$ is a graph (i.e., a 1-dimensional simplicial complex), it suffices to show that it contains no 3-cycles. But by Theorem \ref{thm:girth}, the girth of this graph is $\geq 2m_{st} \geq 4$. 
\end{proof}

Thus we may conclude,

\residuallyfinite*

We note that the condition that $\Gamma$ is 2-dimensional and type FC is equivalent to requiring $\Gamma$ has no 3-cycles (sometimes called ``triangle-free''). For a triangle-free presentation graph, having no 4-cycles with all edges labeled $2$ is equivalent to $W_\Gamma$ being hyperbolic (originally due to Moussong in \cite{moussong1988hyperbolic}, rephrased in terms of the presentation graph in \cite[Lemma 5]{crisp2005automorphisms} or \cite[Prop.~3.1]{charney2007relative}). 

\begin{proof}
    Since $W_\Gamma$ is hyperbolic, $\sh_\Gamma$ is hyperbolic relative to its infinite spherical-type edge subgroups (Theorem \ref{thm:relativelyhyperbolic}). Since $\Gamma$ is 2-dimensional and type FC, $\Theta_\Gamma$ is a $\cat(0)$ cube complex under the cubical metric (Lemma \ref{lem:2dfc}) and the action is relatively geometric with respect to the spherical-type edge subgroups. Since the dihedral Shephard groups are residually finite (Proposition \ref{prop:edgesresidfinite}), we conclude that $\sh_\Gamma$ is residually finite (Proposition \ref{prop:residcondition}).
\end{proof}

In tandem with Corollary \ref{cor:finiteorderconjugate}, we can show

\begin{cor}
    If $\Gamma$ is triangle-free and has no 4-cycle with all labels $2$, then $\sh_\Gamma$ is virtually torsion-free.
\end{cor}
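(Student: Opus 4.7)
The plan is to combine the two preceding results: Corollary \ref{cor:residuallyfinite} gives residual finiteness of $\sh_\Gamma$ under these hypotheses, and Corollary \ref{cor:finiteorderconjugate} tells us that every torsion element of $\sh_\Gamma$ is conjugate to a power of a standard generator. Together, these say that all torsion is ``accounted for'' by finitely many conjugacy classes, and residual finiteness lets us separate each one from the identity using a finite-index subgroup.

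More precisely, I would proceed as follows. Let $V(\Gamma) = \{v_1, \dots, v_n\}$ and let $p_i$ denote the (finite) order of $v_i$. By Corollary \ref{cor:finiteorderconjugate}, every nontrivial finite-order element of $\sh_\Gamma$ lies in the conjugacy class of some $v_i^k$ with $1 \le k \le p_i - 1$. In particular, the set
\[
    T = \{\, v_i^k : 1 \le i \le n,\ 1 \le k \le p_i - 1\,\}
\]
is a finite set of representatives for all nontrivial torsion conjugacy classes in $\sh_\Gamma$.

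By Corollary \ref{cor:residuallyfinite}, $\sh_\Gamma$ is residually finite. Hence, for each $g \in T$, there is a finite-index subgroup $H_g \le \sh_\Gamma$ with $g \notin H_g$. Replacing $H_g$ by its normal core, we may assume $H_g$ is normal and finite-index (and still avoids $g$, since the core is contained in $H_g$). Set
\[
    N = \bigcap_{g \in T} H_g.
\]
This is a finite intersection of finite-index normal subgroups, so $N$ is itself a finite-index normal subgroup of $\sh_\Gamma$.

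Finally, $N$ is torsion-free. Indeed, suppose $h \in N$ has finite order. If $h \ne 1$, then by Corollary \ref{cor:finiteorderconjugate} there exist $g \in T$ and $x \in \sh_\Gamma$ with $h = x g x^{-1}$. But then $g = x^{-1} h x \in x^{-1} N x = N \subseteq H_g$ by normality, contradicting $g \notin H_g$. Hence $h = 1$, so $N$ is torsion-free of finite index, which is precisely the statement that $\sh_\Gamma$ is virtually torsion-free. The only non-routine ingredients are the two preceding corollaries, and there is no real obstacle beyond assembling them in this order.
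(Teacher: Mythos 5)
Your proof is correct and follows essentially the same route as the paper's: use Corollary \ref{cor:residuallyfinite} to separate the finitely many nontrivial powers of standard generators from the identity by finite-index (normal) subgroups, intersect, and use Corollary \ref{cor:finiteorderconjugate} plus normality to conclude the resulting finite-index subgroup is torsion-free. The only cosmetic difference is that you take normal cores of each $H_g$ before intersecting, whereas the paper intersects first and takes the normal core at the end; these are interchangeable.
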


\begin{proof}
    Let $s \in V(\Gamma)$ be a standard generator of $\sh_\Gamma$. Since $\sh_\Gamma$ is residually finite, there is a finite index subgroup $G_s < \sh_\Gamma$ which avoids $s$ (meaning $s \not\in G_s$), and by taking finitely many intersections, we can choose $G_s$ to avoid all nontrivial powers of $s$. 
    Let $G = \bigcap_{s \in V(\Gamma)} G_s$, a finite index subgroup of $\sh_\Gamma$ avoiding all nontrivial powers of standard generators.
    Let $N = \bigcap_{g \in \sh_\Gamma} g^{-1} G g < G$. It is a standard exercise to show that $N$ (the ``normal core'' of $G$) is finite index when $G$ is finite index.
    Suppose $g \in N$ has finite order. 
    By Corollary \ref{cor:finiteorderconjugate}, we can write $g = h^{-1}s^kh$ for some $h \in \sh_\Gamma$, some $s \in V(\Gamma)$, and some $k \in \bb{Z}$. The definition of $N$ implies $g \in h^{-1}Gh$, so $s^k \in G$. Since the only power of a generators contained in $G$ is $e$, we must have that $s^k = e$, and consequently $g = e$. Thus $N$ is torsion-free.
\end{proof}

\section{Application to Artin groups}
 \label{sec:artinresid}

We now establish the analogue of Corollary \ref{cor:residuallyfinite} for Artin groups.
First, we recall the definition of the Deligne complex of an Artin group to establish the notation and terminology which we will use, as it may differ from some references.

\begin{defn}
    Let $\Gamma$ be any presentation graph and $A_\Gamma \c S^f = \{\,a A_\Lambda : a \in A_\Gamma, \Lambda \in \c S^f\,\}$ ordered by inclusion. 
    The \emph{Deligne complex} $\Phi_\Gamma$ of $A_\Gamma$ is $|(A_\Gamma \c S^f)'|$, the geometric realization of the derived complex $(A_\Gamma \c S^f)'$.  An $n$-simplex of $\Phi_\Gamma$ is written as 
    \begin{align*}
        [ a_0 A_{\Lambda_0} < a_1 A_{\Lambda_1} < \dots < a_n A_{\Lambda_n} ]
    \end{align*} 
    where $a_0 A_{\Lambda_0} < a_1 A_{\Lambda_1} < \dots < a_n A_{\Lambda_n}$ is a chain of elements of $A_\Gamma \c S^f$. For $\Lambda \in \c S^f_\Gamma$, the \emph{spherical Deligne complex} $\widehat \Phi_\Lambda$ is the simplicial complex whose vertices are all cosets of $A_{\widehat s}$ for $s \in \Lambda$, with a set of $n+1$ vertices spanning an $n$-simplex if and only if they have nontrivial (global) intersection\footnote{By $\widehat s$, we mean the largest full subgraph of $\Lambda$ which does not contain $s$.}.
\end{defn}

$A_\Gamma$ acts on $\Phi_\Gamma$ with a strict fundamental domain isomorphic to $K_\Gamma$. Thus we may endow the fundamental domain of $\Phi_\Gamma$ with the ``same'' metric as $\Theta_\Gamma$ (the Moussong metric). For the vertex $v_\Lambda = [A_\Lambda]$ of $\Phi_\Gamma$, the link $lk(v_\Lambda, \Phi_\Gamma)$ is isometric to the spherical join
\[
    lk(v_\Lambda, F_\Lambda) * \widehat\Phi_\Lambda
\] 
under this metric \cite{charney1995k}, where $F_\Lambda$ is defined as in Section \ref{sec:delignecplx}.

One of the key properties which will allow us to pass between an Artin group and its Shephard quotients is product separability.
The following is one of the many equivalent notions of product separability.

\begin{defn}
    Let $G$ be any group. We say that $G$ is \emph{product separable} if for any (finite) collection $\{G_1,\dots,G_n\}$ of finitely generated subgroups of $G$,
    their product $G_1G_2\cdots G_n$ is closed in the profinite topology on $G$.
\end{defn}

Recall that the profinite topology on $G$ is the topology whose basis of closed sets are the finite-index subgroups of $G$. So it is equivalent to say that $G_1G_2\cdots G_n$ is an intersection of finite-index subgroups of $G$. 

\begin{lemma}\label{lem:virtuallyproductseparable}
    Let $G$ be any group and $H$ a finite-index subgroup.
    Then $G$ is product separable if and only if $H$ is product separable.
\end{lemma}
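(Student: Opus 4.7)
The plan is to prove both implications separately, relying on the topological fact that when $H$ has finite index in $G$, the profinite topology on $H$ coincides with the subspace topology it inherits from the profinite topology on $G$. This holds because intersecting a finite-index subgroup of $G$ with $H$ gives a finite-index subgroup of $H$, and conversely every finite-index subgroup $N$ of $H$ is finite index in $G$, so the normal core of $N$ in $G$ is a finite-index subgroup of $G$ contained in $N$. I will also use throughout that finite-index subgroups of finitely generated groups are finitely generated, and that finite unions of closed sets are closed.

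For the direction ``$G$ product separable implies $H$ product separable'', I take finitely generated $H_1,\dots,H_n \leq H$, view them as finitely generated subgroups of $G$, and note that $H_1\cdots H_n$ is closed in the profinite topology on $G$ by hypothesis; by the topological fact above, it is then closed in the profinite topology on $H$ as well.

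For the reverse direction, I will first reduce to the case that $H$ is normal in $G$ by replacing $H$ with its normal core, which is finite index in both $G$ and $H$ and product separable by the easy direction just established. Given finitely generated $G_1,\dots,G_n \leq G$, each $G_j \cap H$ is finitely generated and finite index in $G_j$, so $G_j$ decomposes as a finite disjoint union $\bigsqcup_i a_{j,i}(G_j \cap H)$. Expanding $G_1\cdots G_n$ into a finite union of summands of the form $a_{1,i_1}(G_1 \cap H)\cdots a_{n,i_n}(G_n \cap H)$, I will use normality of $H$ to slide all coset representatives to the left, rewriting each summand as $a \cdot K_1 K_2 \cdots K_n$, where $a \in G$ and each $K_j$ is a $G$-conjugate of $G_j \cap H$ and therefore still a finitely generated subgroup of $H$. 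Product separability of $H$ then gives that $K_1 \cdots K_n$ is closed in the profinite topology of $H$.

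The final step, and the main (minor) obstacle, will be to promote closedness in $H$ to closedness in $G$ for subsets of $H$: given $g \in G$ not in $K_1\cdots K_n$, if $g \notin H$ then $gH$ separates it from the set, and if $g \in H$ one uses closedness in $H$ to find a finite-index subgroup $N$ of $H$ with $gN$ disjoint from $K_1\cdots K_n$, then replaces $N$ by its normal core in $G$ to obtain a finite-index subgroup of $G$ still separating $g$. This shows $K_1 \cdots K_n$ is closed in $G$, so each translate $a \cdot K_1 \cdots K_n$ is closed, and $G_1 \cdots G_n$, being the finite union of such translates, is closed in the profinite topology of $G$ as required.
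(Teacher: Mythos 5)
Your proof is correct and follows essentially the same route as the paper: reduce to the normal core, split each $G_j$ into cosets of $G_j \cap H$, use normality of $H$ to collect the coset representatives into a single translate of a product of $G$-conjugates of the $G_j \cap H$, apply product separability of $H$, and pass closedness from the profinite topology of $H$ up to that of $G$ (a step the paper treats more tersely than you do). The only cosmetic difference is that you slide the representatives to the left where the paper slides them to the right.
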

\begin{proof}
    Suppose $G$ is product separable.
    Let $\{H_1,\dots,H_n\}$ be a collection of finitely generated subgroups of $H$. %
    The inclusion map $H \hookrightarrow G$ is continuous for \emph{any} subgroup; the intersection of $H$ with a finite index subgroup of $G$ is finite index in $H$ regardless of the index of $H$ in $G$. In particular, $H_1H_2 \cdots H_n$ is closed in $G$ and is contained in $H$, so $H_1H_2 \cdots H_n$ is closed in $H$.  

    Now suppose $H$ is product separable.
    By replacing $H$ with the normal core of $H$, we may assume $H$ is normal in $G$. (When $H$ is finite index, so is its normal core, and product separability is inherited by subgroups.)
    Now let $\{G_1,\dots,G_n\}$ be a collection of finitely generated subgroups of $G$, and let $H_i = G_i \cap H$ for each $i$.
    Each $H_i$ is finitely generated since the $G_i$ are, 
    and $H_i$ is finite index in $G_i$ for each $i$. So, for each $i$, let $\{h_1^{(i)}, \dots, h_{k_i}^{(i)}\}$ be a system of representatives of the left cosets of $H_i$ in $G_i$.
    For $j = (j_1,\dots,j_n)$, let $S_j = h_{j_1}^{(1)} H_1h_{j_2}^{(2)} H_2 \dots h_{j_n}^{(n)} H_n$.
     Then 
    \begin{align*}
        G_1G_2\cdots G_n = \bigcup_{j =(j_1,\dots,j_n)} S_j.
    \end{align*}
    We claim that each $S_j$ is closed in $G$. For convenience, fix $j$ and let $h_i = h_{j_i}^{(i)}$. 
    Then we can rewrite $S_j$ as
    \begin{align*}
        S_j = (h_1 H_1h_1^{-1})(h_1h_2 H_2 (h_1h_2)^{-1}) (h_1h_2h_3H_3(h_1h_2h_3)^{-1}) \dots & \\ \dots (h_1h_2\cdots h_{n-1} H_{n-1} (h_1h_2\cdots h_{n-1} )^{-1})(h_1h_2\cdots h_n H_n).&
    \end{align*}
    Since $H$ is normal in $G$,
    \begin{align*}
        S_j (h_1h_2\cdots h_n )^{-1} = (h_1 H_1h_1^{-1})(h_1h_2 H_2 (h_1h_2)^{-1}) (h_1h_2h_3H_3(h_1h_2h_3)^{-1}) \dots & \\ \dots (h_1h_2\cdots h_n H_n (h_1h_2\cdots h_n )^{-1})&
    \end{align*}
    is the product of finitely generated subgroups of $H$, thus closed in $H$.
    Since $H$ is finite index in $G$, it is closed in $G$ as well. 
    The profinite topology is invariant under multiplication (from the left or right), so $S_j$ is also closed in $G$. Thus $G_1G_2\cdots G_n$ is the union of finitely many closed subsets of $G$, so is itself closed.
\end{proof}

\begin{cor} \label{cor:dihedralartinsep}
    Dihedral Artin groups are product separable.
\end{cor}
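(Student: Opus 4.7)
The plan is to reduce product separability of the dihedral Artin group $A(m) = A_{\{i,j\}}$ to that of a group of the form $F_n \times \mathbb{Z}$ via a finite-index subgroup, invoke a known product separability result for the latter, and apply Lemma \ref{lem:virtuallyproductseparable}.

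First I would dispose of the case $m = 2$: here $A(2) \cong \mathbb{Z}^2$, and every finitely generated subgroup is closed in the profinite topology on a finitely generated abelian group. Finite products of such subgroups are themselves subgroups (the product is abelian), so product separability is immediate.

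For $m \geq 3$, the goal is to exhibit a finite-index subgroup of $A(m)$ isomorphic to $F_n \times \mathbb{Z}$ for some $n$. The center $Z = Z(A(m))$ is infinite cyclic (generated by the square of the Garside element, or equivalently by the central word identified analogously to the Shephard case in Section \ref{subsec:dihedralcentral}), and the quotient $A(m)/Z$ is virtually free: it is a free product of two finite cyclic groups (the underlying ``triangle group'' with one parameter set to $\infty$ collapsed by the central relation), hence virtually free by Kurosh. Pick a finite-index free subgroup $F \leq A(m)/Z$ and let $\widetilde F \leq A(m)$ be its preimage. Then $\widetilde F$ is a $\mathbb{Z}$-central extension of $F$; since $H^2(F;\mathbb{Z}) = 0$ for free $F$, the extension splits, giving $\widetilde F \cong F \times \mathbb{Z}$.

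Next I would invoke that $F_n \times \mathbb{Z}$ is product separable. The cleanest route is to observe that $F_n \times \mathbb{Z}$ is a right-angled Artin group (on the star graph $K_{1,n}$ with the central vertex corresponding to the $\mathbb{Z}$-factor), hence the fundamental group of a compact special cube complex; product separability then follows from Minasyan's results on virtually special/RAAG groups (or alternatively by a direct argument combining the Ribes--Zalesskii theorem on free groups with the explicit description of finitely generated subgroups of $F_n \times \mathbb{Z}$ via their projections to the two factors). Lemma \ref{lem:virtuallyproductseparable} then promotes product separability from the finite-index subgroup $\widetilde F$ to all of $A(m)$.

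The main obstacle is the invocation for $F_n \times \mathbb{Z}$, since product separability does not generally pass to direct products (already $F_2 \times F_2$ is notoriously ill-behaved for separability properties). What makes $F_n \times \mathbb{Z}$ tractable is that one factor is cyclic, and care must be taken to cite or adapt the correct result from the literature rather than appealing to a false general principle.
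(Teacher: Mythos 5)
Your proposal is correct and follows essentially the same route as the paper: both reduce to the fact that a dihedral Artin group with label $m \geq 3$ is virtually $\mathbb{F}_n \times \mathbb{Z}$ (the paper cites \cite[Lemma 4.3]{huang2016cocompactly} where you rederive it via the central extension over the virtually free central quotient, which is fine), invoke product separability of $\mathbb{F}_n \times \mathbb{Z}$, and lift through Lemma \ref{lem:virtuallyproductseparable}. The one point to fix is the citation for the key input: the paper uses You \cite{you1997product} for product separability of $\mathbb{Z} \times \mathbb{F}_n$, and you should cite that (or carry out your direct Ribes--Zalesskii argument) rather than appealing to results on special cube complexes or RAAGs, since product separability is not known for general RAAGs and, as you note, already fails badly for $\mathbb{F}_2 \times \mathbb{F}_2$; the paper also separately records the edgeless case $\mathbb{F}_2$ via Ribes--Zalesskii \cite{ribes1993profinite}, which your write-up omits.
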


\begin{proof}
    Suppose $A$ is a dihedral Artin group with edge label $m$. If $m = 2$, then $A \cong \bb[2]{Z}$, and product separability in this case is an easy exercise (following directly from the fact that abelian groups are subgroup separable, i.e., every f.g.~subgroup is closed). If $m > 2$, then $A$ is virtually $\bb{Z} \times \bb F_n$, where $\bb F_n$ is a rank-$n$ free group \cite[Lemma 4.3]{huang2016cocompactly}. By \cite{you1997product}, $\bb{Z} \times \bb F_n$ is product separable for any $n$, so by Lemma \ref{lem:virtuallyproductseparable}, so is $A$. 

    By \cite[Thm.~2.1]{ribes1993profinite}, the rank-2 free group $\bb{F}_2$ is also product separable; this is the dihedral Artin group whose graph is two vertices not joined by an edge.
\end{proof}

\begin{lemma} \label{lem:prodsepimpliesfinitequotient}
    If $G$ is any group and $C$ is closed in the profinite topology on $G$, then for all $g \not\in C$, there exists a finite group $F$ and a surjective morphism $\varphi : G \to F$ such that $\varphi(g) \not\in \varphi(C)$. 
\end{lemma}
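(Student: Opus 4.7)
The plan is to unwind the paper's description of the profinite topology in order to produce a finite-index normal subgroup $N \trianglelefteq G$ satisfying $gN \cap C = \emptyset$; the desired quotient is then $\varphi : G \twoheadrightarrow G/N$.

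First, I would translate the description ``finite-index subgroups form a basis of closed sets'' into the more convenient statement that cosets of finite-index \emph{normal} subgroups form a basis of open sets. This uses the standard observation that every finite-index subgroup $H \leq G$ contains its normal core $\bigcap_{a \in G} a^{-1} H a$, which is itself finite-index and normal; hence any basic open set can be refined to a coset of a finite-index normal subgroup.

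Next, since $C$ is closed and $g \notin C$, the complement $G \setminus C$ is an open neighborhood of $g$. By the previous step, I can find a finite-index normal subgroup $N$ with $gN \subseteq G \setminus C$, i.e., $gN \cap C = \emptyset$. Taking $F = G/N$ (finite, since $[G:N] < \infty$) and $\varphi : G \to F$ the natural quotient, I would verify the separation property: if $\varphi(g) = \varphi(c)$ for some $c \in C$, then $cN = gN$, so $c \in gN \cap C$, contradicting the choice of $N$.

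There is no real obstacle here: the statement is essentially a direct reformulation of the definition of closedness in the profinite topology in terms of finite quotients, and the only care required is in passing from arbitrary finite-index subgroups (the objects in the paper's basis) to finite-index normal subgroups (the ones giving honest group quotients). This passage is standard via the normal-core construction.
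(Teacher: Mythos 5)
Your argument is correct, but it is not the paper's argument, and the difference is more than cosmetic. The paper takes its stated convention literally: it writes $C$ as the intersection of the finite-index subgroups of $G$ containing it, picks one such subgroup $H$ with $g \notin H$, passes to the normal core $K$ of $H$, and notes that $CK \subseteq H$ while $gK \not\subseteq H$. You instead use the standard description of the profinite topology (cosets of finite-index normal subgroups as a basis of open sets): $G \setminus C$ is an open neighborhood of $g$, so some coset $gN$ with $N$ finite-index normal misses $C$, and $G \to G/N$ separates. These routes rest on genuinely different hypotheses, because a set closed in the standard profinite topology need not be an intersection of finite-index subgroups --- any nonempty intersection of subgroups contains the identity, and, for instance, $\langle a\rangle\langle b\rangle$ in the free group $F_2$ is profinitely closed (by the very product-separability results the paper invokes) yet is contained in no proper subgroup at all. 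So your hypothesis is strictly weaker than the one the paper's proof actually consumes, and since the lemma is applied to products $STS\cdots$ whose closedness is supplied by product-separability theorems stated for the standard topology, your version is the one that covers the intended application without an extra unverified step. The one inaccuracy in your write-up is the claim that the two descriptions of the topology are interchangeable via normal cores: taking finite-index subgroups as a basis of \emph{closed} sets yields a strictly coarser topology than taking cosets of finite-index normal subgroups as a basis of \emph{open} sets (the example above witnesses this). Since your proof only ever uses the open-coset description, this does not affect its validity, but the ``translation'' should be replaced by simply adopting the standard definition.
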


\begin{proof}
    Since $C$ is closed in the profinite topology, it is the intersection of the finite-index subgroups of $G$ containing it. At least one of these subgroups, say $H$, must avoid $g$. Let 
    $K$ 
    denote the normal core of $H$, a finite-index normal subgroup of $G$ contained in $H$. 
    Let $\varphi$ denote the standard quotient map $G \to G/K$. 
    Then $gK \not\subset CK$ since $CK \subseteq H$, implying $\varphi(g) \not\in \varphi(C)$.
\end{proof}

The following lemma is the main ingredient in the proof of residual finiteness for these Artin groups.

\begin{lemma} \label{lem:dihedralartinquotient}
    Suppose $\Lambda = e$ is a single edge with label $q < \infty$. Let $x,y \in \widehat \Phi_\Lambda$ be any two points.
    Then there is some $N \in \bb{Z}_{> 0}$ such that for all $k \geq 1$, the images of $x$ and $y$ under the quotient map $\widehat \Phi_\Lambda \to \widehat \Theta_{\Lambda(kN)}$ remain at the same distance.
\end{lemma}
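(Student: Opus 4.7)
The plan is to reduce the distance-preservation claim to a statement about separating $y$ from certain product sets in the profinite topology on $A_\Lambda$, and then to invoke Corollary \ref{cor:dihedralartinsep}. First, since both complexes have unit-length edges, the distance between arbitrary points is determined by the finitely many (at most four) vertex-to-vertex distances between endpoints of their containing edges, so it suffices to choose $N$ working simultaneously for these finitely many vertex pairs. By $A_\Lambda$-equivariance of the quotient, we may further assume $x = \langle s \rangle$; we focus on the case $y = a \langle s \rangle$, as $y = a \langle t \rangle$ is symmetric. Set $n = d(x, y)$.

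The key combinatorial observation is that an edge path of length $2j$ in $\widehat \Phi_\Lambda$ from $\langle s \rangle$ to a vertex $b \langle s \rangle$ exists if and only if $b$ lies in the alternating product set $P_{2j+1} := \langle s \rangle \langle t \rangle \langle s \rangle \cdots \langle s \rangle$ (with $2j+1$ factors), and the same characterization holds in $\widehat \Theta_{\Lambda(kN)}$ with $\langle \bar s \rangle, \langle \bar t \rangle$ in place of $\langle s \rangle, \langle t \rangle$. Moreover these product sets correspond under the quotient homomorphism: $\pi_{kN}(P_m) = \bar P_m$. Consequently $d(x,y) = n$ forces $a \notin P_{n-1}$, and showing $d(\bar x, \bar y) \geq n$ reduces to showing $\bar a \notin \pi_{kN}(P_{n-1})$.

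By Corollary \ref{cor:dihedralartinsep}, $P_{n-1}$ is closed in the profinite topology on $A_\Lambda$, so there is a finite-index normal subgroup $K \trianglelefteq A_\Lambda$ with $aK \cap P_{n-1} = \emptyset$. Let $N_0$ be the least common multiple of the orders of the images of $s$ and $t$ in the finite quotient $A_\Lambda/K$; then $s^{N_0}, t^{N_0} \in K$, so the quotient $A_\Lambda \twoheadrightarrow A_\Lambda/K$ factors through $\sh_{\Lambda(N_0)}$. Choose $N$ to be any positive multiple of $N_0$ that also exceeds the absolute values of all exponents in a fixed geodesic-word expression for $y$. Then for every $k \geq 1$, since $N_0 \mid kN$, the map $A_\Lambda \twoheadrightarrow A_\Lambda/K$ factors through $\sh_{\Lambda(kN)}$, giving $\pi_{kN}(a) \notin \pi_{kN}(P_{n-1})$ and hence the lower bound $d(\bar x, \bar y) \geq n$. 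Meanwhile each exponent of the geodesic word, being nonzero with absolute value less than $N \leq kN$, remains nontrivial modulo $kN$, so the geodesic descends to a length-$n$ path in $\widehat \Theta_{\Lambda(kN)}$, giving the matching upper bound.

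The principal technical obstacle is the notational bookkeeping around the product-set characterization of distances, in particular matching parities of path lengths with the number of factors in the products and carefully verifying the correspondence $\pi_{kN}(P_m) = \bar P_m$; the algebraic core of the argument is a direct application of product separability.
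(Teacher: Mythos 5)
Your proposal is correct and follows essentially the same route as the paper's proof: reduce to the (at most four) vertex pairs on the edges containing $x$ and $y$, characterize the vertex-to-vertex distance by membership in an alternating product of the cyclic subgroups $\langle s\rangle$ and $\langle t\rangle$, apply product separability of dihedral Artin groups (Corollary \ref{cor:dihedralartinsep}) to separate the relevant element from the shorter product in a finite quotient, and take $N$ to be the least common multiple of the orders of $s$ and $t$ in that quotient so the separation persists in $\sh_{\Lambda(kN)}$ for every $k$. The only cosmetic differences are that your product-set formulation absorbs the bipartite parity argument that the paper makes explicitly, and your extra requirement that $N$ exceed the exponents of a geodesic word is unnecessary since the quotient is $1$-Lipschitz, giving the upper bound for free.
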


\begin{proof}
    Suppose $s$ and $t$ are the vertices of $e$, let $S = \langle s \rangle$ and $T = \langle t \rangle$, and let $E$ be the edge between $S$ and $T$ in $\widehat \Phi_\Lambda$. 
    First, suppose $x$ and $y$ are vertices. 
    By symmetry and translating, we may assume $x = S$. 
    Let $\gamma = E_1E_2\cdots E_n$ be a geodesic edge path connecting $x$ and $y$. 
    Notice that $E_1$ and $E$ both contain $x = S$; this means there is some $k_1 \in \bb{Z}$ such that $E_1 = s^{k_1} E$ (we could have $k_1 = 0$ if $E_1 = E$). 
    Similarly, $E_2$ and $E_1$ both contain the vertex $s^{k_1}T$, so there is some $k_2 \in \bb{Z}$ such that $E_2 = s^{k_1} t^{k_2} E_1$. Since $\gamma$ is a geodesic, it is locally embedded, so $k_2 \not= 0$. 
    Repeating this, for all $i > 1$, we see $E_{i} = \alpha_i E_{i-1}$, where
    \[\alpha_i = \underbrace{s^{k_1}t^{k_2}s^{k_3} \cdots}_{i \text{ syllables}}, \] 
    for some $k_i \not= 0$. (A syllable is a word of the form $s^j$ or $t^j$.)
    
    Let $\alpha = \alpha_n$.
    Notice that
    \[
        \alpha \not\in C \coloneqq \underbrace{STS \cdots}_{n-1 \text{ cosets}},
    \]
    since otherwise we could reverse the above argument to construct a path from $x$ to $y$ with strictly shorter length than $\gamma$. 
    Since $A_\Lambda$ is product separable (Corollary \ref{cor:dihedralartinsep}), $C$ is closed in the profinite topology on $A_\Lambda$, so we can find a finite group $F$ and surjective morphism $\varphi : A_\Lambda \to F$ so that $\varphi(\alpha) \not\in \varphi(C)$ (Lemma \ref{lem:prodsepimpliesfinitequotient}).
    Let $p_s$ and $p_t$ be the orders of $\varphi(s)$ and $\varphi(t)$, respectively, and let $N = \lcm\{p_s,p_t\}$.
    Fix $k \geq 1$, and let $\overline{\,\cdot\,} : A_\Lambda \to \sh_{\Lambda(kN)}$ denote the standard quotient map. 
    The orders of $\varphi(s)$ and $\varphi(t)$ divide the orders of $\overline s$ and $\overline t$, resp., or in other words, $s^{kN}, t^{kN} \in \ker(\varphi)$. Since $\ker( \overline{\,\cdot\,} )$ is the normal closure of $s^{kN}$ and $t^{kN}$ in $A_\Lambda$, it follows that $\ker( \overline{\,\cdot\,} ) \leq \ker(\varphi)$. This means there exists a surjection $\rho : \sh_{\Lambda(kN)} \to F$ which makes the diagram
    \[\begin{tikzcd}[cramped,column sep=tiny]
        {A_\Lambda} && {F} \\
        & {\mathrm{Sh}_{\Lambda(kN)}}
        \arrow["{\varphi}", from=1-1, to=1-3]
        \arrow["{ \overline{\,\cdot\,} }"', from=1-1, to=2-2]
        \arrow["{\rho}"', dashed, from=2-2, to=1-3]
    \end{tikzcd}\]
    commute.
    This implies in particular that $\overline \alpha \not\in \overline C$; 
    otherwise
    $\varphi(\alpha) = \rho( \overline \alpha) \in \rho(\overline C) = \varphi(C)$. %
    
    By abuse of notation, let $\overline{\,\cdot\,} : \widehat \Phi_\Lambda \to \widehat \Theta_{\Lambda(kN)}$ denote the map induced by the quotient $A_\Lambda \to \sh_{\Lambda(kN)}$. 
    We claim that $\overline \gamma$ is still a geodesic between $\overline x$ and $\overline y$. 
    Let $\gamma'$ be a geodesic from $\overline x$ to $\overline y$ in $\widehat \Theta_{\Lambda(kN)}$, say $\gamma' = E_1'\cdots E_m'$. 
    By the same reasoning in $\widehat \Phi_\Lambda$,  for each $i$ we can find $\ell_i \in \bb{Z}$ so that $E_i' = \alpha_i' E_{i-1}'$, where 
    \[\alpha_i' = \underbrace{\overline s^{\ell_1}\overline t^{\ell_2}\overline s^{\ell_3} \cdots}_{i \text{ syllables}}, \] 
    and $\ell_i \not= 0$ when $i > 1$. 
    Let $\alpha' = \alpha_m'$.  

    Notice $E_m'$ and $\overline E_n$ meet at $\overline y$. 
    Since $\widehat\Theta_{\Lambda(kN)}$ is bipartite, we know $m$ and $n$ have the same parity. If they are odd, let $h = t$ and $H = T$, and if they are even, let $h = s$ and $H = S$. In either case, $\overline y$ is a coset of $\overline H$, so there is some $\ell_{m+1} \in \bb{Z}$ such that $\overline \alpha = \alpha' \overline h^{\ell_{m+1}}$. Writing out $\alpha'$, we see
    \[
        \overline \alpha
        =( \underbrace{\overline s^{\ell_1}\overline t^{\ell_2} \overline s^{\ell_3}\cdots}_{m \text{ syllables}} )\overline h^{\ell_{m+1}} 
        = \underbrace{\overline s^{\ell_1}\overline t^{\ell_2} \overline s^{\ell_3}\cdots}_{m+1 \text{ syllables}}.
    \]
    Since $\overline \alpha \not\in \overline C = \overline S\,\overline T\,\overline S \cdots$ ($n-1$ cosets), we know $m+1 > n-1$, i.e., $m > n-2$. But since $m$ and $n$ are the same parity, $m \geq n$. Thus $\overline \gamma$ is a geodesic and $d(\overline x, \overline y) = \ell(\overline \gamma) = (\pi/q)n = \ell(\gamma) = d(x,y)$. 
    
    Now suppose $x$ and $y$ are any two points of $\widehat \Phi_\Lambda$. If $x$ is a vertex, let $X_1 =X_2 = x$, and if $x$ is not a vertex, let $X_1$ and $X_2$ be the two distinct vertices of the edge containing $x$. Similarly, if $y$ is a vertex, let $Y_1 = Y_2 = y$, and if $y$ is not a vertex, let $Y_1$ and $Y_2$ be the two distinct vertices of the edge containing $y$. For $i,j \in\{ 1,2\}$,
    let $\gamma_{ij}$ be a geodesic from $X_i$ to $Y_j$. 
    For each $i,j$, by our work above we can find $N_{ij} \geq 1$ so that $\gamma_{ij}$ remains a geodesic of the same length under the quotient to $\widehat \Theta_{\Lambda(kN_{ij})}$ for each $k \geq 1$.
    Let $N = \lcm\{N_{ij}\}$ and fix $k \geq 1$. Let $\overline{\,\cdot\,} : \widehat\Phi_\Lambda \to \widehat\Theta_{\Lambda(kN)}$ be the quotient map. 
    Then each $\overline{\gamma_{ij}}$ is a geodesic in $\widehat \Theta_{\Lambda(kN)}$ of the same length as $\gamma_{ij}$.
    Suppose $\gamma$ is a geodesic from $\overline x$ to $\overline y$ in $\widehat\Theta_{\Lambda(kN)}$. 
    Then there is some $I$ and some $J$ so that $\gamma$ passes through $\overline {X_I}$ and $\overline {Y_J}$. 
    Note that since $\overline{\,\cdot\,}$ is a simplicial map, $d(\overline x, \overline{X_I}) = d(x, X_I)$ and $d(\overline y, \overline{Y_J}) = d(y,Y_J)$. 
    Then
    \begin{align*}
        d(\overline x, \overline y) 
        &= \ell(\gamma) \\
        &= \ell(\gamma|_{[\overline x, \overline X_I]})
         + \ell(\gamma|_{[\overline X_I, \overline Y_J]})
         + \ell(\gamma|_{[\overline Y_J, \overline y]}) \\
        &= d(\overline x, \overline X_I) + \ell(\gamma|_{[\overline X_I, \overline Y_J]}) + d(\overline Y_J, \overline y) \\
        &= d(\overline x, \overline X_I) + \ell(\overline \gamma_{IJ}) + d(\overline Y_J, \overline y) \\
        &= d(x,X_I) + \ell(\gamma_{IJ}) + d(Y_J, y) \\
        &= d(x,X_I) + d(X_I,Y_J) + d(Y_J, y) \\
        &\geq d(x,y).
    \end{align*}
    Since the quotient clearly cannot increase distance, $d(x,y) = d(\overline x, \overline y)$. 
\end{proof}

\begin{lemma} \label{lem:keyartresid}
    Suppose $\Gamma$ is a 2-dimensional presentation graph.
    Let $\gamma$ be a geodesic segment of $\Phi_\Gamma$. 
    Then there is some $k \geq 2$ so that the image of $\gamma$ under the natural quotient $\Phi_\Gamma \to \Theta_{\Gamma(k)}$ remains a geodesic segment of the same length.
\end{lemma}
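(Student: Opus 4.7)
The plan is to use the local-to-global characterization of geodesics in $\cat(0)$ piecewise Euclidean complexes and reduce to finitely many applications of Lemma \ref{lem:dihedralartinquotient}. Recall that in such a complex (built from finitely many isometry types of cells), a rectifiable path is a geodesic iff it travels as a straight segment inside each cell and, at every vertex it meets, the incoming and outgoing directions have link distance $\geq \pi$; moreover a compact geodesic meets only finitely many cells \cite[II.5]{bridson2013metric}. So let $w_1,\dots,w_n$ be the vertices of $\Phi_\Gamma$ met by $\gamma$, write $w_j = g_j v_{\Lambda_j}$ with $\Lambda_j \in \c S^f_\Gamma$, and let $\xi_j^\pm \in lk(w_j, \Phi_\Gamma)$ be the incoming and outgoing directions of $\gamma$, so that $d(\xi_j^-,\xi_j^+) \ge \pi$.

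As in Section \ref{sec:delignecplx} (with $\widehat\Phi$ in place of $\widehat\Theta$), each such link decomposes as a spherical join
\[ lk(w_j,\Phi_\Gamma) \;\cong\; lk(v_{\Lambda_j},F_{\Lambda_j}) \ast \widehat\Phi_{\Lambda_j}, \]
and likewise $lk(\overline w_j,\Theta_{\Gamma(k)}) \cong lk(v_{\Lambda_j},F_{\Lambda_j}) \ast \widehat\Theta_{\Lambda_j(k)}$. Since $\Gamma$ is 2-dimensional, $\Lambda_j$ is empty, a single vertex, or a single edge; in the first two cases the spherical Deligne factor is empty or a single point and nothing happens under the quotient. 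The quotient $\Phi_\Gamma \to \Theta_{\Gamma(k)}$ restricts on each link to the join of the identity on the upper-link factor with the canonical map $\widehat\Phi_{\Lambda_j} \to \widehat\Theta_{\Lambda_j(k)}$ induced by $A_{\Lambda_j} \twoheadrightarrow \sh_{\Lambda_j(k)}$.

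Writing $\xi_j^\pm = (\theta_j^\pm,u_j^\pm,v_j^\pm)$ in spherical-join coordinates with $u_j^\pm \in lk(v_{\Lambda_j},F_{\Lambda_j})$ and $v_j^\pm \in \widehat\Phi_{\Lambda_j}$: for each $j$ with $\Lambda_j$ an edge, apply Lemma \ref{lem:dihedralartinquotient} to the pair $(v_j^-,v_j^+)$ to produce $N_j \ge 1$ so that every quotient $\widehat\Phi_{\Lambda_j} \to \widehat\Theta_{\Lambda_j(cN_j)}$ ($c \ge 1$) preserves their distance; set $N_j = 1$ otherwise. Let $k = \lcm\{N_1,\dots,N_n\}$. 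Then $d(v_j^-,v_j^+)$ is preserved in $\widehat\Theta_{\Lambda_j(k)}$, the upper-link factor is unchanged, and the spherical-join distance formula (which depends only on the factor distances and the weights $\theta_j^\pm$) yields $d(\overline\xi_j^-,\overline\xi_j^+) = d(\xi_j^-,\xi_j^+) \ge \pi$ for each $j$. Since the quotient map is a local isometry on each open cell (cell stabilizers fix their cells pointwise), $\overline\gamma$ remains straight inside cells and satisfies the link condition at every vertex; as $\Theta_{\Gamma(k)}$ is $\cat(0)$ by Theorem \ref{thm:girth}, this means $\overline\gamma$ is a geodesic of the same length as $\gamma$.

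The main obstacle is the join-decomposition step: verifying cleanly that the quotient $\Phi_\Gamma \to \Theta_{\Gamma(k)}$ respects the spherical-join structure on each vertex link and acts as the identity on the upper-link factor, so that preservation of the $\pi$-lower bound in the link reduces to preservation of finitely many distances in the edge spherical Deligne complexes. Once this is in hand, Lemma \ref{lem:dihedralartinquotient} supplies exactly what is needed and the rest is a routine combination of the local-geodesic characterization with the $\cat(0)$-ness of $\Theta_{\Gamma(k)}$.
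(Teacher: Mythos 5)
Your overall strategy (verify the local geodesic condition pointwise, then invoke $\cat(0)$-ness of $\Theta_{\Gamma(k)}$ to upgrade to a global geodesic) is the same as the paper's, and your use of Lemma \ref{lem:dihedralartinquotient} at vertices of type $[gA_e]$ is exactly right. But there is a genuine gap: you only impose the link condition at vertices whose lower link is an edge-type spherical Deligne complex, and you dismiss the other local pictures too quickly. Two of them are \emph{not} harmless. First, at a vertex $x = [gA_{\{s\}}]$ the factor $\widehat\Phi_{\{s\}}$ is not ``a single point'': its vertices are the cosets of $A_\varnothing = \{e\}$ in $A_{\{s\}} = \langle s\rangle$, so it is an infinite discrete set in bijection with $\bb{Z}$, and the quotient collapses it to $\bb{Z}/k\bb{Z}$. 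If the two directions of $\gamma$ at $x$ land in this factor at positions $s^n$ and $s^m$, they are identified whenever $k \mid (m-n)$, destroying the link condition. Second, the local geodesic criterion must be checked at \emph{every} point of $\gamma$, not just at vertices; the nontrivial case is a point in the interior of an edge $E = [a_1A_{\{s\}}, a_2A_e]$ that $\gamma$ crosses transversally. The stabilizer of $E$ is an infinite cyclic conjugate of $\langle s\rangle$ acting transitively on the 2-simplices containing $E$, so the two triangles $\Delta_1,\Delta_2$ that $\gamma$ passes through satisfy $\Delta_2 = (g^{-1}s^{n_x}g)\Delta_1$ with $n_x \neq 0$; under $\rho_k$ they are identified exactly when $k \mid n_x$, in which case $\rho_k(\gamma)$ folds back at that point and is not locally geodesic. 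Your appeal to ``the quotient is a local isometry on each open cell'' does not see this, because the failure happens along the shared boundary edge, not inside an open 2-cell.

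The repair is what the paper does: at each such point record the nonzero integer $n_x$ and require $k \nmid n_x$ (e.g.\ $k > |n_x|$), in addition to requiring $k$ to be a multiple of the $N_j$ supplied by Lemma \ref{lem:dihedralartinquotient} at the edge-type vertices. Since $\gamma$ is compact it meets only finitely many cells, so only finitely many constraints arise and a suitable $k$ exists (for instance, a sufficiently large common multiple of all the $N_j$ and $|n_x|$). With that added, your argument goes through; the join-decomposition step you flag as the ``main obstacle'' is indeed fine, since the quotient is induced by $A_\Gamma \twoheadrightarrow \sh_{\Gamma(k)}$, preserves the strict fundamental domain $K_\Gamma$ cell by cell, and hence respects the decomposition $lk(v_\Lambda) \cong lk(v_\Lambda,F_\Lambda) \ast \widehat\Phi_\Lambda$ while acting as the identity on the first factor.
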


\begin{proof}
    For $k \geq 2$, let $\rho_k : \Phi_\Gamma \to \Theta_{\Gamma(k)}$ denote the usual quotient map induced by the quotient $q_k : A_\Gamma \to \sh_{\Gamma(k)}$. 
    Let $x$ be a point in the interior of $\gamma$. 
    
    If $x$ is in the interior of a 2-simplex of $\Phi_\Gamma$, then $\rho_k(\gamma)$ is locally geodesic at $\rho_k(x)$ for any value of $k$, since $\rho_p$ maps 2-simplices isometrically to 2-simplices. In this case, define $n_x = 2$.
    
    If there is a neighborhood of $x$ in $\gamma$ which is contained in an edge, then there is nothing to show since edges are mapped isometrically to edges (in other words, $\rho_k(\gamma)$ is locally geodesic at $\rho_k(x)$ for any value of $k$). In this case, as above, define $n_x = 2$.
    
    Suppose $x$ is a point contained in the interior of an edge $E$ of $\Phi_\Gamma$ with no neighborhood of $x$ in $\gamma$ also contained in this edge. 
    First, suppose $E = [a_1A_\varnothing, a_2A_s]$ for a vertex $s$ of $\Gamma$ or $E = [a_1A_\varnothing, a_2A_e]$ for an edge $e$ of $\Gamma$. Then the stabilizer of $E$ is trivial, and in particular, the link of $E$ is mapped isomorphically to the link of $\rho_k(E)$ for any $k$.
    So suppose
    $E = [a_1 A_s, a_2 A_e]$ for a vertex $s$ of $\Gamma$ and an edge $e$ of $\Gamma$ containing $s$. 
    Let $\Delta_1$ and $\Delta_2$ denote the distinct 2-simplices containing $E$ whose interiors nontrivially intersect $\Gamma$.
    The stabilizer of $E$ is conjugate to $\langle s \rangle$ and acts transitively on the link of $E$, so there is some $g \in A_\Gamma$ and some $n_x \in \bb{Z}$ such that $\Delta_2 = (g^{-1} s^{n_x} g) \Delta_1$.  
    (Since $\gamma$ is a geodesic, $n_x \not= 0$.) For $k > |n_x|$,
    $\rho_k(\Delta_1)$ and $\rho_k(\Delta_2)$ are the 2-simplices of $\Theta_{\Gamma(k)}$ containing $\rho_k(E)$ (and $\rho_k(x)$) whose interiors intersect $\rho_k(\gamma)$. 
    Moreover, $\rho_k(\Delta_2) = q_k(g^{-1} s^{n_x} g) \rho_k(\Delta_1)$. 
    If $\rho_k(\Delta_2) = \rho_k(\Delta_1)$, then $q_k(g^{-1} s^{n_x} g) = e$, but this happens if and only if $q_k(s^{n_x}) = e$. 
    Since $k > |n_x|$, this can't happen, so $\rho_k(\Delta_2) \not= \rho_k(\Delta_1)$ and $\rho_k(\gamma)$ is locally geodesic at $\rho_k(x)$ for all $k > |n_x|$.

    Last, suppose $x$ is a vertex of $\Phi_\Gamma$. Then $x = [g A_\Lambda]$ for some $g \in A_\Gamma$ and some $\Lambda \in \c S^f$. There are three subcases to consider.
    
    First, assume $\Lambda = \varnothing$. Then for any $k \geq 2$, $\rho_k(x) = [q_k(g) \sh_\varnothing]$, and $\rho_k$ induces an isometry of the link of $x$ and the link of $\rho_k(x)$ (both being isometric to $lk(v_\varnothing, F_\varnothing)$).
    
    Next, assume $\Lambda = \{s\}$ for a vertex $s$ of $\Gamma$. The link of $x$ is the join of $lk(v_s,F_s)$ and $\widehat \Phi_{\{s\}}$, the latter of which is in bijection with $\langle s \rangle \cong \bb{Z}$. 
    The intersection of the $\varepsilon$-sphere of $x$ with $\gamma$ induces two points $y,z \in lk(x)$ of distance $\geq \pi$ apart. 
    Every point of $\widehat \Phi_{\{s\}}$ is distance $\pi/2$ from every point of $lk(v_s,F_s)$ (by the definition of spherical join), so either $y,z \in lk(v_s,F_s)$ or $y,z \in \widehat \Phi_{\{s\}}$%
    .
    If $y,z \in lk(v_s,F_s)$, then as in the previous case $\rho_k$ acts isometrically on $lk(v_s,F_s)$ for any $k\geq 2$, so $y$ and $z$ remain at the same distance under $\rho_k$. 
    If $y,z \in \widehat \Phi_{\{s\}}$, then they can be represented by some powers of $s$, say $s^n$ and $s^m$ with $n \not= m \in \bb{Z}$.
    (Since $\gamma$ is a geodesic, these points are distinct.) 
    By translating we may assume these points are in fact $e$ ($= s^0$) and $s^{n_x}$ for $n_x = m - n \not= 0$. 
    Choosing $k > |n_x|$,  $\rho_k$ acts by the standard quotient $\bb{Z} \to \bb{Z}/k\bb{Z}$ and thus these points remain distinct. In other words, for $k > |n_x|$, $\rho_k(\gamma)$ is locally geodesic at $\rho_k(x)$. 
    
    Finally, suppose $\Lambda$ is an edge with finite label $m$. Then, as before, the intersection of $\gamma$ with the $\varepsilon$-sphere of $x$ induces two points $y,z \in lk(x) = \widehat \Phi_\Lambda$ of distance $\geq \pi$. Lemma \ref{lem:dihedralartinquotient} implies that there is some $n_x \geq 1$ so that the distance between $y$ and $z$ remains $\geq \pi$ in the quotient via $\rho_k$ for all $k$ which are (positive) multiples of $n_x$. In other words, for all such $k$, $\rho_k(\gamma)$ is locally geodesic at $\rho_k(x)$. 
    
    Let $k \geq 2$ be a common multiple of $\{\,|n_x| : x \in \mathrm{int}(\gamma) \,\}$. (This set is finite and consists of positive integers, so $k$ exists.) Our above arguments show that $\rho_k(\gamma)$ is locally geodesic at $\rho_k(x)$ for all $x$, or in other words, that $\rho_k(\gamma)$ is a local geodesic in $\Theta_{\Gamma(k)}$. Since $\Theta_{\Gamma(k)}$ is $\cat(0)$, this implies $\rho_k(\gamma)$ is a geodesic and has the same length as $\gamma$.
\end{proof}

Now we may complete the proof of %

\artinresiduallyfinite*

\begin{proof} %
    Let $g \in A_\Gamma \setminus \{e\}$. Let $\gamma$ be the geodesic in $\Phi_\Gamma$ from $[A_\varnothing]$ to $[gA_\varnothing]$. By Lemma \ref{lem:keyartresid}, there is some $k$ so that the image $\overline \gamma$ of $\gamma$  under the quotient map to $\Theta_{\Gamma(k)}$ is a geodesic. If $\overline g$ denotes the image of $g$ under the quotient $A_\Gamma \to \sh_{\Gamma(k)}$, then $\overline \gamma$ is a geodesic from $[\sh_\varnothing]$ to $[\overline g \sh_\varnothing]$. Since $\Theta_{\Gamma(k)}$ is $\cat(0)$, this means $\overline g \not= e$. But $\sh_{\Gamma(k)}$ is residually finite (Corollary \ref{cor:residuallyfinite}), so there is a further quotient $\sh_{\Gamma(k)} \to F$ to a finite group $F$ under which $\overline g$ remains nontrivial. Composing these maps gives a quotient $A_\Gamma \to F$ to a finite group where the image of $g$ is nontrivial.
\end{proof}

\bibliographystyle{alpha}
\bibliography{2dshep}

\end{document}